\newcommand{\Z}{\mathbb{Z}}
\newcommand{\Zq}{\mathbb{Z}_{q}}
\newcommand{\Zqn}{\mathbb{Z}_{q}^{n}}
\newcommand{\R}{\mathbb{R}}
\newcommand{\rmv}[1]{}
\newcommand{\simG}{\underset{\Gamma}{\sim}}
\newcommand{\simg}{\underset{\mathcal{G}}{\sim}}
\newcommand{\sima}{\underset{\mathcal{A}}{\sim}}
\newcommand{\tpitchfork}{%
  \vbox{
    \baselineskip\z@skip
    \lineskip-.52ex
    \lineskiplimit\maxdimen
    \m@th
    \ialign{##\crcr\hidewidth\smash{$-$}\hidewidth\crcr$\pitchfork$\crcr}
  }%
}
\newtheorem{theorem}{Theorem}[section]
\newtheorem{lemma}[theorem]{Lemma}
\newtheorem{proposition}[theorem]{Proposition}
\newtheorem{corollary}[theorem]{Corollary}
\theoremstyle{definition}
\newtheorem{definition}[theorem]{Definition}
\newtheorem{remark}[theorem]{Remark}
\newtheorem{example}[theorem]{Example}
\newtheorem{notation}[theorem]{Notation}
\begin{document}

\title[On cube tilings of tori and classification of perfect codes in the maximum metric]{On cube tilings of tori and classification of perfect codes in the maximum metric}


\author{Claudio Qureshi}
\thanks{Work partially supported by FAPESP grants 2012/10600-2 and 2013/25977-7 and by CNPq grants 312926/2013-8 and 158670/2015-9.}
\address{Institute of Mathematics\\
State University of Campinas, Brazil}
\email{cqureshi@gmail.com}

\author{Sueli I.R. Costa}
\address{Institute of Mathematics\\
State University of Campinas, Brazil}
\email{sueli@ime.unicamp.br}

\maketitle

\begin{abstract}
We describe odd-length-cube tilings of the $n$-dimensional $q$-ary torus 
what includes $q$-periodic integer lattice tilings of $\mathbb{R}^{n}$. In the language of coding theory these tilings correspond to perfect codes with respect to the maximum metric. A complete characterization of the two-dimensional tilings is presented and a description of general matrices, isometry and isomorphism classes is provided in the linear case. Several methods to construct perfect codes from codes of smaller dimension or via sections are derived. We introduce a special type of matrices (perfect matrices) which are in correspondence with generator matrices for linear perfect codes in arbitrary dimensions. For maximal perfect codes, a parametrization obtained allows to describe isomorphism classes of such codes. We also approach the problem of what isomorphism classes of abelian groups can be represented by $q$-ary $n$-dimensional perfect codes of a given cardinality $N$.

\end{abstract}
\vspace{3mm}
{\scriptsize \textsc KEYWORDS } 
\keywords{ \footnotesize Cube tiling, lattice tiling, perfect codes, maximum metric,\\ Minkowski's conjecture}\\

{\scriptsize \textsc MATHEMATICS SUBJECT CLASSIFICATION}
\subjclass{ \footnotesize 52C17, 52C22, 94B05, 94B27, 11H31}


\section{Introduction}

Despite the fact that cubes are among the simplest and important objects in Euclidean geometry, many interesting problems arise related to them. Sometimes complicated machineries from different areas of mathematics have had to be employed to solve some of the known results related to cubes and many basic problems still remain open, as it is pointed out in the survey \cite{Zong} on what is known about cubes. We focus here on a particular problem: lattice cube tilings. In 1906, an interesting conjecture was proposed by Minkowski 
\cite{Minkowski} while he was considering a problem in diophantine approximation. The Minkowski's conjecture states that in every tiling of $\R^n$ by cubes of the same length, where the centers of the cubes form a lattice, there exist two cubes that meet at a $(n-1)$-dimensional face. This conjecture was proved in 1942 by Haj\'{o}s \cite{Hajos}. A similar conjecture was proposed by Keller \cite{Keller} removing the restriction that the centers of the cubes have to form a lattice, however this more stronger version was shown to be true in dimensions $n\leq 6$ \cite{Perron}, false in dimensions $n\geq 8$ \cite{LS, Mackey} and it remains open in dimension 
$n=7$. A lot of variants and problems related to cube tilings have been considered \cite{KP1, KP2, KP3, SI, SIP} as well as application to other areas such as combinatorics, graph theory \cite{CS2}, coding theory \cite{LS2}, algebra \cite{Szabo}, harmonic analysis \cite{Kolountzakis} among others.\\


In this work we fix a triplet $(n,e,q)$ of positive integers and consider all the cube tilings of length $2e+1$ whose centers form an integer $q$-periodic lattice $\Lambda$ (i.e. an additive subgroup of $\R^n$ such that $q\Z^n \subseteq \Lambda \subseteq \Z^n$) and we classify these tilings with respect to the quotient group $\Lambda/q\Z^n$. Periodic cube tilings of $\R^n$ are in natural correspondence with cube tilings of the flat torus $\R^n/q\Z^n$, and the set $\Lambda/q\Z^n$ corresponds to the centers of the projected cubes in this torus. We consider the problem of describing all the quotient groups $\Lambda/ q\Z^n$ and classify them up to isometries and up to isomorphism. We also approach the problem of what group isomorphism classes are represented by these groups, obtaining structural information about these classes. One of our main motivation to consider these problems comes from coding theory, which gives us a natural framework as it is used in \cite{LS2}. The maximum (or Chebyshev) metric have been used in the context of coding theory and telecommunication mostly over permutation codes (rank modulation codes), some references include \cite{KLTT10,ST10,TS10}. Every $n$-dimensional $q$-ary linear code (i.e. a subgroup of $\Zq^n$) with respect to the maximum metric (induced from the maximum metric in $\Z^n$) is in correspondence with a $q$-periodic integer lattice via the so called Construction A \cite{CS}. This association has the property that each $e$-perfect code (i.e. a code in which every element of $\Z_q^n$ belong to exact one ball of radii $e$ centered at a codeword) corresponds to a cube tiling of length $2e+1$ of $\Z^n$, in such a way that its centers form a $q$-periodic integer lattice. We use the terminology of coding theory from some standard references such as \cite{MS, Lint} and \cite{CS}. Notation and results from coding theory and lattices, used in this paper, are included in Section 2. In Section 3 we study two-dimensional perfect codes, characterizing them (Corollary \ref{2DisC1orC2} and Theorem \ref{GeneratorForLPL2D}), describing what are the group isomorphism classes represented by the linear codes (Theorem \ref{Structure2DTh}) and providing a parametrization of such codes by a ring in such a way that isometry classes and isomorphism classes correspond with certain generalized cosets (Theorem \ref{parametrizationTheorem}). In Section 4 several constructions of perfect codes are derived, such as the linear-construction (Proposition \ref{LinearConstrProp}) and via sections (Proposition \ref{SectionConstrProp}) which allow us to describe some interesting families of perfect codes as those in Corollaries \ref{CyclicFamilyCor} and \ref{CyclicFamily2Cor} and to generalize some results from dimension two to arbitrary dimensions. In Section 5 we introduce a special type of matrices (perfect matrices) which characterize the generator matrices of perfect codes (Proposition \ref{PerfectGMProp}). For maximal codes (Definition \ref{MaximalDef}) a parametrization is given (Theorem \ref{GralParamTheoremP1}) and the induced parametrization of their isomorphism classes (Theorem \ref{GralParamTheoremP3}) as well as the number of isomorphism classes expressed in terms of certain generating function (Corollary \ref{NumberofIsomClassesCor}) are derived. We also describe the group isomorphism classes that can be represented by an $n$-dimensional $q$-ary code with packing radius $e$ (admissible structures) for the maximal case (Theorem \ref{AdmissibleTheorem}). Finally, in Section 6 we list further interesting research problems.

Some partial preliminary results stated in Sections 3 and 4 of this paper were presented in \cite{QC15}.

\section{Preliminaries, definitions and notations}

In this section we summarize results and notations which will be used in this paper. We use the language of coding theory to approach the geometric problem of tiling a flat torus by cubes.\\

Let $\Z_q=\{0,1,\ldots,q-1\}$ be the set of integers modulo $q$. Associated with $\Z_q$ we have a (non-directed) circular graph whose vertices are the elements of $\Z_q$ and the edges are given by $\{x,x+1\}$ for $x \in \Z_q$. The distance $d$ with respect to this graph is given by $d(x,y)=\min\{|x-y|,q-|x-y|\}$. For $p\in [1,\infty]$ and $x=(x_1,\ldots, x_n), y=(y_1,\ldots,y_n)\in\Zqn$, the $p$-Lee metric in $\Z_q^n$ is given by $d_p(x,y)= \left\{ \begin{array}{ll} \sqrt[p]{\sum_{i=1}^n d(x_i,y_i)^p} & \textrm{if } p \in [1,\infty), \\
\max_{i=1}^n d(x_i,y_i) & \textrm{if } p=\infty,
\end{array}   \right.$. We denote by $|x|_{p}=d_{p}(x,0)$ for $x \in \Z_q^n$. The case $p=1$ known as Lee metric is, apart from the Hamming metric, one of the most often used metric in coding theory due to several applications \cite{BBV,EY,RS,Schmidt}. Also of interest in this field are the cases $p=2$ (Euclidean distance) and $p=\infty$ (maximum or Chebyshev metric) which is our focus in this paper because of its association with cube packings. We denote the maximum metric simply by $d(x,y)$ and $B(x,r)$ is the ball centered at the point $x\in\Z_q^n$ and radius $r \in \mathbb{N}$ respect to this metric. \\

An $n$-dimensional $q$-ary code is a subset $C$ of $\mathbb{Z}_q^n$ and we assume here $\#C>1$. We refer to elements of $C$ as codewords and say that $C$ is a linear code when it is a subgroup of $(\mathbb{Z}_{q}^n, +)$. The minimum distance of $C$ is given by $\mbox{dist}(C)=\min\{d(x,y): x,y \in C, x\neq y\}$ and if $C$ is linear it is also given by $\mbox{dist}(C)=\min\{|x|_{\infty}: x \in C, x \neq 0\}$. The packing radius of $C$ is the greatest non-negative integer $e=e(C)$ (sometimes denoted by $r_p(C)$) such that the balls $B(c,e)$ are disjoint where $c$ runs over the codewords. An $n$-dimensional $q$-ary code with packing radius $e$ is called an $(n,e,q)$-code. The covering radius is the least positive integer $r_c=r_c(C)$ such that $\Z_{q}^n = \bigcup_{c\in C}B(c,r_c)$. A code $C$ is perfect when $r_p(C)=r_c(C)=e$, in this case we have a partition of the space into disjoint balls of the same radius $e$, that is $\Z_{q}^n = \biguplus_{c\in C}B(c,e)$. For perfect codes in the maximum metric we have $\mbox{dist}(C)=2e+1$. If $C \subseteq \mathbb{Z}_q^{n}$ is a perfect code with packing radius $e$ we have a map $f_C:\mathbb{Z}_{q}^n \rightarrow C$, called the correcting-error function of $C$, with the property $d(x,f_C(x))\leq e$ for $x\in \mathbb{Z}_q^n$. The set of all perfect $(n,e,q)$-codes is denoted by $PL^{\infty}(n,e,q)$ and its subset of linear codes is denoted by $LPL^{\infty}(n,e,q)$. The sphere packing condition states that for all $C \in PL^{\infty}(n,e,q)$ we have $\#C \times (2e+1)^n = q^n$ which implies that $q=(2e+1)t$ for some $t \in \mathbb{Z}^{+}$ and $\#C= t^n$. Conversely, if $q=(2e+1)t$ with $e,t \in \Z^{+}$ the code $C=(2e+1)\mathbb{Z}_q^{n}$ is an $(n,e,q)$-perfect code (called the cartesian code), so $LPL^{\infty}(n,e,q)\neq \emptyset$. From now on, we assume that $q=(2e+1)t$ where $e,t \in \Z^{+}$.\\


An $n$-dimensional lattice $\Lambda$ is a subset of $\R^n$ of the form $\Lambda = \nu_1 \Z + \nu_2 \Z + \ldots + \nu_n \Z$ where $\{\nu_1,\ldots,\nu_n\}$ is a basis of $\R^n$ (as $\R$-vector space). There is a strong connection between $n$-dimensional linear codes over $\Z_q$ and $q$-periodic integer lattices in $\R^n$. Associated with a linear $(n,e,q)$-code we have the lattice $\Lambda_{C}=\pi^{-1}(C)$ where $\pi: \Z^n \rightarrow \Z_q^n$ is the canonical projection (taking modulo q in each coordinate). This way of obtaining a lattice from a code is known as Construction A \cite{CS}. On the other hand, if $\Lambda \subseteq \R^n$ is a $q$-periodic integer lattice the image $\pi(\Lambda)$ is clearly an $n$-dimensional $q$-ary linear code and in fact, the map $\pi$ induces a correspondence between both sets. Considering the maximum metric in $\R^n$, the packing radius, covering radius and perfection for lattices with respect to this metric can be defined in an analogous way to how it was done for codes. In this sense, the above correspondence establish a bijection between $(n,e,q)$-perfect codes and $q$-periodic integer perfect lattices. We remark that the preservation of perfection by Construction A does not hold for $p$-Lee metrics in general. For example when $p=1$, the existence of an $e$-perfect $q$-ary code with respect to the Lee metric does not assure that the associated lattice is $q$-perfect if $q<2e+1$ (for example consider the binary code $C=\{(0,0,0),(1,1,1)\}$).\\

The above correspondence between codes and lattices allows us to use the machinery of lattices to approach problems in coding theory, specially those related with perfect codes. Next we introduce further notations.


\begin{notation} Let $A$ be a ring (in particular a $\Z$-module) and $a \in A$.\\
$\bullet$ $\mathcal{M}_{m\times n}(A)$ denotes the set of rectangular matrices $m\times n$ with coefficients in $A$. We identify $A^n$ with $\mathcal{M}_{1\times n}(A)$ and when $m=n$, we set $\mathcal{M}_n(A)=\mathcal{M}_{n\times n}(A)$.\\
$\bullet$ $\nabla_n(A)$ denotes the set of upper triangular matrices in $\mathcal{M}_{n}(A)$ and $\nabla_n(a,A)$ is the subset of $\nabla_n(A)$ whose elements in the principal diagonal are all equals to $a$. For $A=\mathbb{Z}$, we set $\nabla_n(a)=\nabla_n(a, \Z)$.\\
$\bullet$  For $x \in \mathbb{Z}^n$ we denote by $\overline{x}=x+q\Z^n \in \Z_q^n$. If $M \in \mathcal{M}_n(\Z)$ we denote by $\overline{M}$ the matrix obtained from $M$ taking modulo $q$ in each coordinate.\\
$\bullet$ Let $M \in \mathcal{M}_{m\times n}(A)$ whose rows are $M_1,\ldots,M_m$.
We denote by $\mbox{span}(M)=M_1 \Z + \ldots + M_m \Z$. In other words $\mbox{span}(M)$ is the subgroup of $A^n$ generated by the rows of $M$. \\
$\bullet$ We denote by $[n]=\{1,2,\ldots,n\}$ and by $\{e_i: i \in [n]\}$ the standard basis of $\R^n$ (or in general $e_i$ denotes the element of $A^n$ with a $1$ in the i-st coordinate and $0$ elsewhere).\\
$\bullet$ We denote by $S_{n}$ the group of bijection of the set $[n]$ (permutations).
\end{notation}

A standard way to define a code is from a generator matrix. For convenience, we will modify slightly the definition of generator matrix for a $q$-ary code.

\begin{definition}
Let $C$ be a $(n,e,q)$-code and $\Lambda_{C}$ be its associated lattice via Construction A. A {\sf generator matrix} for $C$ is a matrix $M \in \mathcal{M}_{n}(\Z)$ whose rows form a $\Z$-basis for the lattice $\Lambda_{C}$.
\end{definition}


\begin{remark}
Let $M$ be a generator matrix for a $q$-ary code $C$ and $\overline{M}$ the matrix obtained from $M$ taking modulo $q$ in each coordinate. Clearly the rows of $\overline{M}$ form a generating set for $C$ (as $\Z$-module), but the converse is false if $q\Z^n \not\subseteq \mbox{span}(M)$. In fact, if $M$ is a matrix such that $\overline{M}$ generates $C$ as $\Z$-module, we have that $M$ is a generator matrix for $C$ if and only if the matrix $qM^{-1}$ has integer coefficients.
\end{remark}


In addition to the problem of describing the set $LPL^{\infty}(n,e,q)$ we are also interested in describing isometry classes and isomorphism classes of such codes. We consider here only linear isometries of $\Zq^n$ (i.e. homomorphisms $f:\Zq^n \rightarrow \Zq^n$ which preserve the maximum norm). It is easy to see that for $q>3$, the isometry group $\mathcal{G}$ of $\Zq^n$ is given by $\mathcal{G}=\{\theta \eta_{a}: \theta \in S_n, a \in \Z_2^n\}$ where for $\theta(e_i)=e_{\theta(i)}$ and $\eta_a(e_i)=(-1)^{a_i}e_i$ for $1\leq i \leq n$. For $q\leq 3$ every monomorphism is an isometry.



\begin{notation}
Let $\phi: \mathcal{G} \times LPL^{\infty}(n,e,q) \rightarrow LPL^{\infty}(n,e,q)$ the action given by $(f,C)\mapsto f(C)$. We denote the quotient space by this action as $LPL^{\infty}(n,e,q)/\mathcal{G}$ and the orbit of an element $C$ by $[C]_{\mathcal{G}}$. If $[C_1]_{\mathcal{G}}=[C_2]_{\mathcal{G}}$ we say that $C_1$ and $C_2$ are geometrically equivalent and we denote this relation by $C_1 \simg C_2$.
\end{notation}

In order to describe isomorphism classes of codes we consider the equivalence relation in $LPL^{\infty}(n,e,q)$: $C_1 \sima C_2$ if there exists an isomorphism $f: \Zqn \rightarrow \Zqn$ such that $C_1 = f(C_2)$. When $C_1 \sima C_2$ we say that $C_1$ and $C_2$ are algebraically equivalent and we denote by $LPL^{\infty}(n,e,q)/\mathcal{A}$ the quotient set and by $[C]_{\mathcal{A}}$ the equivalence class of $C$ by this relation. It is important to remark that if $M$ is a generator matrix for a code $C \in LPL^{\infty}(n,e,q)$, then the group structure of $C$ is determined by the Smith normal form of $A=qM^{-1}$. Moreover, if the Smith normal form of $A$ is $D=\mbox{diag}(a_1,a_2,\ldots, a_n)$ then $C \simeq \Z_{a_1}\times \Z_{a_2}\times \ldots \times \Z_{a_n}$ and $\#C = \det(A)=q^n/\det(M)$ (Proposition 3.1 of \cite{AC}).\\

In Section 3 we parametrize isomorphism classes of perfect codes through certain generalized cosets of $\Z_{d}$ defined as follow.

\begin{definition}
Let $A$ be an abelian ring with unit and $A^{*}$ the multiplicative group of its invertible elements. A generalized coset is a set of the form $xC$ where $C<A^{*}$ (i.e. C is a multiplicative subgroup of $A^*$). We denote by $A/C=\{xC: x \in A\}$.
\end{definition}

\begin{remark}
Let $C<A^*$ and $x,y \in A$. If $xC \cap yC \neq \emptyset$ then $xC=yC$, so $C$ induces an equivalence relation in $A$ whose quotient set coincides with $A/C$.
\end{remark}

The $n$-dimensional $q$-ary flat torus $\mathcal{T}_q^n$ is obtained from the cube $[0,q]^n$ by identifying its opposite faces, see Figure \ref{toros}.

\begin{figure}[h]
  \begin{center}
    \includegraphics[width=0.5\textwidth,natwidth=471,natheight=366]{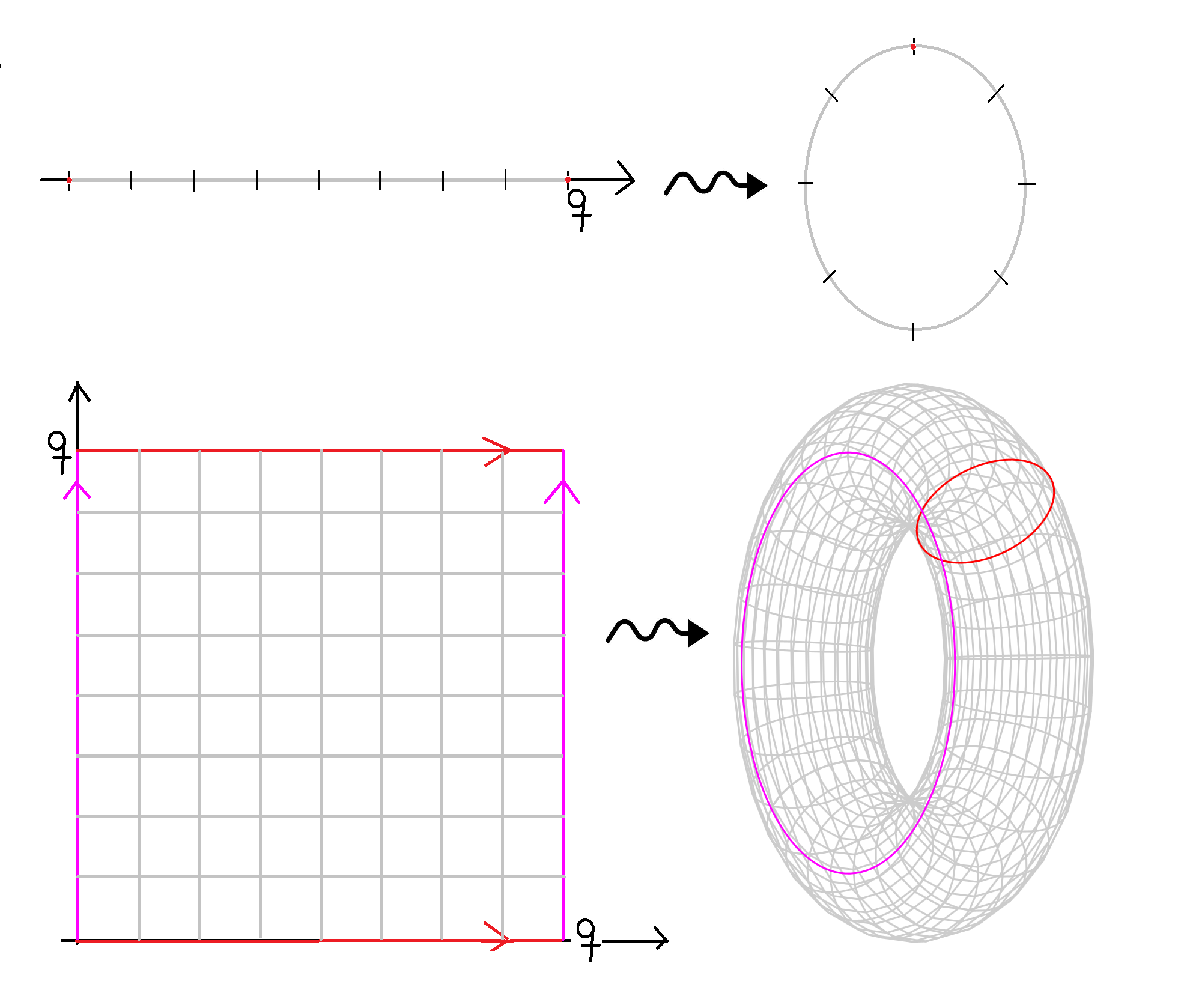}
  \caption{The torus in dimension one and two, obtained identifying opposite faces of the $n$-cube for $n=1,2$.}
  \label{toros}
  \end{center}
\end{figure}

It can also be obtained through the quotient $\mathcal{T}_{q}^n= \R^n / q \Z^n$, inheriting a natural group structure induced by this quotient. Given an invariant-by-translation metric $d$ in $\R^n$, every ball $B=B(0,r)$ in this metric have an associated polyomino $P_B \subseteq \mathbb{R}^n$ given by $$P_B= \biguplus_{x \in B\cap \Z^n} x+\left[-1/2,1/2\right]^n.$$ In this way, tiling $Z^n$ by translated copies of $B$ is equivalent to tiling $\mathbb{R}^n$ by translated copies of its associated polyomino $P_B$. When $P_B \subseteq \left[\frac{-q}{2},\frac{q}{2}\right]^n$, $q$-periodic tilings of $\mathbb{R}^n$ by translated copies of $P_B$ are in correspondence to tilings of $\mathcal{T}_{q}^n$ by translated copies of $\overline{P_B}$. This association provides an important geometric tool to study perfect codes over $\mathbb{Z}$. In the seminal paper of Golomb and Welch \cite{GW}, the authors use this approach based on polyominoes to settle several results on perfect Lee codes over large alphabets.\\ 

Since we consider the maximum metric, the polyominoes associated with balls in this metric correspond to cubes of odd length centered at points of $\mathbb{Z}^n$ (and only this type of cubes will be considered in this paper). The condition $q=(2e+1)t$ guarantees a correspondence between tiling of the torus $\mathcal{T}_q^n$ by cubes of length $2e+1$ and perfect codes in $LPL^{\infty}(n,e,q)$. 

\begin{definition}
An $n$-dimensional cartesian code is a code of the form $(2e+1)\mathbb{Z}_q^n$ for some $q\in \Z^{+}$ and $e \in \mathbb{N}$ such that $2e+1\mid q$. A linear $q$-ary code is a subgroup of $(\Z_q^n, +)$ (example in Figure \ref{ExCartesiano}). A cyclic $q$-ary code is a linear $q$-ary code which is cyclic as abelian group. The code $C=\Z_q^n$ is a perfect code and we refer to it as the trivial code.
\end{definition}

\begin{figure}[h]
  \begin{center}
    \includegraphics[width=0.4\textwidth,natwidth=471,natheight=366]{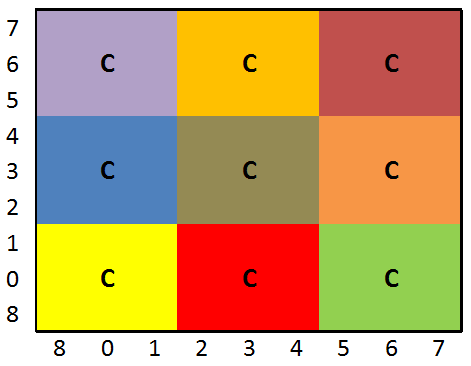}
  \caption{The cartesian code $3\mathbb{Z}_{9}^2 \in LPL^{\infty}(2,1,9)$ (codewords are marked with C)}
  \label{ExCartesiano}
  \end{center}
\end{figure}

\begin{definition}
We say that a code $C \in PL^{\infty}(n,e,q)$ is standard if there exists a canonical vector $e_i$ for some $i:1\leq i \leq n$ such that $C+(2e+1)e_i \subseteq C$. In this case we say also that $C$ is of type $i$, see Figure \ref{ExCiclico}.
\end{definition}

\begin{figure}[h]
  \begin{center}
    \includegraphics[width=0.4\textwidth,natwidth=477,natheight=363]{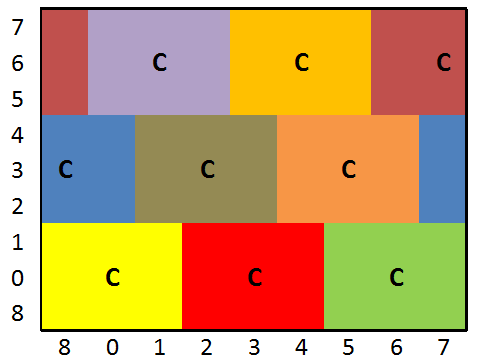}
  \caption{The cyclic perfect code $C=\langle (2,3) \rangle \in \mathbb{Z}_{9}^2 \in CPL^{\infty}(2,1,9)$ is a type $1$ code but is not a type $2$ code (codewords are marked with $C$).}
  \label{ExCiclico}
  \end{center}
\end{figure}

As we will see later (Remark \ref{NonStandardExampleRem}), a code could have no type or it could have more than one type (for example $n$-dimensional cartesian codes are of type $i$ for $1\leq i \leq n$).

The following theorem of Hajos \cite{Szabo}, known also as Minkowski's Conjecture, is of fundamental importance when we approach perfect codes in arbitrary dimensions.

\begin{theorem}[Minkowski-Hajos]\label{MinkowskiConjTh}
Every tiling of $\R^n$ by cubes of the same length whose centers form a lattice contains two cubes that meet in an $n-1$ dimensional face.
\end{theorem}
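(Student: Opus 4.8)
The plan is to translate the geometric statement into the group‑theoretic factorization theorem of Hajós and then indicate the inductive proof of the latter. Normalize so that all cubes have side $1$; their centers then form a lattice $\Lambda\subset\R^{n}$ with $\det\Lambda=1$, and the hypothesis that the cubes $c+\left[-\tfrac12,\tfrac12\right]^{n}$ ($c\in\Lambda$) tile $\R^{n}$ is equivalent to $\Lambda\cap(-1,1)^{n}=\{0\}$. Two of these cubes share an $(n-1)$‑dimensional face precisely when $\Lambda$ contains some $\pm e_{i}$; hence, since the combinatorics of the tiling is $\Lambda$‑periodic, it suffices to show that $e_{i}\in\Lambda$ for some $i$.

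The first step is Minkowski's classical reduction: by inspecting the tiling along the lines parallel to the coordinate axes one produces, after a permutation of coordinates, a $\Z$‑basis $v_{1},\dots,v_{n}$ of $\Lambda$ whose matrix has the form $I+N$ with $N$ of zero diagonal; with such a basis it is enough to find an index $i$ with $v_{i}=e_{i}$. A further reduction, due to Hajós, then allows the entries of $N$ to be taken rational; writing them over a common denominator $q$ and scaling by $q$ turns the picture into a tiling of $\R^{n}$ by cubes of integer side $q$ centered at the integral lattice $q\Lambda\subseteq\Z^{n}$ (of index $q^{n}$). Restricting to the integer points gives $\Z^{n}=q\Lambda\oplus\{0,1,\dots,q-1\}^{n}$, and projecting modulo $q\Lambda$ exhibits the finite abelian group $G:=\Z^{n}/q\Lambda$ as a \emph{factorization} $G=A_{1}\oplus\cdots\oplus A_{n}$ — every element of $G$ is written uniquely as $a_{1}+\cdots+a_{n}$ with $a_{i}\in A_{i}$ — where $A_{i}=\{0,g_{i},2g_{i},\dots,(q-1)g_{i}\}$ is the ``cyclic block'' generated by the image $g_{i}$ of $e_{i}$. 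Under this dictionary $e_{i}\in\Lambda$ is equivalent to $qg_{i}=0$, i.e.\ to $A_{i}$ being a subgroup of $G$.

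Thus everything reduces to \textbf{Hajós's factorization theorem}: if a finite abelian group $G$ is a direct sum of cyclic blocks $A_{1},\dots,A_{n}$, then at least one $A_{i}$ is a subgroup. This is proved by induction on $|G|$. One first normalizes the blocks: if $A_{i}=\{0,g_{i},\dots,(m_{i}-1)g_{i}\}$ has composite length $m_{i}$, split it into blocks of prime lengths, each generated by a suitable multiple $cg_{i}$ of $g_{i}$; since such a prime block being a subgroup means $p\,cg_{i}=0$, which (because $pc\mid m_{i}$) forces $m_{i}g_{i}=0$, the appearance of a subgroup among the refined blocks already implies that $A_{i}$ itself is a subgroup — so we may assume every block has prime length $p_{i}$, and then ``$A_{i}$ is a subgroup'' simply means ``$g_{i}$ has order $p_{i}$''. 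Next one reduces to the case that $|G|$ is a prime power, where the only real obstruction lies. The core is then the inductive step for $p$‑groups: assuming no $g_{i}$ has order $p_{i}$, one uses Hajós's substitution, replacing a carefully chosen generator $g_{i}$ by $g_{i}+h$ for a suitable $h$ so as to uncover a proper subgroup $H<G$ compatible with a modified factorization, and applies the induction hypothesis to $G/H$ (and to $H$) to produce a block that is a subgroup — a contradiction.

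The reductions in the second paragraph, although not entirely formal (Hajós's passage from real to rational lattices is itself a delicate point), are classical; the genuine obstacle is the $p$‑group induction in Hajós's theorem. There the substitution has to be chosen according to an intricate case analysis governed by the orders of the generators $g_{i}$, by which primes occur as the $p_{i}$, and by coincidences among the $p_{i}$, and it is exactly this combinatorial heart that made Minkowski's conjecture resist proof for decades. Since the argument is long and self‑contained, in what follows we use the theorem in the stated form, referring to \cite{Hajos} and the exposition in \cite{Szabo} for the details.
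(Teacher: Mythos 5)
The paper does not prove this theorem at all: it is invoked as a classical result of Haj\'os, with a pointer to \cite{Szabo}, and then used as a black box (e.g.\ to deduce Corollary \ref{MinkoskiConjStandardCor}). So there is no in-paper argument to compare yours against. Your outline is a faithful account of the standard proof strategy --- Minkowski's reduction of the tiling to a normalized lattice basis, Haj\'os's rationalization step, the translation of the rational tiling into a factorization $G=A_1\oplus\cdots\oplus A_n$ of the finite abelian group $G=\Z^n/q\Lambda$ by cyclic blocks, the observation that ``some cube pair shares a facet'' becomes ``some $A_i$ is a subgroup,'' and the reduction of Haj\'os's factorization theorem to prime-length blocks and then to $p$-groups --- and the individual translations you state (tiling $\Leftrightarrow$ $\Lambda\cap(-1,1)^n=\{0\}$ for $\det\Lambda=1$; facet-sharing $\Leftrightarrow$ $\pm e_i\in\Lambda$; $A_i$ a subgroup $\Leftrightarrow$ $qg_i=0$ $\Leftrightarrow$ $e_i\in\Lambda$) are all correct. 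But, as you yourself say, what you have written is an outline and not a proof: the two genuinely hard steps, the passage from real to rational lattices and above all the $p$-group induction via Haj\'os's substitution, are only named and then deferred to \cite{Hajos} and \cite{Szabo}. That is exactly the status the theorem has in this paper, so your text is appropriate as an expository remark accompanying the citation, but it should not be labelled a proof; if a self-contained proof were required, the case analysis in the substitution step would still have to be supplied.
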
 

\begin{corollary}\label{MinkoskiConjStandardCor}
Every linear perfect code $C \in LPL^{\infty}(n,e,q)$ is standard.
\end{corollary}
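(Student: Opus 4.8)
The plan is to translate the corollary into the geometric language provided by Construction A and then apply the Minkowski--Hajos theorem (Theorem \ref{MinkowskiConjTh}) directly. Let $C \in LPL^{\infty}(n,e,q)$ be a linear perfect code and let $\Lambda_C = \pi^{-1}(C) \subseteq \Z^n$ be its associated $q$-periodic integer lattice. Because $C$ is perfect in the maximum metric, the balls $B(c,e)$ centered at codewords partition $\Z_q^n$; lifting to $\R^n$ via the polyomino construction, the cubes $c + [-(e+\tfrac12), e+\tfrac12]^n$ for $c$ ranging over $\Lambda_C$ tile $\R^n$, and their centers form the lattice $\Lambda_C$. All cubes have the same (odd) length $2e+1$. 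Thus the hypotheses of Theorem \ref{MinkowskiConjTh} are met, and we conclude that two of these cubes meet in an $(n-1)$-dimensional face.

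The next step is to extract from this face-sharing condition the existence of a coordinate direction $i$ along which $C$ is closed under translation by $(2e+1)e_i$. If two cubes centered at $u, v \in \Lambda_C$ share an $(n-1)$-dimensional face, then $u$ and $v$ agree in all coordinates except one, say coordinate $i$, where they differ by exactly the side length: $v - u = \pm(2e+1)e_i$. Since $\Lambda_C$ is a lattice (an additive subgroup of $\Z^n$), the vector $w := (2e+1)e_i = \pm(v-u)$ belongs to $\Lambda_C$. Projecting modulo $q$, we get $\overline{w} = \overline{(2e+1)e_i} \in C$, and since $C$ is a subgroup of $\Z_q^n$ this gives $C + (2e+1)e_i \subseteq C$. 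Hence $C$ is standard (of type $i$), as required.

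The main technical point to verify carefully is the precise correspondence between the tiling-of-the-torus / polyomino picture and the lattice $\Lambda_C$: namely that perfection of $C$ in $\Z_q^n$ yields a genuine cube tiling of all of $\R^n$ (not merely of the torus $\mathcal{T}_q^n$) whose centers form the lattice $\Lambda_C$, and that the "meeting in an $(n-1)$-dimensional face" of two polyomino-cubes is equivalent to the center-difference being $\pm(2e+1)e_i$. Both facts are essentially built into the setup of Section 2 (the condition $q = (2e+1)t$ guarantees the polyomino $P_B$ fits inside $[-q/2,q/2]^n$, so torus tilings lift to $q$-periodic tilings of $\R^n$, and cubes of equal odd side length meet in a facet precisely when their centers differ by the side length along a single axis); I do not expect any serious obstacle beyond bookkeeping, since the substantive content — that face-sharing must occur at all — is exactly Theorem \ref{MinkowskiConjTh}.
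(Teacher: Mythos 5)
Your argument is correct and is exactly the intended derivation: the paper states this corollary without proof as an immediate consequence of Theorem \ref{MinkowskiConjTh} applied to the lattice cube tiling obtained from $\Lambda_C$ via Construction A, with the facet-sharing condition translating to $(2e+1)e_i \in \Lambda_C$ and hence, by linearity, to $C$ being of type $i$. No gaps; the reading of ``meet in an $(n-1)$-dimensional face'' as a full shared facet (so that the centers differ by exactly $\pm(2e+1)e_i$) is the standard form of the Minkowski--Haj\'os theorem.
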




\section{Perfect codes in dimension two}

In this section we study two-dimensional perfect codes with respect to the maximum metric. First we describe the set of all (non-necessarily linear) $(2,e,q)$-perfect codes and how they can be obtained from an one-dimensional perfect code using horizontal or vertical construction. In particular we obtain that every two-dimensional perfect code is standard, what is not true for higher dimensions. This result in dimension $2$ is mentioned with no proof in \cite{Kisielewicz}. The proof presented here illustrates well the coding theory approach to be used in further results. Then we focus on the linear case providing generator matrices for perfect codes and describing isometry classes and isomorphism classes of the $(2,e,q)$-perfect codes.\\ 

The following result (whose proof is straightforward) characterizes the parameters for which there exists a perfect code with these parameters. 

\begin{proposition}\label{ExistencePropDim2}
A necessary and sufficient condition for the existence of an $n$-dimensional $q$-ary $e$-perfect code in the maximum metric is that $q=(2e+1)t$ for some integer $t>1$. Moreover, if this condition is satisfied, there exists a code in $LPL^{\infty}(n,q,e)$.
\end{proposition}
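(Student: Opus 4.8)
The plan is to prove both implications of Proposition \ref{ExistencePropDim2} using only facts already established in the preliminaries, together with the standard sphere-packing count. Recall that the claim has two parts: (i) an $n$-dimensional $q$-ary $e$-perfect code in the maximum metric exists if and only if $q=(2e+1)t$ for some integer $t>1$; and (ii) under this hypothesis the code can be taken to be linear, i.e. $LPL^{\infty}(n,e,q)\neq\emptyset$.

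First I would dispose of the necessity direction. Suppose $C\in PL^{\infty}(n,e,q)$. Perfection means $\Z_q^n=\biguplus_{c\in C}B(c,e)$, and since the maximum-metric ball $B(c,e)$ is a translate of the cube $\{x\in\Z_q^n: |x_i|\le e\ \forall i\}$ — which has exactly $(2e+1)^n$ points provided $2e+1\le q$ (and if $2e+1>q$ a single ball already covers everything, contradicting $\#C>1$) — the sphere-packing identity gives $\#C\cdot(2e+1)^n=q^n$. Hence $(2e+1)^n\mid q^n$, which forces $2e+1\mid q$; write $q=(2e+1)t$. Then $\#C=t^n$, and since $\#C>1$ we get $t>1$. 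This is precisely the sphere-packing condition recorded in Section 2, so the necessity half is essentially a citation of that discussion.

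For the sufficiency direction, assume $q=(2e+1)t$ with $t>1$ an integer. Then the cartesian code $C=(2e+1)\Z_q^n$ is available: it is a subgroup of $(\Z_q^n,+)$, hence linear, and one checks directly that it is $e$-perfect. Indeed, every residue class mod $2e+1$ in $\Z_q$ contains a unique multiple of $2e+1$ within distance $e$ of it (using that the circular distance on $\Z_q$ restricted to this situation behaves like the ordinary distance because $2e+1\mid q$), so coordinatewise reduction shows each $x\in\Z_q^n$ lies in exactly one ball $B(c,e)$ with $c\in C$. This gives $\Z_q^n=\biguplus_{c\in C}B(c,e)$, so $C\in LPL^{\infty}(n,e,q)$, which simultaneously proves the ``Moreover'' clause. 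Again, this is the observation already made in Section 2 that $C=(2e+1)\Z_q^n$ is perfect, so I would simply assemble it cleanly here.

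The only point requiring the slightest care — and the place I would write out a sentence or two rather than wave hands — is the edge case in the necessity argument when $2e+1>q$: one must note that then $B(c,e)=\Z_q^n$ for every $c$, so two distinct codewords would violate disjointness of balls (packing radius $e$), contradicting $\#C>1$; thus necessarily $2e+1\le q$ and the ball-size count $(2e+1)^n$ is valid. Beyond that, there is no real obstacle: the proposition is, as the authors say, straightforward, and the whole argument is a matter of invoking the sphere-packing condition for one direction and exhibiting the cartesian code for the other. I would keep the write-up to a short paragraph, since all the ingredients are already present in the preliminaries.
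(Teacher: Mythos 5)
Your proposal is correct and matches the paper's (implicit) argument: the paper declares the proof straightforward, relying on exactly the two ingredients you use — the sphere-packing identity $\#C\cdot(2e+1)^n=q^n$ from Section 2 for necessity, and the cartesian code $(2e+1)\Z_q^n$ for sufficiency and linearity. Your extra sentence handling the degenerate case $2e+1>q$ is a reasonable bit of added care but does not change the route.
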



\begin{corollary}
There exists a non trivial perfect code over $\mathbb{Z}_q$ if and only if $q$ is neither a power of $2$ nor a prime number.
\end{corollary}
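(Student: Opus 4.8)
The plan is to deduce the corollary directly from Proposition~\ref{ExistencePropDim2}, which says that a non-trivial perfect $q$-ary code (in dimension $n\geq 1$) exists exactly when $q=(2e+1)t$ with $e\geq 1$ and $t>1$. Here ``non-trivial'' means $e\geq 1$ (so we exclude the whole space $\Z_q^n$, which is the $e=0$ code). So the content reduces to a purely arithmetic statement: a positive integer $q$ admits a factorization $q=(2e+1)t$ with $2e+1\geq 3$ odd and $t\geq 2$ if and only if $q$ is not a power of $2$ and not prime.

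First I would prove the forward direction by contraposition: if $q$ is a power of $2$, then its only odd divisor is $1$, so the only way to write $q=(2e+1)t$ is with $2e+1=1$, i.e. $e=0$, giving only the trivial code; and if $q$ is prime, then the divisors of $q$ are $1$ and $q$, so either $2e+1=1$ (trivial) or $2e+1=q$ and $t=1$, violating $t>1$. In either case no non-trivial perfect code exists.

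For the converse, suppose $q$ is neither a power of $2$ nor prime. Write $q=2^{a}m$ with $m$ odd. Since $q$ is not a power of $2$ we have $m>1$, so $m$ has an odd prime factor $p$; set $2e+1=p\geq 3$ (so $e\geq 1$) and $t=q/p$. It remains to check $t>1$: if $t=1$ then $q=p$ would be prime, contradicting the hypothesis. Hence $q=(2e+1)t$ with $e\geq 1$ and $t>1$, and Proposition~\ref{ExistencePropDim2} produces a code in $LPL^{\infty}(n,e,q)$, which is non-trivial since $2e+1>1$.

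There is essentially no obstacle here; the only point requiring a little care is bookkeeping about what ``non-trivial'' excludes (the $e=0$ case, i.e.\ $C=\Z_q^n$) and making sure the factor $t$ is genuinely $>1$ in the converse, which is exactly where the ``not prime'' hypothesis is used. One could phrase the whole argument in one or two sentences, but separating the two hypotheses ($q$ not a power of $2$ handles the existence of an odd divisor $\geq 3$; $q$ not prime handles $t>1$) makes the logic transparent.
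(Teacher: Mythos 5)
Your proof is correct and follows exactly the route the paper intends: the corollary is stated without proof immediately after Proposition~\ref{ExistencePropDim2}, and the content is precisely the elementary arithmetic you carry out (a factorization $q=(2e+1)t$ with $2e+1\geq 3$ odd and $t\geq 2$ exists iff $q$ is neither a power of $2$ nor prime). Your bookkeeping on ``non-trivial'' ($e\geq 1$, excluding $C=\mathbb{Z}_q^n$) is also consistent with the paper's definition of the trivial code.
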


These results led us restricting to the case $q=(2e+1)t$ where $e\geq 0$ and $t>1$ are integers and we will maintain this assumption along this paper.


\subsection{Linear and non-linear two-dimensional perfect codes}

It is immediate to see that the only perfect codes $C$ in $PL^{\infty}(1,e,q)$ are of the form $a+(2e+1)\mathbb{Z}_q$ where $q=(2e+1)t$. If we fix a map $h:\mathbb{Z}_t \rightarrow \mathbb{Z}_q$ we can construct a two-dimensional $q$-ary perfect code as follows:\\
\noindent $\bullet$ (Horizontal construction) $C_1(a,h)=\{(h(k)+(2e+1)s,a+(2e+1)k):k, s \in \mathbb{Z}_t \}$.\\
$\bullet$ (Vertical construction) $C_2(a,h)= \{(a+(2e+1)k,h(k)+(2e+1)s):k,s \in \mathbb{Z}_t \}$.\\

The above construction give us $(2,e,q)$-codes of cardinality $t^2$ and minimum distance $d\geq 2e+1$ from which it is easy to deduce perfection. In fact we obtain $(2,e,q)$-perfect codes of type $1$ (if horizontal construction is used) or of type $2$ (if vertical construction is used). Moreover, every two-dimensional perfect code can be obtained in this way as we will see next. 

\begin{lemma}\label{eiLemma}
Let $\pi_i: \mathbb{Z}_q^n \rightarrow \mathbb{Z}_q$ be the canonical projection (i.e. $\pi(x_1,\ldots,x_n)=x_i$), $C\in PL^{\infty}(n,e,q)$, $f_C$ be its error-correcting function and $x$ be an element of $\mathbb{Z}_q^n$. Then,
\begin{itemize}
\item if $f_C(x)\neq f_C(x-e_i)$ then $\pi_i \circ f_C(x) = \pi_i(x)+e\cdot e_i$.
\item If $f_C(x)\neq f_C(x+e_i)$ then $\pi_i \circ f_C(x) = \pi_i(x)-e\cdot e_i$.
\end{itemize}
\end{lemma}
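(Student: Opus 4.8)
The plan is to argue by contradiction using the ball-decomposition property of perfect codes and the one-dimensional structure of the maximum metric. Suppose $f_C(x) \neq f_C(x - e_i)$; write $c = f_C(x)$ and $c' = f_C(x - e_i)$, so $c \neq c'$. By perfection, the balls $B(c,e)$ and $B(c',e)$ are disjoint, and by the defining property of the error-correcting function we have $d(x,c) \leq e$ and $d(x - e_i, c') \leq e$. The idea is that, since $x$ and $x - e_i$ differ by a unit vector in coordinate $i$, the balls $B(c,e)$ and $B(c',e)$ are "adjacent" along coordinate $i$, and this forces both centers to sit at the extreme allowed offset in that coordinate. First I would record the elementary fact that in the circular metric on $\Z_q$ one has $d(x_i, \pi_i(c)) \leq e$ and $d(x_i - 1, \pi_i(c')) \leq e$ (here $x_i = \pi_i(x)$), while in every coordinate $j \neq i$ one has $d(x_j, \pi_j(c)) \leq e$ and $d(x_j, \pi_j(c')) \leq e$, so $d(\pi_j(c), \pi_j(c')) \leq 2e < 2e+1$.

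Next I would exploit the packing/covering count. Because $q = (2e+1)t$, in each coordinate the value $x_j$ lies in a circular interval of radius $e$ around exactly the right number of residues; the key observation is that the point $z$ obtained from $x$ by leaving all coordinates $j \neq i$ equal to $\pi_j(c)$ (which is within $e$ of both $x_j$ and the corresponding coordinate of $c'$) and setting the $i$-th coordinate appropriately must lie in $B(c,e)$ and be as far as possible toward $c'$. Concretely: if $\pi_i(c) \neq x_i + e$, then $d(x_i,\pi_i(c)) \leq e$ forces $\pi_i(c)$ into the circular window $\{x_i - e, \ldots, x_i + e - 1\}$, and then one checks that the point $x - e_i$ would already satisfy $d(x - e_i, c) \leq e$ in coordinate $i$ and $\leq e$ in all other coordinates, hence $x - e_i \in B(c,e)$. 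But $x - e_i \in B(c',e)$ as well (since $d(x - e_i,c') \leq e$), contradicting disjointness of the balls unless $c = c'$. Therefore $\pi_i(c) = x_i + e$, i.e. $\pi_i \circ f_C(x) = \pi_i(x) + e$, which is the first claim (with $e \cdot e_i$ understood coordinatewise as scalar $e$ in position $i$). The second claim follows by the symmetric argument, replacing $x - e_i$ by $x + e_i$ and the window $\{x_i - e,\ldots, x_i + e-1\}$ by $\{x_i - e + 1,\ldots, x_i + e\}$; equivalently one applies the first claim to the point $x + e_i$ after a sign change.

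The main obstacle I anticipate is handling the circular (mod $q$) arithmetic cleanly: one must make sure that the "window" of residues within distance $e$ of $x_i$ really is an interval of $2e+1$ consecutive residues (which uses $2e+1 \leq q$, guaranteed by $t > 1$), and that the case $\pi_i(c) = x_i + e$ versus $\pi_i(c) = x_i - e$ are the only two ways the ball $B(c,e)$ can fail to contain $x - e_i$ while still containing $x$. A careful bookkeeping lemma — essentially, that if $d(y,c)\le e$ for a point $y$ and $d(y - e_i, c) > e$ then $\pi_i(c) = \pi_i(y) + e$ in $\Z_q$ — isolates exactly this one-dimensional fact, and once it is in hand the rest is immediate. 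I would state and prove that one-line sublemma about the circular metric first, then apply it with $y = x$ and with $y = x + e_i$ to get the two bullets.
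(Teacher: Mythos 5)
Your argument is correct and takes essentially the same route as the paper: both reduce to the one\-dimensional fact that $d(\pi_i(x),\pi_i(c))\le e$ together with $d(\pi_i(x)-1,\pi_i(c))\ge e+1$ forces $\pi_i(c)-\pi_i(x)=e$ in $\mathbb{Z}_q$, where the second inequality comes from perfection ($x-e_i$ lies in exactly one ball, so it cannot lie in $B(c,e)$). The paper obtains the second bullet by applying the reflection isometry $x_i\mapsto -x_i$ rather than rerunning the window argument, but that is only a cosmetic difference.
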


\begin{proof}
Let $c=f_C(x)$, $x_i=\pi_i(x)$ and $c_i=\pi_i(c)$. Denoting by $d$ the Lee metric in $\mathbb{Z}_q$, $f_C(x)=c$ implies $M_i=\max\{d(x_j,c_j): 1\leq j \leq n, j\neq i\}\leq e$ and $d(x_i,c_i)\leq e$. Since $f_C(x-e_i)\neq c$ we have that {$d(x-e_i,c)=\max\{M_i, d(x_i-1,c_i)\}\geq e+1$} and therefore $d(x_i-1,c_i)\geq e+1$. We obtain the inequalities $\|(c_i-x_i)\|\leq e$ and $\|(c_i-x_i)-1\|\geq$ \mbox{$e+1$} which imply $c_i-x_i=e$. The other case can be obtained from this considering the isometry $\eta_i$ of $\mathbb{Z}_q^n$ given by $\eta_i(x_1,\ldots,x_i,\ldots,x_n)=(x_1,\ldots,-x_i,\ldots,x_n)$.
\end{proof}

\begin{lemma}\label{standardDim2}
If $C \in PL^{\infty}(2,e,q)$ verifies $(2e+1)\mathbb{Z}_q \times \{0\} \subseteq C$ then $C$ is a standard code of type $1$.
\end{lemma}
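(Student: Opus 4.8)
The plan is to show that $C$ is invariant under translation by $(2e+1)e_1 = (2e+1)(1,0)$, which is exactly the assertion that $C$ is standard of type $1$. Equivalently, writing $f_C$ for the error-correcting function, the goal is to prove $f_C(x + (2e+1)e_1) = f_C(x) + (2e+1)e_1$ for every $x \in \Z_q^2$, so that the full code $C$ (being the fibre $f_C^{-1}(\text{itself})$) is stable under this shift. I would set up a "walking" argument along each horizontal line $\{x_2 = b\}$: start from the known codewords on the line $x_2 = 0$ (where $(2e+1)\Z_q\times\{0\}\subseteq C$ gives complete control), and propagate the periodicity upward row by row.

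First I would record, using the sphere-packing count, that on each fixed horizontal line $\ell_b = \Z_q \times \{b\}$ the number of codewords is controlled: summing $\#(B(c,e)\cap \ell_b)$ over $c\in C$ must account for all $q$ points of $\ell_b$, and since each ball contributes $2e+1$ points to at most $2e+1$ consecutive lines, a short double-count shows every line meets exactly $t$ codeword-balls and in particular the codewords lying within maximum-distance $e$ of $\ell_b$ have first coordinates forming a system of residues that tiles $\Z_q$ by intervals of length $2e+1$. Next, the key local step uses Lemma \ref{eiLemma} with $i=2$: as $x$ moves vertically through the line $x_2 = b$, whenever the value of $f_C$ jumps (i.e. $f_C(x)\neq f_C(x-e_2)$) the second coordinate of $f_C(x)$ is pinned to $x_2 + e$. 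This forces the codewords "responsible" for a given horizontal strip to all sit on a single line $x_2 = c_2$ with $c_2 \equiv e \pmod{2e+1}$ relative to the strip, and — combined with the minimum distance $2e+1$ and the interval-tiling of the first coordinate just established — it forces the first coordinates of those codewords to be congruent mod $2e+1$ to a single coset, i.e. the codewords on each relevant line are themselves a translate of $(2e+1)\Z_q$.

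From here the argument is an induction on $b$. The base case $b=0$ is the hypothesis. For the inductive step, knowing that the codewords covering the strip around line $b$ form a set $a_b + (2e+1)\Z_q$ on a single line, I would apply Lemma \ref{eiLemma} (now with $i=1$, moving horizontally) to see that the error-correcting function restricted to that line is itself $(2e+1)$-periodic in the first coordinate, and then use the vertical jump relation to pass to the next line of codewords; the covering/packing bookkeeping guarantees we never "lose" a codeword and the lines of codewords are spaced exactly $2e+1$ apart vertically. Concluding, every codeword of $C$ lies on a line $x_2 \equiv 0 \pmod{2e+1}$, wait — more precisely the codeword-lines are an arithmetic progression of step $2e+1$ including $x_2=0$, and on each such line the codewords form a coset of $(2e+1)\Z_q$; hence $C + (2e+1)e_1 \subseteq C$, so $C$ is standard of type $1$.

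The main obstacle I anticipate is the propagation/rigidity step: showing that the codewords near a given horizontal strip really do all lie on one common line and form one common coset mod $2e+1$ in the first coordinate, rather than being split among several lines or cosets. This is where Lemma \ref{eiLemma} must be used carefully together with the exact sphere-packing equalities (no slack is available, since $\#C\cdot(2e+1)^2 = q^2$), and where one has to rule out the a priori possibility that the "type $1$" behaviour on the line $x_2=0$ fails to persist to other lines. Once that rigidity is in hand, the periodicity $C+(2e+1)e_1\subseteq C$ follows directly, and the rest is bookkeeping.
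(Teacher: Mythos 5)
Your plan is sound and, once the details are filled in, gives a correct proof; but it is organized quite differently from the paper's argument. You prove the stronger structural statement directly: by an upward induction on the rows, starting from the hypothesis $(2e+1)\mathbb{Z}_q\times\{0\}\subseteq C$, you show that every codeword lies on a line $x_2\equiv 0\pmod{2e+1}$ and that on each such line the codewords form a coset of $(2e+1)\mathbb{Z}_q$ in the first coordinate. The two ingredients you identify do carry the induction: (a) the balls of the codewords within vertical distance $e$ of a fixed line partition that line into intervals of length $2e+1$, so the first coordinates of those codewords form a coset of $(2e+1)\mathbb{Z}_q$; and (b) once the codewords at height $(2e+1)m$ are known to be a full such coset, their balls tile every line up to height $(2e+1)m+e$, so \emph{every} point of the line $(2e+1)m+e+1$ has a vertical jump, and Lemma \ref{eiLemma} pins all codewords serving that line to the single height $(2e+1)(m+1)$; the packing condition then excludes any further codewords on or strictly between these heights. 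The paper instead argues by contradiction: it picks a codeword $c$ with $c+(2e+1)e_1\notin C$ and $c_2$ minimal, shows $c_2\geq 2e+1$, applies Lemma \ref{eiLemma} once (vertically) to find a codeword at height $c_2-(2e+1)$ which is fully $(2e+1)$-periodic by minimality, and then applies the lemma twice at one well-chosen point $p'$ to force $f_C(p')=c+(2e+1)e_1\in C$, a contradiction. The paper's route is shorter and purely local around the hypothetical bad codeword; yours is longer but delivers as a by-product the full description $C=C_1(0,h)$, i.e.\ essentially Corollary \ref{2DisC1orC2} for these codes. The one point to make fully explicit when writing yours up is step (b): the deduction that all codewords responsible for a strip sit on a single line is only valid because the inductive hypothesis guarantees the jump condition $f_C(x)\neq f_C(x-e_2)$ at every point of the line in question — as you yourself flag, this is the rigidity step where the argument would otherwise be incomplete.
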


\begin{proof}
Assume, to the contrary, that there is a codeword $c=(c_1,c_2)\in C$ such that $c+(2e+1)e_1=(c_1+2e+1,c_2)\not\in C$, and we take $c$ with this property such that $c_2$ is minimum. We claim that $c_2\geq 2e+1$. Indeed, if $0\leq c_2<2e+1$ and we express $c_1=(2e+1)k+r$ with $|r|\leq e$, then $((2e+1)k,e)$ belongs to both balls $B_{\infty}((2e+1)k,e)$ and $B_{\infty}(c,e)$ which is a contradiction, so $c_2\geq 2e+1$. We consider now $p=(c_1,c_2-(e+1))$, then $f_{C}(p+e_2)=c \neq f_{C}(p)$ and by Lemma \ref{eiLemma} we have $f_C(p)=(a,c_2-(2e+1))$ for some $a\in \mathbb{Z}_q$. We observe that $c_2-(2e+1)\geq 0$ and by the minimality of $c_2$ we have that $(a+(2e+1)k,c_2-(2e+1))\in C$ for $0\leq k <t$. Consider now $p'=(c_1+e+1,c_2-e)$ and express $c_1+e+1-a=(2e+1)v+w$ with $v,w \in \mathbb{Z}_q$ and $|w|\leq e$. Clearly, $f_C(p'-e_1)=c\neq f_C(p')$ and $f_C(p'-e_2)=(a+(2e+1)v,c_2-(2e+1))\neq f_C(p')$, thus, by Lemma \ref{eiLemma} we have $f_C(p')=p'+(e,e)=c+(2e+1)e_1$ which is a contradiction.
\end{proof}

By Minkowski-Hajos Theorem (Theorem \ref{MinkowskiConjTh}) every linear $n$-dimensional prefect code in the maximum metric is standard, but in dimension two this is also true for all perfect codes.

\begin{proposition} \label{2Disstandard}
Every two-dimensional perfect code in the maximum metric is standard.
\end{proposition}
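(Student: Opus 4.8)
The plan is to take a code $C \in PL^{\infty}(2,e,q)$ and produce a canonical vector $e_i$ with $C + (2e+1)e_i \subseteq C$, i.e.\ show $C$ is standard. The natural approach is to reduce to the hypothesis of Lemma \ref{standardDim2}, which already does the hard work once we know $C$ contains a translate of $(2e+1)\mathbb{Z}_q \times \{0\}$ (or, after swapping coordinates, a translate of $\{0\}\times (2e+1)\mathbb{Z}_q$). So the first step is: after applying a translation (which preserves perfection and the property of being standard of a given type) we may assume $0 \in C$. Now I want to examine the ``local'' behaviour of the error-correcting function $f_C$ near $0$ using Lemma \ref{eiLemma}, which tells us that whenever $f_C$ jumps as we move by $\pm e_i$, the $i$-th coordinate of the codeword is pinned to $x_i \pm e$.

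The second step is a counting/propagation argument along one of the two axis directions. Consider the ``column'' $\{0\}\times \mathbb{Z}_q$ (or the row). Walking along it, each codeword's ball covers an interval of length $2e+1$ in that coordinate, so the codewords whose balls intersect this line partition the $\mathbb{Z}_q$-worth of points into blocks of size $2e+1$; in particular $0$'s ball covers the points with second coordinate in $\{-e,\dots,e\}$ along the line $\{0\}\times\mathbb{Z}_q$, and by Lemma \ref{eiLemma} the next codeword up (the one correcting $(0,e+1)$) has second coordinate exactly $2e+1$. Iterating this around the cycle of length $q = (2e+1)t$, I get $t$ codewords $c^{(0)}=0, c^{(1)},\dots,c^{(t-1)}$ with second coordinates $0, 2e+1, \dots, (t-1)(2e+1)$, each arising as the value of $f_C$ on a point of the line $\{0\}\times\mathbb{Z}_q$. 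The key claim to extract is that one of the two axis directions yields a codeword translate of an entire cartesian line: more precisely, I will argue that either all of $(2e+1)\mathbb{Z}_q \times \{0\}$ lies in $C$ (after a further translation moving some $c^{(j)}$ to make its row start at a codeword), or the analogous vertical statement holds, so that Lemma \ref{standardDim2} (or its coordinate-swapped twin) applies and finishes the proof.

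The main obstacle, and the step needing the most care, is ruling out the ``diagonal'' scenario in which neither axis line is fully covered by a single row/column of codewords — i.e.\ showing that the $t$ codewords found above, or some translate of them, genuinely line up to give $(2e+1)\mathbb{Z}_q\times\{0\}\subseteq C$ rather than drifting in the first coordinate. I expect to handle this by a minimal-counterexample argument in the spirit of Lemma \ref{standardDim2}: suppose $c=(c_1,c_2)\in C$ but $c+(2e+1)e_1\notin C$ with $c_2$ minimal; use Lemma \ref{eiLemma} applied at the points $(c_1, c_2-(e+1))$ and $(c_1+e+1, c_2-e)$ to force $f_C$ at a nearby point to equal $c+(2e+1)e_1$, contradicting $c+(2e+1)e_1\notin C$. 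This is essentially the argument already carried out in Lemma \ref{standardDim2}, so the real content of Proposition \ref{2Disstandard} is verifying that the hypothesis of that lemma (or its transpose) can always be arranged after a translation, which follows from the propagation step once we observe that the codeword $c^{(0)}=0$ together with the fact that $f_C$ on the row $\mathbb{Z}_q\times\{0\}$ must split $\mathbb{Z}_q$ into $t$ blocks of size $2e+1$ forces, up to relabeling which coordinate plays the role of ``horizontal'', the line $(2e+1)\mathbb{Z}_q\times\{0\}$ to be contained in $C$.
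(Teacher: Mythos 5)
You have the right skeleton (translate so $0\in C$, use Lemma \ref{eiLemma} to locate the codewords correcting points on an axis line, and finish with Lemma \ref{standardDim2}), and you correctly identify the crux as ruling out the ``diagonal'' scenario. But your resolution of that crux is where the gap lies. The observation that $f_C$ on the row $\mathbb{Z}_q\times\{0\}$ splits the row into $t$ blocks of length $2e+1$ only pins down the \emph{first} coordinates of the correcting codewords (they are $0,2e+1,2(2e+1),\dots$); their second coordinates can a priori be any values in $\{-e,\dots,e\}$, so this does not by itself force $(2e+1)\mathbb{Z}_q\times\{0\}\subseteq C$. Likewise for the column. Asserting that ``up to relabeling which coordinate plays the role of horizontal'' one of the two lines must be straight is exactly the statement to be proved, not a consequence of the block decomposition; indeed the analogous statement fails in dimension $3$ (Remark \ref{NonStandardExampleRem}), so any correct argument must use $n=2$ in an essential way, and your proposal never pinpoints where it does.

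The paper's mechanism for this step is the piece you are missing. One assumes $C$ is \emph{not} of type $2$, so that $(0,2e+1)\notin C$; then Lemma \ref{eiLemma} applied at $(0,e+1)$ produces a codeword $q_0=(a,2e+1)$ with $|a|\leq e$ and $a\neq 0$ (WLOG $0<a\leq e$ after the isometry $(x,y)\mapsto(-x,y)$). The induction then propagates the \emph{pair} $p_h=((2e+1)h,0)$, $q_h=(a+(2e+1)h,2e+1)$ simultaneously: at the point $p=((2e+1)h+e+1,e)$ the two jumps $f_C(p-e_1)=p_h$ and $f_C(p+e_2)=q_h$ pin down both coordinates of $f_C(p)$, forcing $f_C(p)=p_{h+1}=((2e+1)(h+1),0)$ --- in particular with second coordinate exactly $0$, which is what kills the drift --- and a symmetric computation at $p'=(a+(2e+1)h+e+1,e+1)$ yields $q_{h+1}$. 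This is precisely where having only two coordinates is used: two known jump directions suffice to determine $f_C(p)$ completely. Without the auxiliary family $q_h$, which is supplied by the assumed failure of type $2$, your propagation along a single row cannot control the transverse coordinate and the proof does not close.
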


\begin{proof}
Let $C\in PL^{\infty}(2,e,q)$ with $q=(2e+1)t$ and $t>1$ an integer. Let we suppose that $C$ is not of type 2, so there exists a codeword $c\in C$ such that $c+(2e+1)e_2 \not\in C$. Composing with a translation if it is necessary we can assume $c=0$. Consider $p=(0,e+1)$, by Lemma \ref{eiLemma} we have $f_C(p)=(a,2e+1)$ with $|a|\leq e$ and $a\neq 0$. Composing with the isometry $(x,y)\mapsto (-x,y)$ if necessary we may assume $0<a\leq e$. We consider the following statement: $\{(p_h=(2e+1)h,2e+1),q_h=(a+(2e+1)h,2e+1)\}\subseteq C$, which is valid for $h=0$ (from above). Assume that this property holds for a fixed $h$, $0\leq h < t$ and consider $p=((2e+1)h+e+1,e)$. This point verifies $f_C(p-e_1)=p_h\neq f_C(p)$ and $f_C(p+e_2)=q_h\neq f_C(p)$, so by Lemma \ref{eiLemma} we have $f_C(p)=p+(e,-e)=((2e+1)(h+1),0)=p_{h+1} \in C$. Now consider $p'=(a+(2e+1)h+e+1,e+1)$ which verifies $f_C(p'-e_1)=q_h\neq f_C(p')$ and $f_C(p'-e_2)=p_{h+1}\neq f_C(p')$, so by Lemma \ref{eiLemma} we have $f_C(p')=p'+(e,e)=(a+(2e+1)(h+1),2e+1)=q_{h+1}\in C$. By induction we have that $(2e+1)\mathbb{Z}_q \times \{0\} \subseteq C$ and by Lemma \ref{standardDim2} our code $C$ is standard.
\end{proof}

\begin{corollary} \label{2DisC1orC2}
Every two-dimensional perfect code $C$ in the maximum metric is of the form $C=C_1(a,h)$ or $C=C_2(a,h)$ for some $a\in \mathbb{Z}_q$ and some function $h:\mathbb{Z}_t \rightarrow \mathbb{Z}_q$.
\end{corollary}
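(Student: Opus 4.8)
The plan is to deduce Corollary \ref{2DisC1orC2} from Proposition \ref{2Disstandard} by showing that a standard code of type $1$ is exactly a code of the form $C_1(a,h)$, and symmetrically for type $2$ and $C_2(a,h)$. So first I would invoke Proposition \ref{2Disstandard} to conclude that $C \in PL^{\infty}(2,e,q)$ is standard; up to the transposition isometry $(x,y)\mapsto(y,x)$ (which swaps the horizontal and vertical constructions) we may assume $C$ is of type $1$, i.e. $C + (2e+1)e_1 \subseteq C$, hence (iterating) $C + (2e+1)\Z_q \times\{0\} \subseteq C$.

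Next I would analyze the structure imposed by this translation invariance. Since translation by $(2e+1,0)$ maps $C$ into itself and has order $t$ on the first coordinate, the codewords come in orbits of size exactly $t$ under this translation: for each codeword $c=(c_1,c_2)$ the whole set $\{(c_1+(2e+1)s, c_2): s\in\Z_t\}$ lies in $C$. The key step is then to check that the second coordinates occurring in $C$ run over a complete set of representatives modulo $2e+1$, and that each residue class of the second coordinate is hit by exactly one such orbit. This is a counting argument: $\#C = t^2$ by the sphere-packing condition (Proposition \ref{ExistencePropDim2} and the discussion preceding it), each orbit has size $t$, so there are exactly $t$ orbits; and if two codewords had second coordinates congruent mod $2e+1$ but lay in different orbits, their balls of radius $e$ would not be disjoint once we also use the horizontal invariance — more precisely, two codewords $c, c'$ with $c_2 \equiv c_2' \pmod{2e+1}$ have, after an appropriate horizontal translate within the orbit, first coordinates within distance $e$, so $d(c,c') \le e$, contradicting $\mathrm{dist}(C) = 2e+1$. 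Hence the $t$ orbits are indexed by the $t$ residues $a + (2e+1)k$, $k\in\Z_t$, of the second coordinate, for a fixed $a$ with $0 \le a < 2e+1$ (the common residue class of all second coordinates mod $2e+1$).

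Finally I would package this as a function: for each $k \in \Z_t$ let $h(k) \in \Z_q$ be the first coordinate of the (unique, after choosing a canonical representative in the orbit) codeword whose second coordinate is $a + (2e+1)k$. Then by the horizontal invariance the orbit of that codeword is precisely $\{(h(k)+(2e+1)s,\, a+(2e+1)k): s\in\Z_t\}$, and taking the union over $k\in\Z_t$ gives exactly $C_1(a,h)$. This exhausts all $t^2$ codewords, so $C = C_1(a,h)$; undoing the transposition in the type $2$ case gives $C = C_2(a,h)$.

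The main obstacle I expect is the disjointness/counting step that pins each residue class of the second coordinate to a single horizontal orbit: one has to argue carefully, using $\mathrm{dist}(C)=2e+1$ together with the horizontal translation invariance, that no two distinct orbits can share a second-coordinate residue class (otherwise translating one codeword along its orbit brings it within maximum-distance $e$ of a codeword in the other orbit). Everything else — the reduction to type $1$, the orbit decomposition, and the readout of $h$ — is bookkeeping once this is in place.
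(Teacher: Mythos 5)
Your overall strategy --- reduce to type $1$ via Proposition \ref{2Disstandard} and the transposition $(x,y)\mapsto(y,x)$, decompose $C$ into $t$ horizontal orbits of size $t$, and read off $h$ --- is exactly the derivation the paper leaves implicit. But the step you yourself identify as the crux is flawed as written. From ``first coordinates within distance $e$'' you conclude $d(c,c')\le e$; since $d$ is the maximum metric, $d(c,c')=\max\{d(c_1,c_1'),d(c_2,c_2')\}$, and under your hypothesis $c_2\equiv c_2'\pmod{2e+1}$ with $c_2\neq c_2'$ one has $d(c_2,c_2')\ge 2e+1$ (a nonzero multiple of $2e+1$ has Lee norm at least $2e+1$ in $\mathbb{Z}_q$ when $q=(2e+1)t$), so no contradiction with $\mathrm{dist}(C)=2e+1$ arises. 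As stated, your argument only rules out two distinct orbits having \emph{equal} second coordinates. Moreover, the claim you aim at (``each residue class mod $2e+1$ is hit by exactly one orbit'') is neither what is needed nor even consistent with the counts: there are $2e+1$ residue classes but $t$ orbits. What must actually be shown is that the $t$ second coordinates, one per orbit, form a single coset $a+(2e+1)\mathbb{Z}_q$, i.e.\ a one-dimensional perfect code.

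The gap is repairable with the same translation trick, applied correctly. If two orbits have second coordinates $y\neq y'$ with $d(y,y')\le 2e$, translate one codeword along its orbit so that the first coordinates come within Lee distance $e$ of each other (possible because the first coordinates of an orbit form a full coset of $(2e+1)\mathbb{Z}_q$, whose radius-$e$ balls cover $\mathbb{Z}_q$); then the two codewords are at maximum distance at most $\max\{e,2e\}=2e$, contradicting $\mathrm{dist}(C)=2e+1$. Hence the $t$ second coordinates are pairwise at Lee distance at least $2e+1$, so their radius-$e$ balls in $\mathbb{Z}_q$ are disjoint and, by the count $t(2e+1)=q$, tile $\mathbb{Z}_q$; the only such center set is $a+(2e+1)\mathbb{Z}_q$, as in the paper's classification of $PL^{\infty}(1,e,q)$. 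With that established, your definition of $h(k)$, the identification of each orbit with $\{(h(k)+(2e+1)s,\,a+(2e+1)k):s\in\mathbb{Z}_t\}$, and the cardinality comparison $\#C=t^2=\#C_1(a,h)$ complete the proof.
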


\begin{remark}\label{NonStandardExampleRem}
Proposition \ref{2Disstandard} cannot be generalized to higher dimensions. For example the code $C=\{(0,0,0),(5,0,0),(1,0,5),(6,0,5),(1,5,0),(6,5,1),$ $(1,5,5),$ $(6,5,6)\} \in PL^{\infty}(3,2,10)$ is a three-dimensional non-standard perfect code.
\end{remark}


\begin{corollary}
The number of $(2,e,q)$-perfect codes is $(2e+1)^2\left( 2(2e+1)^{t-1}-1 \right)$.
\end{corollary}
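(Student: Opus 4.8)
The plan is to enumerate $PL^{\infty}(2,e,q)$ by inclusion--exclusion over the two families of codes coming from the horizontal and vertical constructions. Write $r=2e+1$, so $q=rt$ with $t>1$, and let $S_1$ (resp.\ $S_2$) be the set of codes of the form $C_1(a,h)$ (resp.\ $C_2(a,h)$) with $a\in\mathbb{Z}_q$ and $h\colon\mathbb{Z}_t\to\mathbb{Z}_q$. By the text these are indeed perfect codes, and by Corollary \ref{2DisC1orC2} every two-dimensional perfect code lies in $S_1\cup S_2$; hence $PL^{\infty}(2,e,q)=S_1\cup S_2$ and $\#PL^{\infty}(2,e,q)=|S_1|+|S_2|-|S_1\cap S_2|$, so the task reduces to computing these three cardinalities.

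First I would compute $|S_1|$. Let $H_0=r\mathbb{Z}_q$ be the unique subgroup of $\mathbb{Z}_q$ of order $t$; since $H_0=\{y\in\mathbb{Z}_q:\,r\mid y\}$, a coset of $H_0$ is determined by a residue in $\mathbb{Z}_r$, and there are exactly $r$ of them. The key observation is that the code $C_1(a,h)$ depends only on the residues $a\bmod r$ and $h(\,\cdot\,)\bmod r$, and that these residues are recoverable from the code: the set of second coordinates occurring in $C_1(a,h)$ is the coset $a+H_0$ (which reveals $a\bmod r$), and for each fixed second coordinate $a+kr$ the set of first coordinates occurring is the coset $h(k)+H_0$ (which reveals $h(k)\bmod r$). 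Hence a code in $S_1$ corresponds bijectively to a choice of $\bar a\in\mathbb{Z}_r$ together with a function from the $t$-element set $\bar a+H_0$ to $\mathbb{Z}_r$, giving $|S_1|=r\cdot r^{t}=r^{t+1}$; swapping the two coordinates (an isometry of $\mathbb{Z}_q^2$) identifies $S_1$ with $S_2$, so $|S_2|=r^{t+1}$ as well.

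The main obstacle is to identify $S_1\cap S_2$, and I expect to show it equals the set of translates of the cartesian code $r\mathbb{Z}_q^2$, which has exactly $r^2$ elements. One inclusion is immediate: for any $v\in\mathbb{Z}_q^2$ the translate $v+r\mathbb{Z}_q^2$ is a perfect code (translation is an isometry) and is simultaneously of the form $C_1(v_2,h)$ and $C_2(v_1,h')$ with $h\equiv v_1$ and $h'\equiv v_2$ constant; moreover $v+r\mathbb{Z}_q^2=v'+r\mathbb{Z}_q^2$ exactly when $v,v'$ agree coordinatewise mod $r$, so there are $r^2$ of them. For the reverse inclusion, suppose $C\in S_1\cap S_2$, say $C=C_1(a,h)=C_2(a',h')$: reading the set of first coordinates of $C$ off the $C_2$-description gives a single coset of $H_0$, while reading it off the $C_1$-description gives $\bigcup_{k\in\mathbb{Z}_t}(h(k)+H_0)$; a union of $H_0$-cosets is a single coset only if all of them agree, forcing $h$ to be constant mod $r$, whence $C=(h(0),a)+r\mathbb{Z}_q^2$ by the correspondence of the previous paragraph. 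Thus $|S_1\cap S_2|=r^2$, and combining everything $\#PL^{\infty}(2,e,q)=2r^{t+1}-r^2=r^2\big(2r^{t-1}-1\big)=(2e+1)^2\big(2(2e+1)^{t-1}-1\big)$, as claimed. The genuinely delicate step is this last reverse inclusion; the counts $|S_1|=|S_2|=(2e+1)^{t+1}$ are routine coset bookkeeping once one notices that only the residues mod $2e+1$ of the parameters $a$ and $h$ affect the resulting code.
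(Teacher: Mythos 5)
Your proof is correct and follows essentially the same route as the paper's: both rest on Corollary \ref{2DisC1orC2} and an inclusion--exclusion over the horizontal and vertical families. The only difference is bookkeeping --- the paper first normalizes by translation (the $(2e+1)^2$-to-one map $C\mapsto C-f_C(0)$) and then counts type-$2$ codes through the origin as $(2e+1)^{t-1}$, whereas you count the full families $S_1,S_2$ directly via residues mod $2e+1$ and identify $S_1\cap S_2$ with the $(2e+1)^2$ translates of the cartesian code; both computations give $2(2e+1)^{t+1}-(2e+1)^2$.
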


\begin{proof}
We consider the sets $L=PL^{\infty}(2,e,q), L^0 = \{C \in L: 0\in C \}$ and $L_{i}^{0}= \{C \in L^0: C \textrm{ is of type }i\}$ for $i=1,2$. The map $L \twoheadrightarrow L^0$ given by $C \mapsto C-f_C(0)$ is $(2e+1)^2$ to $1$, so $\#L = (2e+1)^2 \#L^0$. By Proposition \ref{2Disstandard}, $L^{0}=L_1^{0}\cup L_2^{0}$ and considering the involution $L_1^{0}\rightarrow L_2^{0}$ given by $C \mapsto \theta(C)$ where $\theta(x,y)=(y,x)$, which has exactly one fixed point (given by $(2e+1)\Z_q^n$) we have $\#L^{0}=2\#L_2^{0}-1$. Finally, codes in $L_2^{0}$ are univocally determined by a function $h: \Z_t \rightarrow \Z_{2e+1}$ verifying $h(0)=0$, thus $\L_2^{0}=(2e+1)^{t-1}$ and so $\#L=(2e+1)^2\left( 2(2e+1)^{t-1}-1 \right)$.
\end{proof}

\subsection{Generator matrices and admissible structures for two-dimensional perfect codes} In this part we provide generator matrices for linear perfect codes and we describe all two-dimensional cyclic perfect codes in the maximum metric. A description of which group structure can be represented by two-dimensional linear perfect codes is given.


\begin{notation}
We denote by $LPL^{\infty}(2,e,q)_{o}$ the set of $(2,e,q)$-perfect codes of type $2$.
\end{notation}

By Proposition \ref{2Disstandard}, every two-dimensional perfect code is of type $1$ or is of type $2$ and the isometry $\pi(x,y)=(y,x)$ induces a correspondence between the codes of type $1$ and codes of type $2$. So, without loss of generality we can restrict our study to type $2$ perfect codes.

\begin{theorem}\label{GeneratorForLPL2D}
Let $q=(2e+1)t$ with $t>1$, $d_1=\gcd(2e+1,t)$ and $h_1= \frac{2e+1}{d_1}$. Every integer matrix of the form $M= \left( \begin{matrix}
2e+1 & kh_1 \\ 0 & 2e+1 \end{matrix} \right)$ with $k \in \Z$ is the generator matrix of some type $2$ perfect code $C \in LPL^{\infty}(2,e,q)_{o}$. Conversely, every type $2$ perfect code $C \in LPL^{\infty}(2,e,q)_{o}$ has a generator matrix of this form.
\end{theorem}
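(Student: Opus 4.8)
The plan is to exploit the structure theorem already established, namely that every type $2$ perfect code $C\in LPL^{\infty}(2,e,q)_o$ is of the form $C_2(a,h)$ for some $a\in\Z_q$ and some $h:\Z_t\to\Z_q$ (Corollary \ref{2DisC1orC2}). Since $C$ is linear and contains $0$ after the translation built into $C_2(a,h)$ (note $(a+(2e+1)\cdot 0, h(0)+(2e+1)\cdot 0)\in C$), linearity forces $a\in(2e+1)\Z_q$; composing with a translation by a codeword we may normalize so that the ``first column'' generator is exactly $(2e+1,0)$ (equivalently $(2e+1)\Z_q\times\{0\}\subseteq C$). The remaining freedom is in the second generator: because $C$ is linear of type $2$, the value $h$ must itself be linear in $k$, i.e. $h(k)\equiv kb\pmod q$ for some $b$, so that $C=\mathrm{span}\,\overline{M}$ with $M=\left(\begin{matrix}2e+1 & b\\ 0 & 2e+1\end{matrix}\right)$.

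First I would verify the ``every such $M$ is a generator matrix of a perfect code'' direction. Here one checks two things: that $\overline{M}$ generates a genuine $(2,e,q)$-code (cardinality $t^2$, minimum distance $\ge 2e+1$, hence perfect by the sphere-packing count), which follows from the horizontal/vertical construction discussion preceding Lemma \ref{eiLemma} applied to the linear map $h(k)=kh_1$; and that $M$ is an \emph{admissible} generator matrix in the sense of the Remark after the definition of generator matrix, i.e. $qM^{-1}\in\mathcal{M}_2(\Z)$. We compute $\det M=(2e+1)^2$ and $qM^{-1}=\frac{q}{(2e+1)^2}\left(\begin{matrix}2e+1 & -kh_1\\ 0 & 2e+1\end{matrix}\right)=\frac{t}{2e+1}\left(\begin{matrix}2e+1 & -kh_1\\0 & 2e+1\end{matrix}\right)$, whose entries are $t$, $0$, $t$, and $-\frac{t k h_1}{2e+1}=-\frac{tk}{d_1}$; since $d_1=\gcd(2e+1,t)\mid t$, this last entry is an integer. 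So $qM^{-1}\in\mathcal M_2(\Z)$ and $M$ is a bona fide generator matrix; that the associated code is type $2$ is clear since $(0,2e+1)\in\mathrm{span}\,\overline M$.

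For the converse I would argue as follows. Let $C\in LPL^{\infty}(2,e,q)_o$; by the structure result we may assume (after translating by a codeword, which does not change the generator matrix up to the relevant equivalence) that $(2e+1)\Z_q\times\{0\}\subseteq C$ and that $C=C_2(0,h)$ with $h(0)=0$. Linearity of $C$ then gives $h(k+k')\equiv h(k)+h(k')\pmod q$ for all $k,k'$ — this is because $(k,h(k)),(k',h(k'))$ give codewords whose sum, read through the $C_2$-parametrization, must be the codeword indexed by $k+k'$ up to an element of $(2e+1)\Z_q\times\{0\}$, which has first coordinate a multiple of $2e+1$ and hence forces the indices to add. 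Therefore $h(k)\equiv kb\pmod q$ with $b=h(1)$, and $C=\mathrm{span}\,\overline M$ for $M_0=\left(\begin{matrix}2e+1 & b\\0 & 2e+1\end{matrix}\right)$. It remains to see that $b$ can be taken to be a multiple of $h_1=\frac{2e+1}{d_1}$: since $M_0$ must be an admissible generator matrix, the integrality of $qM_0^{-1}$ computed above forces $\frac{tb}{(2e+1)^2}\cdot(2e+1)=\frac{tb}{2e+1}\in\Z$, i.e. $(2e+1)\mid tb$, i.e. $\frac{2e+1}{d_1}\mid b$ since $\gcd\!\left(\frac{2e+1}{d_1},\frac{t}{d_1}\right)=1$. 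Hence $b=kh_1$ for some $k\in\Z$, as required.

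I expect the main obstacle to be the careful bookkeeping in the converse: pinning down exactly which normalizations (translation by a codeword; the implicit choice of basis for the lattice $\Lambda_C$) are legitimate without changing the claimed ``has a generator matrix of this form'', and making the passage from ``$h$ is a function making $C_2(0,h)$ linear'' to ``$h$ is $\Z$-linear mod $q$'' fully rigorous. A secondary subtlety is that a linear code can have many generator matrices (differing by $GL_2(\Z)$ on the left); one must check that among them there is always one of the stated upper-triangular shape with the $(2e+1,2e+1)$ diagonal — this is where one uses that $(2e+1)e_1,(2e+1)e_2\in\Lambda_C$ (the first from the type-$2$ normalization, the second because $C\subseteq\Z_q^2$ always gives $q\Z^2\subseteq\Lambda_C$ and in fact $(2e+1)e_2\in\Lambda_C$ since $(0,2e+1)$ reduces into $C$), together with a Hermite-normal-form argument to clear the $(2,1)$ entry while keeping the diagonal equal to $2e+1$.
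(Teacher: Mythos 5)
Your overall strategy is close to the paper's: reduce to $C=C_2(0,h)$ via the classification of two\mbox{-}dimensional perfect codes, read off the generators $(2e+1,h(1))$ and $(0,2e+1)$, and then pin down $h(1)$. The one genuinely different ingredient is how you extract the divisibility $h_1\mid b$: you get it from the integrality of $qM_0^{-1}$, whereas the paper gets it from the minimum distance, observing that with $ty_1=(2e+1)s+r$, $0\le r<2e+1$, the element $tc_1-sc_2=(\overline{0},\overline{r})$ is a codeword of norm $r<2e+1$ and hence must vanish. Your route works, but it leans on the assertion that ``$M_0$ must be an admissible generator matrix,'' which is exactly what the paper's remark after the definition of generator matrix warns is \emph{not} automatic: a matrix whose reduction generates $C$ as a $\Z$-module need not have rows forming a $\Z$-basis of $\Lambda_C$. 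You do have the ingredients to close this (and you gesture at them at the end): $\mbox{span}_{\Z}(M_0)\subseteq\Lambda_C$ and both sublattices have index $\det(M_0)=(2e+1)^2=q^2/\#C$ in $\Z^2$ (using $\#C=t^2$ from perfection), so they coincide. This must appear in the main line of the argument, since without it the appeal to $qM_0^{-1}\in\mathcal{M}_{2}(\Z)$ is circular.

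Two smaller repairs. First, the normalization ``$(2e+1)\Zq\times\{0\}\subseteq C$'' is wrong for a type $2$ code: that inclusion is the type $1$ condition (Lemma \ref{standardDim2}), and for a type $2$ linear code one gets $\{0\}\times(2e+1)\Zq\subseteq C$ for free, with no translation needed (translating by a codeword does not change a linear code at all). As literally written, your hypothesis combined with $C=C_2(0,h)$ forces $h(k)\in(2e+1)\Zq$ for every $k$, i.e.\ $C$ cartesian; fortunately the assumption is never used afterwards. Second, linearity only gives $h(k+k')\equiv h(k)+h(k')$ modulo the subgroup $(2e+1)\Zq$ (the second coordinate in the $C_2$-parametrization is only determined up to that subgroup), not modulo $q$; this weaker additivity still yields $C=\langle(2e+1,b),(0,2e+1)\rangle$ with $b=h(1)$, which is all you need. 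The forward direction is correct and essentially identical to the paper's.
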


\begin{proof}
Let $M= \left( \begin{matrix} 2e+1 & kh_1 \\ 0 & 2e+1 \end{matrix} \right)$ with $k \in \Z$. Since $qM^{-1}=\left( \begin{matrix}
t & -k\frac{t}{d_1} \\ 0 & t \end{matrix} \right)$ has integer coefficient, $M$ is the generator matrix of the $q$-ary code $C=\langle \overline{c_1}, \overline{c_2} \rangle \subseteq \Z_q^2$ where $c_1=(2e+1,kh1)$ and $c_2=(0,2e+1)$. Every codeword is of the form $\overline{c}=x\overline{c_1}+y\overline{c_2}$ with $x,y \in \Z$. Since $||\overline{c}||_{\infty}=\max\{|\overline{(2e+1)x}|_{1}, |\overline{kh_1x+(2e+1)y|_{1}}\}$, the inequality $||\overline{c}||_{\infty}<2e+1$ implies $\overline{c}=0$, thus the minimum distance of 
$C$ is $\mbox{dist}(C)\geq 2e+1$. In addition, the cardinality of $C$ is $\#C=q^2/\det(M)=t^2$, so by the sphere packing condition the code $C$ is perfect with packing radius $e$ and it is of type $2$ because is linear and $\overline{c_2}=(2e+1)\overline{e_2}\in C$. To prove the converse we consider a code $C \in LPL^{\infty}(2,e,q)_{o}$, since $C$ is linear then $0\in C$ and $C=C_2(0,h)$ for some $h: \Z_t \rightarrow \Z_q$. In particular, $C$ has two codewords $c_1=(\overline{2e+1},\overline{y_1})$ and $c_2=(\overline{0},\overline{2e+1})$ where $y_1\in\Z$ is such that $\overline{y_1}=h(1)\in\Z_q$. Let $ty_1=(2e+1)s+r$ with $s,r\in\Z$ and $0\leq r < 2e+1$. By linearity $tc_1-sc_2=(\overline{0},\overline{r})\in C$ which has minimum distance $2e+1$, so $r=0$ and $y_1=h_1k$ for some integer $k$. The code $C'$ generated by $c_1$ and $c_2$ has generator matrix $M= \left( \begin{matrix} 2e+1 & kh_1 \\ 0 & 2e+1 \end{matrix} \right)$, therefore by the first part, the code $C'$ generated by $c_1$ and $c_2$ is a $(2,e,q)$-perfect code which is contained in $C$, so $C'=C$.
\end{proof}

\begin{notation}
We denote by $LC_q(e,k)$ the $q$-ary perfect code whose generator matrix is given by $ \left( \begin{matrix} 2e+1 & kh_1 \\ 0 & 2e+1 \end{matrix} \right)$.
\end{notation}

\begin{remark}\label{GeneratorForLPL2DRemark}
Replacing the first row by its sum with an integer multiple of the second row if it were necessary, we can always suppose that the number $k$ in the statement of Theorem \ref{GeneratorForLPL2D} verify $0\leq k < d_1$. In fact, it is possible replace $k$ by any integer congruent to $k$ modulo $d_1$ with this elementary operation in rows, so $LC_q(e,k)=LC_q(e,k_0)$ if $k\equiv k_0 \pmod{d_1}$.
\end{remark}

Now we approach the problem of what group isomorphism classes are represented by linear $(2,e,q)$-perfect codes ({\it admissible structures}). By the sphere packing condition, if $C \in LPL^{\infty}(2,e,q)$ then $\#C=t^2$. The structure theorem for finitely generated abelian groups \cite[p.~338]{Fraleigh} in this case establishes that $C\simeq \mathbb{Z}_{t/d}\times \mathbb{Z}_{dt}$ for some $d|t$ (where $d$ determines the isomorphism class of $C$). In this way, the question of what isomorphism classes are represented by two-dimensional perfect codes in the maximum metric is equivalent to determining for what values of $d|t$ there exists $C\in LPL^{\infty}(2,e,q)$ such that $C\simeq \mathbb{Z}_{t/d}\times \mathbb{Z}_{dt}$.

\begin{lemma}\label{OtherGeneratorforLPL2DLemma}
Let $q=(2e+1)t, d_1=\gcd(2e+1,t), h_1=\frac{2e+1}{d_1}, d_2=\gcd(d_1,k), h_2=\frac{d_1}{d_2}, k_1= \frac{k}{d_2}$ and $k'\in \Z$ such that $k_1 k \equiv 1 \pmod{h_2}$. Then $N=\left( \begin{matrix}
(2e+1)h_2 & 0 \\ (2e+1)k' & h_1 d_2 \end{matrix}  \right)$ is a generator matrix for $LC_q(e,k)$.
\end{lemma}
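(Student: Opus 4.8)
The goal is to show that the matrix $N=\left(\begin{matrix}(2e+1)h_2 & 0 \\ (2e+1)k' & h_1 d_2\end{matrix}\right)$ is a generator matrix for the code $LC_q(e,k)$, whose given generator matrix is $M=\left(\begin{matrix}2e+1 & kh_1 \\ 0 & 2e+1\end{matrix}\right)$. My plan is to verify two things: first, that $\mathrm{span}(\overline N)=\mathrm{span}(\overline M)=LC_q(e,k)$ as subgroups of $\Z_q^2$; and second, that $N$ is actually a valid generator matrix in the sense of the paper's definition, i.e. (by the Remark following the definition of generator matrix) that $qN^{-1}$ has integer entries. I would carry these out in that order.

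First I would compute $qN^{-1}$ directly. We have $\det(N)=(2e+1)^2 h_1 h_2 d_2 = (2e+1)^2 \cdot \frac{2e+1}{d_1}\cdot h_2 d_2 = (2e+1)^2\cdot\frac{2e+1}{d_1}\cdot d_1 = (2e+1)^3$, so $\det(N)=(2e+1)^2(2e+1)$; wait — let me recompute carefully in the write-up, since $h_1 d_2 h_2 \cdot(2e+1)^2$ with $h_2 d_2 = d_1$ and $h_1 d_1 = 2e+1$ gives $\det(N)=(2e+1)^3$. Hmm, but $\det(M)=(2e+1)^2$, so the determinants do not match and $N$ as written cannot generate the same lattice. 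This is exactly the main obstacle: I expect that the intended matrix entry is not $h_1 d_2$ but rather something making $\det(N)=(2e+1)^2$, or that I am misreading the parameters; so the first real step is to pin down $\det(N)$ and reconcile it with $\det(\Lambda_C)=q^2/\#C=(2e+1)^2$. Assuming the determinant works out to $(2e+1)^2$ (which forces the relation among $h_1,h_2,d_1,d_2$ to be used correctly), one gets $qN^{-1}=\frac{q}{\det N}\,\mathrm{adj}(N)$, and I would check that each entry of $\mathrm{adj}(N)$ is divisible by the appropriate factor: the divisibility $h_2\mid t$ (from $h_2\mid d_1\mid t$) and $d_2\mid k$ are the facts that make this integral.

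For the span equality, I would show each generating row of $\overline N$ lies in $LC_q(e,k)=\langle\overline{(2e+1,kh_1)},\overline{(0,2e+1)}\rangle$ and conversely, using integer row operations together with reductions mod $q$. The first row $(2e+1)h_2\cdot e_1$ of $N$: since $(2e+1,kh_1)=(2e+1,d_2 k_1 h_1)$ and the second generator is $(0,2e+1)$ with $h_1\mid 2e+1$, one can clear the second coordinate of $h_2$ copies of the first $M$-generator modulo $q$ (here $h_2 k h_1 = h_2 d_2 k_1 h_1 = d_1 k_1 h_1 = k_1(2e+1)$, a multiple of $2e+1$, hence absorbable into the $(0,2e+1)$ generator mod $q$). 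The second row $((2e+1)k',h_1 d_2)$: the congruence $k_1 k\equiv 1\pmod{h_2}$ is designed so that $k'$ copies of the $M$-generators reproduce it — write $k'(2e+1,kh_1)+\text{(multiple of }(0,2e+1))$ and use $k' k h_1 \equiv $ (the target second coordinate) modulo $q$, invoking $k_1 k\equiv 1\pmod{h_2}$ to control the second coordinate. The reverse containment (each $M$-generator in $\mathrm{span}(\overline N)$) follows symmetrically, or more cheaply: once both are linear $(2,e,q)$-codes of the same cardinality $t^2$ with one contained in the other, they are equal — this is the same endgame as in the proof of Theorem \ref{GeneratorForLPL2D}. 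The step I expect to be genuinely delicate is tracking the second coordinates modulo $q$ (not just modulo $2e+1$), since $q=(2e+1)t$ and the various $h_i,d_i,k_i,k'$ interact there; the cleanest route is probably to prove $\mathrm{span}(N)\subseteq\mathrm{span}(M)$ over $\Z$ (genuine integer lattices, no reduction), check $\det N=\det M$, conclude $\mathrm{span}(N)=\mathrm{span}(M)=\Lambda_C$ as lattices, and then $N$ is automatically a generator matrix with no separate integrality check needed.
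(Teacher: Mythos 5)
Your central difficulty is a determinant miscalculation, and it derails the whole argument. The $(2,2)$ entry of $N$ is $h_1d_2$, not $(2e+1)h_1d_2$, so
$\det(N)=(2e+1)h_2\cdot h_1d_2=(2e+1)\,h_1(h_2d_2)=(2e+1)\,h_1d_1=(2e+1)^2=\det(M)$,
using $h_2d_2=d_1$ and $h_1d_1=2e+1$. Your value $(2e+1)^3$ carries a spurious extra factor of $2e+1$, and on its basis you conclude that ``$N$ as written cannot generate the same lattice'' and leave the rest of the proof conditional on an unresolved doubt. Beyond that, the containment $\mathrm{span}(N)\subseteq\mathrm{span}(M)$ is only gestured at for the second row: the needed computation is $k'(2e+1,kh_1)+m(0,2e+1)=((2e+1)k',\,h_1d_2)$ with $m=\frac{h_1d_2-k'kh_1}{2e+1}=\frac{1-k_1k'}{h_2}$, which is an integer precisely because the hypothesis (which contains a typo: it should read $k_1k'\equiv 1\pmod{h_2}$, else $k'$ is not constrained at all) holds; you invoke the congruence but never exhibit this coefficient, which is where the hypothesis actually enters. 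So as written the proposal does not establish the lemma.

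For comparison, the paper's proof is exactly the completed form of your ``cleanest route'': it exhibits the single unimodular matrix
$U=\left(\begin{smallmatrix} h_2 & -k_1 \\ k' & (1-k_1k')/h_2\end{smallmatrix}\right)$,
checks $\det(U)=1$ and $UM=N$ (the first row uses $h_2kh_1=(2e+1)k_1$, the second row uses the integrality of $(1-k_1k')/h_2$), and concludes that $N$ generates the same lattice $\Lambda_C$. Your plan of proving $\mathrm{span}(N)\subseteq\mathrm{span}(M)$ over $\Z$ and comparing determinants is equivalent, but it becomes a proof only after the determinant is computed correctly and the integer coefficients expressing the rows of $N$ in terms of those of $M$ are actually produced.
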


\begin{proof}
Let $M$ be the generator matrix for $LC_q(e,k)$ given in Theorem \ref{GeneratorForLPL2D} and $U= \left( \begin{matrix}
h_2 & -k_1 \\ k' & \frac{1-k_1k'}{h_2}
\end{matrix}  \right)$. Since $\det(U)=1$ and $UM=N$ we have that $N$ is also a generator matrix for $LC_q(e,k)$. 
\end{proof}

\begin{theorem}\label{Structure2DTh}
Let $q=(2e+1)t$, $k$ be an integer and $h_2=\frac{\gcd(2e+1,t)}{\gcd(2e+1,t,k)}$.
\begin{itemize}
\item[(i)] $LC_q(e,k)\simeq \Z_{{t}/{h_2}}\times \Z_{th_2}$ (isomorphic as groups).
\item[(ii)] There exists $C \in LPL^{\infty}(2,e,q)$ such that $C \simeq \Z_{{t}/{d}}\times \Z_{td}$ if and only if $d \mid \gcd(2e+1,t)$.
\end{itemize}
\end{theorem}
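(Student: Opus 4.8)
The plan is to leverage the generator matrix $M = \left(\begin{smallmatrix} 2e+1 & kh_1 \\ 0 & 2e+1\end{smallmatrix}\right)$ from Theorem \ref{GeneratorForLPL2D} together with the fact (recalled in the preliminaries) that the group structure of a linear code $C$ with generator matrix $M$ is determined by the Smith normal form of $A = qM^{-1}$, with $\#C = \det A$. For part (i), I would compute $A = qM^{-1} = \left(\begin{smallmatrix} t & -kt/d_1 \\ 0 & t\end{smallmatrix}\right)$ and find its Smith normal form. The first invariant factor $a_1$ is the gcd of all entries, namely $\gcd(t, kt/d_1) = t\cdot\gcd(1, k/d_1)$... more carefully, $a_1 = \gcd(t, kt/d_1)$; writing $g = \gcd(d_1,k)$ so that $h_2 = d_1/g$, one gets $\gcd(t, kt/d_1) = (t/d_1)\gcd(d_1, k) \cdot$ (correction factor) — the clean way is $a_1 = \gcd(t, kt/d_1) = t/\gcd(t \cdot d_1/\gcd, \ldots)$; concretely I expect $a_1 = t/h_2$ after simplification, using $d_1 \mid t$. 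Then $a_2 = \det(A)/a_1 = t^2/(t/h_2) = th_2$. This gives $C \simeq \Z_{t/h_2}\times\Z_{th_2}$ as claimed. Alternatively, and perhaps more transparently, I would invoke Lemma \ref{OtherGeneratorforLPL2DLemma}: the matrix $N = \left(\begin{smallmatrix}(2e+1)h_2 & 0 \\ (2e+1)k' & h_1 d_2\end{smallmatrix}\right)$ is also a generator matrix for $LC_q(e,k)$, so $qN^{-1}$ has the explicit form $\left(\begin{smallmatrix} t/h_2 & 0 \\ * & t h_2 \cdot g/d_1\end{smallmatrix}\right)$... again one reads off the invariant factors after checking the off-diagonal term is a multiple of the top-left one (using $k_1 k' \equiv 1 \bmod h_2$).

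For part (ii), the ``if'' direction is immediate from part (i): given $d \mid d_1 = \gcd(2e+1,t)$, I want to choose $k$ so that $h_2 = d_1/\gcd(d_1,k) = d$, i.e. $\gcd(d_1,k) = d_1/d$. Taking $k = d_1/d$ works since $\gcd(d_1, d_1/d) = d_1/d$ (as $d_1/d \mid d_1$). Then $LC_q(e, d_1/d) \simeq \Z_{t/d}\times\Z_{td}$ by part (i), exhibiting the desired code. For the ``only if'' direction, I need to show that no linear perfect code realizes $\Z_{t/d}\times\Z_{td}$ when $d \nmid d_1$. By Theorem \ref{GeneratorForLPL2D} (and its analogue for type $1$ codes, via the isometry $(x,y)\mapsto(y,x)$ which does not change the isomorphism type), every linear $(2,e,q)$-perfect code is $LC_q(e,k)$ for some $k$, up to this swap. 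So by part (i) the realizable values of $d$ are exactly $\{d_1/\gcd(d_1,k) : k \in \Z\}$, which is precisely the set of divisors of $d_1$. Hence $d \mid d_1$ is necessary.

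The main obstacle I anticipate is purely bookkeeping with gcd's in the Smith normal form computation for part (i) — keeping straight the chain $d_1 = \gcd(2e+1,t)$, $h_1 = (2e+1)/d_1$, $d_2 = \gcd(d_1,k)$, $h_2 = d_1/d_2$, and verifying that $a_1 = \gcd(t, kt/d_1)$ genuinely simplifies to $t/h_2$. The key identity is $\gcd(t, kt/d_1) = (t/d_1)\cdot\gcd(d_1, k) = (t/d_1) d_2 = t/h_2$, which holds because $d_1 \mid t$ lets one factor $t/d_1$ out of both arguments: $\gcd(t, kt/d_1) = (t/d_1)\gcd(d_1, k)$. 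Once this is pinned down, $a_2 = th_2$ follows from the determinant, and one must only double-check that $a_1 \mid a_2$ (clear, since $t/h_2 \mid th_2$) so that this is indeed the Smith normal form. The rest is direct. I should also make sure part (ii)'s reduction to codes of the form $LC_q(e,k)$ correctly accounts for type $1$ codes, but since transposing a generator matrix does not affect the Smith normal form, both types yield the same family of isomorphism classes.
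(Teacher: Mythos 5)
Your proof is correct, and for part (i) it takes a genuinely different route from the paper's. The paper proves (i) by invoking Lemma \ref{OtherGeneratorforLPL2DLemma} to replace $M$ by the lower-triangular generator matrix $N=\left(\begin{smallmatrix}(2e+1)h_2 & 0\\ (2e+1)k' & h_1d_2\end{smallmatrix}\right)$ and then applying the first isomorphism theorem to $T(x)=x\overline{N}$, whose kernel is computed directly to be $\frac{t}{h_2}\Z\times th_2\Z$; this avoids any Smith normal form computation but requires having built the unimodular change of basis $U$ in that lemma. You instead work with the original matrix $M$ of Theorem \ref{GeneratorForLPL2D} and compute the Smith normal form of $qM^{-1}=\left(\begin{smallmatrix}t & -kt/d_1\\ 0 & t\end{smallmatrix}\right)$ via the determinantal-divisor relations ($a_1$ is the gcd of the entries and $a_1a_2=\det$); your key identity $\gcd(t,kt/d_1)=(t/d_1)\gcd(d_1,k)=t/h_2$ is correct (it uses $d_1\mid t$ to factor $t/d_1$ out of both arguments), and you verify $t/h_2\mid th_2$, so the invariant factors are indeed $(t/h_2,th_2)$. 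This bypasses Lemma \ref{OtherGeneratorforLPL2DLemma} entirely and stays within the fact already recorded in the preliminaries that the Smith normal form of $qM^{-1}$ determines the group structure. For part (ii) the two arguments essentially coincide: the choice $k=d_1/d$ realizes $h_2=d$, and the only-if direction follows because every linear $(2,e,q)$-perfect code is, up to the coordinate swap (which preserves the isomorphism type), of the form $LC_q(e,k)$, so the realizable values of $d$ are exactly $\{d_1/\gcd(d_1,k):k\in\Z\}$, i.e.\ the divisors of $d_1$ --- you actually make this reduction more explicit than the paper does. One tiny wording slip: the swap isometry corresponds to permuting the columns of the generator matrix rather than transposing it, but either operation leaves the Smith normal form unchanged, so nothing is affected.
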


\begin{proof}
To prove (i) we consider the homomorphism $T:\Z^2 \rightarrow \Z_q^2$ given by $T(x)=x\overline{N}$, where $N$ is as in Lemma \ref{OtherGeneratorforLPL2DLemma}. We have that $\ker(T)=\frac{t}{h_2}\Z \times th_2\Z$ and by the referred lemma $\mbox{Im}(T)=LC_q(e,k)$, so (i) follows from the First group isomorphism theorem \cite[p.~307]{Fraleigh}. To prove (ii) we observe that for every $k$ we have $h_2 \mid \gcd(2e+1,t)$ and for $d \mid d_1$ where $d_1=\gcd(2e+1,t)$, then $LC_q(e,\frac{d_1}{d}) \simeq \Z_{{t}/{d}}\times \Z_{td}$.
\end{proof}

\begin{corollary}
There exists a two-dimensional perfect code $C\simeq \mathbb{Z}_{a}\times \mathbb{Z}_{b}$ with $a\mid b$ if and only if $ab$ is a perfect square and $b/a$ is an odd number.
\end{corollary}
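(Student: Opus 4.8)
The plan is to deduce this corollary directly from part (ii) of Theorem~\ref{Structure2DTh}, translating the combinatorial condition ``$d \mid \gcd(2e+1,t)$ is realizable'' into the clean number-theoretic condition on the pair $(a,b)$. First I would set up the correspondence: a two-dimensional perfect code $C \in LPL^{\infty}(2,e,q)$ with $q = (2e+1)t$ has $\#C = t^2$, and by the structure theorem for finite abelian groups (as already recalled in the paper) $C \simeq \Z_{t/d} \times \Z_{td}$ for a unique $d \mid t$. So writing $a = t/d$ and $b = td$, any group $\Z_a \times \Z_b$ arising from such a code satisfies $a \mid b$ and $ab = t^2$, a perfect square. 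Conversely, given $a \mid b$ with $ab$ a perfect square, one recovers $t = \sqrt{ab}$ (an integer since $ab$ is a square), $d = b/t = \sqrt{b/a}$ (an integer, again since $ab$ square and $a\mid b$ force $b/a = ab/a^2$ to be a square), and indeed $d \mid t$ because $d^2 = b/a \mid b \cdot (b/a) \cdot \dots$ — more cleanly, $d^2 = b/a$ divides $t^2 = ab$ with $\gcd$-type bookkeeping giving $d \mid t$. So the existence question for $\Z_a \times \Z_b$ reduces, via Theorem~\ref{Structure2DTh}(ii), to whether there is a valid choice of $e$ with $d \mid \gcd(2e+1, t)$, i.e.\ whether we may choose $2e+1$ to be a multiple of $d$.

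The key observation is that $e$ is \emph{not} fixed here: the corollary asks for the existence of \emph{a} two-dimensional perfect code isomorphic to $\Z_a \times \Z_b$, over any admissible alphabet. Since $q = (2e+1)t$ and only $t = \sqrt{ab}$ is pinned down, we are free to pick any odd number $2e+1 \geq 1$ we like. By Theorem~\ref{Structure2DTh}(ii) the realizable $d$'s for a given choice of $2e+1$ are exactly the divisors of $\gcd(2e+1,t)$. Hence $d = \sqrt{b/a}$ is realizable for \emph{some} $e$ if and only if there exists an odd $2e+1$ with $d \mid \gcd(2e+1,t)$, equivalently $d \mid t$ (which we have) \emph{and} $d$ divides some odd number — which is possible precisely when $d$ itself is odd, taking $2e+1 = d$ (note $2e+1 = d$ forces $d \mid \gcd(d,t) = d$, fine, and we need $t > 1$, i.e.\ $ab > 1$, which holds as $\#C>1$). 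So the condition becomes: $d = \sqrt{b/a}$ is an odd integer, i.e.\ $b/a$ is an odd perfect square. Since $b/a$ being a perfect square is already guaranteed (given $a \mid b$ and $ab$ square), the surviving constraint is simply that $b/a$ is odd.

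I would then package the two directions. Forward: if such a $C$ exists, $ab = t^2$ is a perfect square and $b/a = d^2$ where $d \mid \gcd(2e+1,t)$; since $2e+1$ is odd, $d$ is odd, so $b/a = d^2$ is odd. Backward: if $ab$ is a perfect square and $b/a$ is odd, set $t = \sqrt{ab}$, $d = \sqrt{b/a}$ (both integers, $d$ odd, $d \mid t$ since $d^2 = b/a$ and $a \mid b$ imply $d \mid t$ by comparing prime valuations — for each prime $p$, $v_p(d) = \tfrac12(v_p(b)-v_p(a)) \le \tfrac12(v_p(b)+v_p(a)) = v_p(t)$), take $e$ with $2e+1 = d$ so that $d \mid \gcd(2e+1,t)$, and invoke Theorem~\ref{Structure2DTh}(ii) to get $C = LC_q(e, d_1/d) \in LPL^{\infty}(2,e,q)$ with $C \simeq \Z_{t/d} \times \Z_{td} = \Z_a \times \Z_b$.

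The main obstacle, though it is minor, is bookkeeping the divisibility $d \mid t$ and the flexibility in $e$ correctly: one must be careful that the corollary quantifies existentially over \emph{all} valid triples $(n,e,q) = (2,e,q)$ rather than over a fixed $e$, since otherwise the ``odd'' condition would not appear. The prime-valuation argument for $d \mid t$ and the choice $2e+1 = d$ (which uses oddness of $d$ crucially) are the two places where the hypotheses are actually consumed; everything else is routine translation through Theorem~\ref{Structure2DTh}(ii) and the structure theorem. One should also note the edge case $a = b = 1$ is excluded by the standing assumption $\#C > 1$, so implicitly $ab > 1$ throughout.
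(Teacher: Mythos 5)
Your proposal is correct and follows essentially the same route as the paper: both directions go through Theorem~\ref{Structure2DTh}, the forward one reading off $a=t/h_2$, $b=th_2$ with $h_2\mid 2e+1$ odd, and the backward one choosing $2e+1=\sqrt{b/a}$ and $t=\sqrt{ab}$ so that $LC_q(e,1)\simeq\Z_a\times\Z_b$ (the paper writes this as $b=as$ with $s=(2e+1)^2$, which is the same construction). Your extra bookkeeping (prime valuations for $d\mid t$, the remark that $e$ is existentially quantified) is sound but not needed beyond what the paper does.
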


\begin{proof}
($\Rightarrow$) By Theorems \ref{GeneratorForLPL2D} and \ref{Structure2DTh}, if $\Z_{a}\times \Z_{b}\simeq C$ with $a|b$ for some perfect code $C$, then there exists integers $t,h_2$ and $e$ such that $a=\frac{t}{h_2},b=th_2$ and $h_2\mid 2e+1$ (in particular $h_2$ is odd), thus $ab=t^2$ is a perfect square and $\frac{b}{a}=h_2^2$ is odd. \\
($\Leftarrow$) Let $ab=t^2$ and $b=as$ with $s$ odd. Since $a^2s=t^2$ we have $s=(2e+1)^2$ and $(2e+1)a=t$. Defining $q=(2e+1)t$, by Theorem \ref{Structure2DTh} we have $LC_q(e,1)\simeq \Z_a \times \Z_b$.
\end{proof}

\begin{corollary}
Let $C \in LPL^{\infty}(2,e,q)$ with $q=(2e+1)t$. Then, $C\simeq \mathbb{Z}_{t}\times \mathbb{Z}_{t} \Leftrightarrow C$ is the cartesian code $C=(2e+1)\mathbb{Z}_{q}^2$.
\end{corollary}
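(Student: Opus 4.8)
The plan is to prove both implications, using the classification of two-dimensional perfect codes and the structure theorem already established. For the direction $(\Leftarrow)$, if $C=(2e+1)\Z_q^2$ is the cartesian code, then its generator matrix is $(2e+1)I_2 = \left(\begin{matrix} 2e+1 & 0 \\ 0 & 2e+1 \end{matrix}\right)$, so in the notation of Theorem \ref{GeneratorForLPL2D} we have $k=0$, hence $h_2 = \frac{\gcd(2e+1,t)}{\gcd(2e+1,t,0)} = \frac{\gcd(2e+1,t)}{\gcd(2e+1,t)} = 1$, and Theorem \ref{Structure2DTh}(i) gives $C \simeq \Z_{t/1}\times \Z_{t\cdot 1} = \Z_t \times \Z_t$.

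For the direction $(\Rightarrow)$, suppose $C \in LPL^{\infty}(2,e,q)$ with $C \simeq \Z_t \times \Z_t$. By Corollary \ref{2DisC1orC2} (or directly by Proposition \ref{2Disstandard}) the code $C$ is standard, so it is of type $1$ or type $2$; since the isometry $(x,y)\mapsto (y,x)$ exchanges these two families and fixes the cartesian code, we may assume without loss of generality that $C$ is of type $2$. Then by Theorem \ref{GeneratorForLPL2D} we have $C = LC_q(e,k)$ for some integer $k$, and by Theorem \ref{Structure2DTh}(i), $C \simeq \Z_{t/h_2}\times \Z_{th_2}$ with $h_2 = \frac{\gcd(2e+1,t)}{\gcd(2e+1,t,k)}$. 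The hypothesis $C \simeq \Z_t \times \Z_t$ forces $t/h_2 = t$ (comparing, say, the orders of the groups or the invariant factors — the smallest invariant factor of $\Z_t\times\Z_t$ is $t$, while that of $\Z_{t/h_2}\times\Z_{th_2}$ is $t/h_2$), hence $h_2 = 1$, i.e. $\gcd(2e+1,t) \mid k$. By Remark \ref{GeneratorForLPL2DRemark} we may replace $k$ by its residue modulo $d_1 = \gcd(2e+1,t)$ without changing the code, so $LC_q(e,k) = LC_q(e,0)$. It remains to identify $LC_q(e,0)$ with the cartesian code: its generator matrix is $\left(\begin{matrix} 2e+1 & 0 \\ 0 & 2e+1 \end{matrix}\right) = (2e+1)I_2$, whose rows span the lattice $(2e+1)\Z^2 = \Lambda_{(2e+1)\Z_q^2}$, so $C = (2e+1)\Z_q^2$.

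I expect no serious obstacle here: the statement is essentially a corollary of the preceding theorems, and the only point requiring a line of care is justifying that $\Z_{t/h_2}\times\Z_{th_2}\simeq\Z_t\times\Z_t$ implies $h_2=1$. This follows because the decomposition $\Z_{t/h_2}\times\Z_{th_2}$ with $(t/h_2)\mid (th_2)$ is already in invariant-factor form, and invariant factors are isomorphism invariants, so matching with $\Z_t\times\Z_t$ forces $t/h_2=t$ and $th_2=t$; alternatively one may simply note that $\Z_t\times\Z_t$ has exponent $t$ while $\Z_{t/h_2}\times\Z_{th_2}$ has exponent $th_2$, forcing $h_2=1$. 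The reduction to the type $2$ case via the swap isometry, and the fact that this isometry fixes the cartesian code, is what lets us invoke the generator-matrix classification cleanly.
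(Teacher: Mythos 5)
Your proof is correct and follows essentially the same route as the paper: both reduce to a type~$2$ code $LC_q(e,k)$ via Theorem \ref{GeneratorForLPL2D} and Remark \ref{GeneratorForLPL2DRemark}, and then read off from Theorem \ref{Structure2DTh}(i) that $C\simeq\Z_t\times\Z_t$ iff $h_2=1$ iff $d_1\mid k$ iff $k=0$ iff $C$ is cartesian. Your only addition is the explicit justification that the swap isometry fixes the cartesian code (so the reduction to type~$2$ is harmless) and that $h_2=1$ follows from comparing exponents or invariant factors, both of which the paper leaves implicit.
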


\begin{proof}
By Theorem \ref{GeneratorForLPL2D} and Remark \ref{GeneratorForLPL2DRemark} every code is of the form $C=LC_{q}(e,k)$ for some $k$ with $0\leq k < d_1$ and by Theorem \ref{Structure2DTh} we have $C \simeq \mathbb{Z}_{t} \times \mathbb{Z}_{t} \Leftrightarrow h_2=1 \Leftrightarrow d_1=d_2 \Leftrightarrow d_1 \mid k \Leftrightarrow 2e+1\mid kh_1 \Leftrightarrow k=0 \Leftrightarrow C=(2e+1)\mathbb{Z}_{q}^2$.
\end{proof}

\begin{corollary}\label{ExistsNonStandard2DCor}
There exists a linear two-dimensional $q$-ary perfect code $C$ that is non-cartesian if and only if $q=p^2a$ where $p$ is an odd prime number and $a$ is a positive integer.
\end{corollary}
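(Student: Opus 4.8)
The plan is to leverage the parametrization $C = LC_q(e,k)$ of type~$2$ perfect codes (Theorem~\ref{GeneratorForLPL2D}), normalized so that $0 \le k < d_1$ with $d_1 = \gcd(2e+1,t)$ (Remark~\ref{GeneratorForLPL2DRemark}), the fact that the cartesian code corresponds to $k=0$ (equivalently, is the unique two-dimensional perfect code isomorphic to $\Z_t\times\Z_t$, by the preceding corollary), and the group-structure formula of Theorem~\ref{Structure2DTh}(i).

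For the forward implication I would argue as follows. Given a non-cartesian $C \in LPL^{\infty}(2,e,q)$, first reduce to the type~$2$ case: every two-dimensional perfect code is standard (Proposition~\ref{2Disstandard}), hence of type~$1$ or type~$2$, and the coordinate-swap isometry $\theta(x,y)=(y,x)$ exchanges the two types while fixing the cartesian code $(2e+1)\Z_q^2$, so replacing $C$ by $\theta(C)$ if necessary costs nothing. Then $C = LC_q(e,k)$ with $0\le k<d_1$, and non-cartesianness forces $k\neq 0$, hence $d_1>1$. Since $d_1 \mid 2e+1$ it is odd, so it has an odd prime divisor $p$; from $p\mid 2e+1$ and $p\mid t$ one gets $p^2 \mid (2e+1)t = q$, i.e. $q = p^2a$ with $a = q/p^2$ a positive integer.

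For the converse, given $q = p^2a$ with $p$ an odd prime and $a\ge 1$, I would exhibit an explicit non-cartesian code: take $e = (p-1)/2$ (so $2e+1 = p \ge 3$) and $t = pa > 1$, which is an admissible pair with $q = (2e+1)t$ and $d_1 = \gcd(p,pa) = p > 1$; then $C := LC_q(e,1)$ exists by Theorem~\ref{GeneratorForLPL2D}, and Theorem~\ref{Structure2DTh}(i) (with $h_2 = d_1/\gcd(d_1,1) = p$) gives $C \simeq \Z_{t/p}\times\Z_{tp}$, which is not isomorphic to $\Z_t\times\Z_t$ since $p>1$; by the preceding corollary this means $C$ is not the cartesian code.

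The argument is short, and the only place deserving attention is the opening reduction of the forward direction: one must be sure that ``non-cartesian'' is insensitive to the coordinate swap and that appealing to the standard/type-$2$ classification is legitimate. Once we are in the regime $C = LC_q(e,k)$, everything reduces to a one-line divisibility manipulation, so I do not expect a genuine obstacle here.
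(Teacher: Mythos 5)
Your proposal is correct and follows essentially the same route as the paper: both reduce the question to whether $\gcd(2e+1,t)>1$ for some admissible factorization $q=(2e+1)t$, and then translate that condition into $q=p^2a$ with $p$ an odd prime. The paper invokes Theorem~\ref{Structure2DTh}(ii) directly where you instead pass through the parametrization $LC_q(e,k)$ and part (i), but this is only a cosmetic difference in which consequence of the classification is cited.
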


\begin{proof}
By Theorem \ref{Structure2DTh} part (ii), there exists a $q$-ary non-cartesian perfect code if and only if $q=(2e+1)t$ for some integers $e$ and $t$ such that $\gcd(2e+1,t)>1$. This last condition is equivalent to $2e+1=pm$ and $t=pn$ for some odd prime $p$ and $m,n \in Z^{+}$, thus $q=p^2a$ where $p$ is an odd prime and $a$ is a positive integer. 
\end{proof}

\begin{example}
The first value of $q$ for which there exists a two-dimensional $q$-ary perfect code that is neither cartesian nor cyclic is $3^2\cdot 2$. An example of such code has generators $\{(0,9),(1,3)\}\subseteq \mathbb{Z}_{18}^2$, see Figure \ref{ExZ2xZ18}.
\end{example}

\begin{figure}[h]
  \begin{center}  
    \includegraphics[width=0.65\textwidth,natwidth=452,natheight=342]{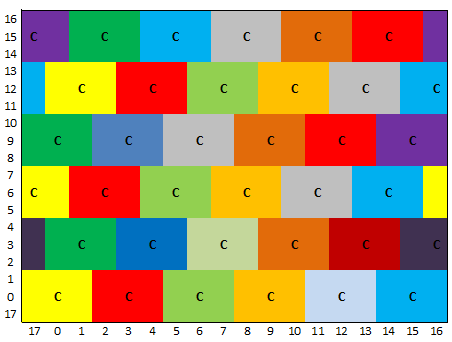}
  \caption{For $q=3^2\cdot2$, the perfect code $C=\langle (0,9),(1,3)\rangle \subseteq \mathbb{Z}_{18}^2$ is isomorphic to $\mathbb{Z}_2 \times \mathbb{Z}_{18}$.}
  \label{ExZ2xZ18}
  \end{center}
\end{figure}

\begin{corollary}\label{qCyclic2DCor}
There exists a two-dimensional cyclic $q$-ary perfect code if and only if $q=p^2a$ where $p$ is an odd prime number and $a$ is an odd positive integer.
\end{corollary}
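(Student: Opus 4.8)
The plan is to combine the two-dimensional structure results already established with a divisibility analysis of when a cyclic group can arise. By Theorem~\ref{Structure2DTh}(ii), an isomorphism class $\Z_{t/d}\times\Z_{td}$ is represented by some code in $LPL^{\infty}(2,e,q)$ exactly when $d\mid\gcd(2e+1,t)$. A code $C\in LPL^{\infty}(2,e,q)$ is cyclic precisely when $C\simeq\Z_{t/d}\times\Z_{td}$ is a cyclic group, which by the structure theorem for finite abelian groups happens if and only if $\gcd(t/d,td)=1$, equivalently $t/d=1$, i.e. $d=t$. So the first step is to observe that a two-dimensional cyclic $q$-ary perfect code exists if and only if there is a factorization $q=(2e+1)t$ with $t>1$ and $t\mid\gcd(2e+1,t)$; but $t\mid\gcd(2e+1,t)$ forces $t\mid 2e+1$, so the condition becomes: $q=(2e+1)t$ with $t\mid(2e+1)$ and $t>1$.

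Next I would translate this arithmetic condition on the pair $(2e+1,t)$ into a condition on $q$ alone. Write $2e+1=tu$ for a positive integer $u$ (this is exactly $t\mid 2e+1$), so that $q=(2e+1)t=t^{2}u$ with $t>1$. Since $2e+1$ is odd, both $t$ and $u$ are odd; conversely any $q$ of the form $q=t^{2}u$ with $t>1$ odd and $u$ odd yields $e$ via $2e+1=tu$ and gives a valid parameter triple. Hence a two-dimensional cyclic $q$-ary perfect code exists iff $q$ can be written as $t^{2}u$ with $t>1$ odd and $u$ odd, i.e. iff $q$ is odd and has an odd square factor $t^{2}>1$ in its decomposition. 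The final step is to package ``$q$ odd with a nontrivial odd square factor'' in the stated form $q=p^{2}a$ with $p$ an odd prime and $a$ an odd positive integer: if $q=t^{2}u$ with $t>1$ odd, pick any prime $p\mid t$ (necessarily odd) and set $a=q/p^{2}$, which is a positive integer since $p^{2}\mid t^{2}\mid q$, and $a$ is odd since $q$ is; conversely $q=p^{2}a$ with $p$ odd and $a$ odd is of the form $t^{2}u$ with $t=p>1$, $u=a$.

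I do not expect a serious obstacle here; the result is essentially a corollary of Theorem~\ref{Structure2DTh} together with the observation that the cyclic case is the extreme case $d=t$ of that theorem. The only point requiring a little care is the equivalence between ``$\exists\,t>1$ with $t\mid 2e+1$ and $q=(2e+1)t$'' and the clean statement ``$q=p^{2}a$ with $p$ odd prime, $a$ odd'': one must check that oddness propagates correctly (it does, since $2e+1$ is odd forces $t$ and $u=(2e+1)/t$ odd, hence $q=t^{2}u$ is odd) and that a single prime square $p^{2}$ suffices rather than the full $t^{2}$ (it does, by the factor-extraction argument above, and one should note the two forms are equivalent as conditions on $q$ even though they need not use the same $t$). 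A remark worth adding for contrast: Corollary~\ref{ExistsNonStandard2DCor} characterizes noncartesian codes by $q=p^{2}a$ with $p$ odd prime and $a$ \emph{any} positive integer, so the cyclic case is exactly the noncartesian case restricted to odd~$q$.
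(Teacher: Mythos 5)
Your proposal is correct and follows essentially the same route as the paper: both reduce the question via Theorem~\ref{Structure2DTh}(ii) to the existence of a factorization $q=(2e+1)t$ with $t>1$ and $\gcd(2e+1,t)=t$ (equivalently $t\mid 2e+1$), and then rewrite $q=t^{2}u$ with $t,u$ odd in the form $q=p^{2}a$. Your closing observation relating the cyclic case to Corollary~\ref{ExistsNonStandard2DCor} for odd $q$ is a correct and worthwhile addition, but the core argument is the paper's.
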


\begin{proof}
By Theorem \ref{Structure2DTh} part (ii), there exists a $q$-ary cyclic perfect code if and only if $q=(2e+1)t$ for some integers $e$ and $t$ such that $\gcd(2e+1,t)=t>1$. This last condition is equivalent to $2e+1=mt$ for some odd integer $m$, thus $q=mt^2$ where $a$ is an odd integer and $t>1$ which is equivalent to $q=ap^2$ where $a$ is an odd integer and $p$ is an odd prime number.
\end{proof}

\begin{corollary}
Let $q=(2e+1)t$. There exists a cyclic code in $LPL^{\infty}(2,e,q)$ if and only if $t\mid 2e+1$. Under this condition $LC_q(e,k)$ is cyclic if and only if $\gcd(k,t)=1$.
\end{corollary}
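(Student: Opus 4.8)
The plan is to reduce everything to the group-structure classification already in hand and then simply read off when $\mathbb{Z}_{t/h_2}\times\mathbb{Z}_{th_2}$ is cyclic. First I would recall from Proposition \ref{2Disstandard} that every code in $LPL^{\infty}(2,e,q)$ is of type $1$ or of type $2$, and that the swap isometry $(x,y)\mapsto(y,x)$ is in particular a group isomorphism carrying type-$1$ codes onto type-$2$ codes. Since being cyclic is a group-theoretic invariant, a cyclic code exists in $LPL^{\infty}(2,e,q)$ if and only if some type-$2$ code is cyclic, and by Theorem \ref{GeneratorForLPL2D} together with Remark \ref{GeneratorForLPL2DRemark} every type-$2$ code is $LC_q(e,k)$ for some integer $k$. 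Thus the entire statement is equivalent to deciding for which $k$ the code $LC_q(e,k)$ is cyclic.

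Next I would invoke Theorem \ref{Structure2DTh}(i), namely $LC_q(e,k)\simeq\mathbb{Z}_{t/h_2}\times\mathbb{Z}_{th_2}$ with $h_2=\frac{\gcd(2e+1,t)}{\gcd(2e+1,t,k)}$, and record the elementary divisibility chain $h_2\mid d_1\mid t$, where $d_1=\gcd(2e+1,t)$. In particular $t/h_2$ divides $t$, hence divides $th_2$, so $\gcd(t/h_2,\,th_2)=t/h_2$. Consequently $\mathbb{Z}_{t/h_2}\times\mathbb{Z}_{th_2}$ is cyclic if and only if $t/h_2=1$, i.e.\ if and only if $h_2=t$ (rather than the coprimality condition one might naively expect).

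Now $h_2=t$ forces $d_1=t$ because $h_2\mid d_1\mid t$; that is, $t\mid 2e+1$. Conversely, when $t\mid 2e+1$ we have $d_1=t$ and $h_2=t/\gcd(t,k)$, so $h_2=t\iff\gcd(t,k)=1$. Combining these: $LC_q(e,k)$ is cyclic if and only if $t\mid 2e+1$ and $\gcd(k,t)=1$, which is exactly the "under this condition" claim. For the existence statement, the condition $t\mid 2e+1$ does not involve $k$, and when it holds the choice $k=1$ satisfies $\gcd(1,t)=1$, so $LC_q(e,1)$ is a cyclic perfect code; conversely if $t\nmid 2e+1$ then $d_1<t$, whence $h_2\le d_1<t$ for every $k$ and no $LC_q(e,k)$ is cyclic. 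This establishes both halves of the corollary.

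I do not anticipate a real obstacle: the argument rests entirely on the already-proved structure formula of Theorem \ref{Structure2DTh}, and the only point demanding a little care is the bookkeeping of the nested gcd's together with the observation that $t/h_2\mid th_2$, which is what makes cyclicity collapse to $t/h_2=1$.
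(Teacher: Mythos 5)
Your proof is correct and follows essentially the same route as the paper: both reduce the question to the group-structure formula $LC_q(e,k)\simeq\mathbb{Z}_{t/h_2}\times\mathbb{Z}_{th_2}$ of Theorem \ref{Structure2DTh} and observe that cyclicity forces $h_2=t$, hence $\gcd(2e+1,t)=t$ and $\gcd(t,k)=1$. Your version merely spells out a few steps the paper leaves implicit (the reduction to type-$2$ codes and the computation $\gcd(t/h_2,th_2)=t/h_2$), which is fine.
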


\begin{proof}
By Theorem \ref{Structure2DTh} part (ii), there exists a cyclic code in $LPL^{\infty}(2,e,q)$ if and only if $\gcd(2e+1,t)=t$ if and only if $t\mid 2e+1$. In this case, by Theorem \ref{Structure2DTh} part (i), we have $LP_{q}(e,k)\equiv \Z_{t^2} \Leftrightarrow h_2=t \Leftrightarrow$ and $\gcd(2e+1,t,k)=\gcd(t,k)=1$.
\end{proof}

\subsection{Isometry and isomorphism classes of two-dimensional perfect codes}
Since every linear $(2,e,q)$-perfect code is isometric to an type $2$ linear perfect code we can restrict to $LPL^{\infty}(2,e,q)_{o}$. Our main result here is a parametrization of the set $LPL^{\infty}(2,e,q)_{o}$ by the ring $\Z_{d_1}$ (where $d_1=\gcd(2e+1,t)$) in such a way that isometry classes and isomorphism classes correspond to certain generalized cosets.\\

The following lemma can be obtained from the Chinese remainder theorem.

\begin{lemma}\label{InvisibleLemma} There exist $u\in \mathbb{Z}_d^{*}$ such that $a \equiv u b \pmod{d} \Leftrightarrow \gcd(a,d)=\gcd(b,d)$.
\end{lemma}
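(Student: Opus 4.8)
The statement is a clean characterization: $a$ and $b$ differ by a unit mod $d$ if and only if they generate the same ideal, i.e. $\gcd(a,d)=\gcd(b,d)$. The plan is to prove both implications directly, using the Chinese Remainder Theorem to reduce to prime powers where everything is transparent.

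First I would dispose of the easy direction. Suppose $a\equiv ub\pmod d$ with $u\in\Z_d^*$. Then $\gcd(a,d)=\gcd(ub,d)$, and since $u$ is a unit mod $d$ we have $\gcd(u,d)=1$, so $\gcd(ub,d)=\gcd(b,d)$; hence $\gcd(a,d)=\gcd(b,d)$. (One should be slightly careful that $\gcd$ here means the gcd of integer representatives with $d$, which is well defined on $\Z_d$ since adding a multiple of $d$ doesn't change it; this is routine.)

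For the converse, write $d=\prod_{i=1}^{r}p_i^{\alpha_i}$ and invoke CRT: $\Z_d\cong\prod_i\Z_{p_i^{\alpha_i}}$ as rings, and correspondingly $\Z_d^*\cong\prod_i\Z_{p_i^{\alpha_i}}^*$. Suppose $\gcd(a,d)=\gcd(b,d)$. Then for each $i$, the exact power of $p_i$ dividing $a$ and dividing $b$ agree up to the cutoff $\alpha_i$; that is, writing $v_{p_i}$ for the $p_i$-adic valuation, $\min(v_{p_i}(a),\alpha_i)=\min(v_{p_i}(b),\alpha_i)=:\beta_i$. Working in each factor $\Z_{p_i^{\alpha_i}}$ separately, I claim there is a local unit $u_i$ with $a\equiv u_ib\pmod{p_i^{\alpha_i}}$: if $\beta_i=\alpha_i$ then $a\equiv b\equiv 0$ and take $u_i=1$; otherwise write $a=p_i^{\beta_i}a'$, $b=p_i^{\beta_i}b'$ with $a',b'$ coprime to $p_i$, so $b'$ is invertible mod $p_i^{\alpha_i-\beta_i}$ and hence (after lifting) mod $p_i^{\alpha_i}$, and $u_i=a'(b')^{-1}$ works — note $u_i$ is coprime to $p_i$, hence a unit mod $p_i^{\alpha_i}$. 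Now reassemble via CRT: the tuple $(u_1,\dots,u_r)$ corresponds to some $u\in\Z_d^*$, and $a\equiv ub\pmod{p_i^{\alpha_i}}$ for all $i$ forces $a\equiv ub\pmod d$.

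The only mildly delicate point — what I'd flag as the crux — is the local step where one must produce the unit $u_i$ even when $\beta_i>0$ (i.e. $p_i$ genuinely divides both $a$ and $b$): one cannot just invert $b$ mod $p_i^{\alpha_i}$, but only $b'=b/p_i^{\beta_i}$, and one must check the quotient $a'/b'$ is still a unit mod the full $p_i^{\alpha_i}$ (it is, being coprime to $p_i$). Everything else is bookkeeping with CRT. As an alternative to the prime-power reduction one can argue more directly: set $g=\gcd(a,d)=\gcd(b,d)$, write $a=ga'$, $b=gb'$, $d=gd'$ with $\gcd(a',d')=\gcd(b',d')=1$, find $x$ with $x\equiv a'(b')^{-1}\pmod{d'}$ and $x\equiv 1\pmod{g/\gcd(g,d')}$-type adjustments, but the CRT-by-primes version is cleanest and I would present that.
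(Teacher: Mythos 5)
Your proof is correct, and it follows exactly the route the paper indicates: the paper states only that the lemma ``can be obtained from the Chinese remainder theorem'' and gives no details, while you supply precisely that CRT argument (easy direction via $\gcd(ub,d)=\gcd(b,d)$ for a unit $u$, converse by reducing to prime powers, stripping off $p_i^{\beta_i}$, and inverting the unit part locally before reassembling). The local step you flag as the crux is handled correctly, so nothing is missing.
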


\begin{theorem}\label{parametrizationTheorem}
Let $q=(2e+1)t, d_1=\gcd(2e+1,t)$ and $h_1= \frac{2e+1}{d_1}$. We have the parametrization (bijection):
$$ \psi: \mathbb{Z}_{d_1} \rightarrow LPL^{\infty}(2,e,q)_o$$
$$  k+d_1\mathbb{Z} \mapsto LC_q(e,k), \qquad$$ which induces the parametrizations:
$$ \psi_\mathcal{G}: \frac{\mathbb{Z}_{d_1}}{\{1,-1\}} \rightarrow LPL^{\infty}(2,e,q)_o/\mathcal{G}$$
$$  k\cdot\{1,-1\}\mapsto [\psi(k)]_{\mathcal{G}}, \qquad\quad$$
and
$$ \psi_\mathcal{A}: \frac{\mathbb{Z}_{d_1}}{\mathbb{Z}_{d_1}^{*}} \rightarrow LPL^{\infty}(2,e,q)_o/\mathcal{A}$$
$$  k\cdot\mathbb{Z}_{d_1}^{*}\mapsto [\psi(k)]_{\mathcal{A}}, \qquad\quad$$
\end{theorem}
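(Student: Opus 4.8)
The plan is to establish the three maps in sequence: first the base parametrization $\psi$, then deduce the quotient parametrizations $\psi_{\mathcal{G}}$ and $\psi_{\mathcal{A}}$ from it by identifying exactly when two codes $LC_q(e,k)$ and $LC_q(e,k')$ are geometrically (resp. algebraically) equivalent.

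\textbf{Step 1: The bijection $\psi$.} By Theorem~\ref{GeneratorForLPL2D} every type-$2$ code in $LPL^{\infty}(2,e,q)_o$ equals $LC_q(e,k)$ for some $k\in\Z$, and by Remark~\ref{GeneratorForLPL2DRemark} we have $LC_q(e,k)=LC_q(e,k_0)$ whenever $k\equiv k_0\pmod{d_1}$; hence $\psi$ is well defined on $\Z_{d_1}$ and surjective. For injectivity I would show that if $LC_q(e,k)=LC_q(e,k')$ as subsets of $\Z_q^2$ then $k\equiv k'\pmod{d_1}$: the codeword $(\overline{2e+1},\overline{kh_1})$ of the first code must lie in the second, and since all codewords of $LC_q(e,k')$ with first coordinate $\overline{2e+1}$ differ from $(\overline{2e+1},\overline{k'h_1})$ by a multiple of $(\overline{0},\overline{2e+1})$, one gets $kh_1\equiv k'h_1\pmod{2e+1}$, i.e.\ $k\equiv k'\pmod{d_1}$ (using $h_1=(2e+1)/d_1$ and $\gcd(h_1,d_1)$ dividing out correctly). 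This makes $\psi$ a bijection.

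\textbf{Step 2: The isometry quotient $\psi_{\mathcal{G}}$.} For $q>3$ the isometry group is $\mathcal{G}=\{\theta\eta_a\}$; restricting to type-$2$ codes we cannot use the swap $\theta=(1\,2)$ (it would produce a type-$1$ code, unless the code is cartesian), so the only isometries preserving $LPL^{\infty}(2,e,q)_o$ and fixing $0$ are the sign changes $\eta_{(1,0)},\eta_{(0,1)},\eta_{(1,1)}$. I would compute the effect of each on a generator matrix: $\eta_{(0,1)}$ sends $\left(\begin{smallmatrix}2e+1 & kh_1\\ 0 & 2e+1\end{smallmatrix}\right)$ to a generator matrix of $LC_q(e,-k)$, while $\eta_{(1,0)}$ gives the same effect after a row operation, and $\eta_{(1,1)}$ acts trivially; translations only move $0$ and don't change the isomorphism-with-$\psi$ picture since every code in the orbit that is again type-$2$ with $0$ in it is captured by $\{k,-k\}$. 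Thus $LC_q(e,k)\simg LC_q(e,k')$ iff $k'\equiv\pm k\pmod{d_1}$, giving the well-defined bijection $\psi_{\mathcal G}$ on $\Z_{d_1}/\{1,-1\}$. (The small cases $q\le 3$ are vacuous here since $d_1=\gcd(2e+1,t)>1$ forces $q\ge 9$.)

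\textbf{Step 3: The isomorphism quotient $\psi_{\mathcal{A}}$.} Here I would use Theorem~\ref{Structure2DTh}(i): $LC_q(e,k)\simeq\Z_{t/h_2}\times\Z_{th_2}$ with $h_2=d_1/\gcd(d_1,k)$. Two codes are algebraically equivalent iff there is a group automorphism of $\Z_q^2$ carrying one onto the other; since the abstract group type is determined by $h_2$, a necessary condition is $\gcd(d_1,k)=\gcd(d_1,k')$, and by Lemma~\ref{InvisibleLemma} this is equivalent to $k'\equiv uk\pmod{d_1}$ for some $u\in\Z_{d_1}^{*}$, i.e.\ to $k\Z_{d_1}^{*}=k'\Z_{d_1}^{*}$. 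For the converse (sufficiency) I would exhibit, for each unit $u$, an automorphism of $\Z_q^2$ sending $LC_q(e,k)$ to $LC_q(e,uk)$ — concretely, lift $u$ to an integer coprime to $q$ and use the automorphism induced by a suitable integer matrix with determinant a unit mod $q$ that conjugates one generator matrix into the other. This shows $LC_q(e,k)\sima LC_q(e,k')$ iff $k\Z_{d_1}^{*}=k'\Z_{d_1}^{*}$, so $\psi_{\mathcal A}$ is a well-defined bijection on $\Z_{d_1}/\Z_{d_1}^{*}$.

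\textbf{Main obstacle.} The routine part is Step~1; the delicate part is the \emph{sufficiency} directions in Steps~2 and~3 — namely verifying that the candidate isometry (resp.\ automorphism) actually maps one code onto the other, which amounts to checking that a specific integer change-of-basis matrix conjugates the generator matrix of $LC_q(e,k)$ to that of $LC_q(e,\pm k)$ (resp.\ $LC_q(e,uk)$) modulo $q$. Lemma~\ref{OtherGeneratorforLPL2DLemma} supplies the alternative generator matrix that makes the $\Z_{d_1}^{*}$-action transparent, so the computation should reduce to a short matrix identity; still, keeping track of the interplay between the three gcd's $d_1,\gcd(d_1,k),h_2$ and ensuring the constructed matrix has determinant invertible mod $q$ is where the real care is needed.
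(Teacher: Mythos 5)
Your proposal is correct and follows essentially the same route as the paper: injectivity of $\psi$ by testing membership of a generator of one code in the other, the sign-change isometries $\eta_a$ realizing $k\mapsto -k$ for $\psi_{\mathcal{G}}$, and Theorem~\ref{Structure2DTh} together with Lemma~\ref{InvisibleLemma} for $\psi_{\mathcal{A}}$. The two points you single out as delicate --- ruling out the coordinate swap among the isometries, and exhibiting an ambient automorphism of $\Z_q^2$ (e.g.\ a diagonal one $(x,y)\mapsto(\tilde u x,y)$ with $\tilde u$ a unit mod $q$) realizing $k\mapsto uk$ rather than merely an abstract group isomorphism --- are in fact passed over silently in the paper's proof, so your extra care there is warranted rather than a detour.
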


\begin{proof}
By Theorem \ref{GeneratorForLPL2D} and Remark \ref{GeneratorForLPL2DRemark} the map $\psi$ is well defined and is a surjection, so it remains to prove that $LC_q(e,k_1)=LC_q(e,k_2) \Leftrightarrow k_1 \equiv k_2 \pmod{d_1}$. Since both codes have the same cardinality $t^2$, we have
$$LC_q(e,k_1)=LC_q(e,k_2) \Leftrightarrow LC_q(e,k_1) \subseteq LC_q(e,k_2) \Leftrightarrow (h_1k_1,2e+1)\in LC_q(e,k_2)$$ $$\Leftrightarrow \exists x,y, \in \mathbb{Z} : \left\{ \begin{array}{l}
(2e+1)x+h_1 k_2 y \equiv h_1 k_1 \pmod{q} \\ (2e+1)y\equiv 2e+1 \pmod{q}
\end{array} \right.$$ $$\Leftrightarrow \exists x,y, \in \mathbb{Z} : \left\{ \begin{array}{l}
y\equiv1\pmod{t}\\ d_1x+k_2y\equiv k_1\pmod{td_1}
\end{array} \right. \Rightarrow \exists y \in \mathbb{Z}: \left\{\begin{array}{l}
y\equiv 1 \pmod{d_1} \\ k_2 y \equiv k_1 \pmod{d_1}
\end{array} \right.$$  which implies $k_1 \equiv k_2 \pmod{d_1}$.\\

Let $\eta_i: \mathbb{Z}_q^2 \rightarrow \mathbb{Z}_q^2$ for $i=1,2$ given by $\eta_1(x,y)=(-x,y)$ and $\eta_2(x,y)=(x,-y)$. We have that $\eta_1(LC_q(e,k))=\langle (-(2e+1),0),(-kh_1,2e+1) \rangle = LC_q(e,-k)$ and the same is valid for $\eta_2$, thus $[\psi(k)]_{\mathcal{G}}=\{\psi(-k),\psi(k)\}$ and so $\psi_{\mathcal{G}}$ is well defined and is a bijection. By Theorem \ref{Structure2DTh} we have $\psi(k)=LC_q(e,k)\simeq \mathbb{Z}_{t/h_2}\times \mathbb{Z}_{th_2}$ where $h_2 = \frac{d_1}{\gcd(d_1,k)}$, therefore $[\psi(k_1)]_{\mathcal{A}}=[\psi(k_2)]_{\mathcal{A}} \Leftrightarrow \gcd(k_1,d_1)=\gcd(k_2,d_1)$ and so $k_1 \equiv u k_2 \pmod{d_1}$ for some $u\in\mathbb{Z}$ with $gcd(u,d_1)=1$ (Lemma \ref{InvisibleLemma}), which is equivalent to $k_1 \mathbb{Z}_{d_1}^{*}=k_2\mathbb{Z}_{d_1}^{*}$.
\end{proof}

\begin{example} Let $p>2$ be a prime number and we take $q=p^2$ and $e\geq1$ such that $2e+1=p$. In this case $d_1=p$ and we have exactly $p$ codes in $LPL^{\infty}(2,p,p^2)$ given by $LC_{p^2}(p,k)$ for $0\leq k <p$, where the code $LC_{p^2}(p,k)$ has generator matrix $M_k =\left( \begin{array}{cc}
p & 0 \\ k & p \end{array}  \right) \in \mathcal{M}_{2\times 2}(\mathbb{Z}_{p^2})$. There exist exactly $\frac{p+1}{2}$ of such perfect codes up to isometry, given by $LC_{p^2}(p,k)$ for $0\leq k \leq \frac{p-1}{2}$. Since $p$ is prime, we have $\mathbb{Z}_{p}=\{0\}\uplus \mathbb{Z}_p^{*}$, so there exist exactly $2$ perfect codes in $LPL^{\infty}(2,p^2,p)$ up to isomorphism, one of which is the cartesian code (which corresponds to $k=0$) and the other is $LC_{p^2}(p,1)$ (which is isomorphic to $LC_{p^2}(p,k)$ for $1<k<p$).
\end{example}

\begin{corollary}
The set $\{LC_q(e,k): 0\leq k \leq \frac{d_1-1}{2}\}$ is a set of representative of $LPL^{\infty}(2,e,q)/\mathcal{G}$ and the set $\{LC_q(e,k): \gcd(k,d_1)=1\textrm{, } 1\leq k \leq d_1 \}$ is a set of representative of $LPL^{\infty}(2,e,q)/\mathcal{A}$.
\end{corollary}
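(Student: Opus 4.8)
The plan is to deduce both assertions from the two bijections in Theorem \ref{parametrizationTheorem}. First I would note that the flip $\pi(x,y)=(y,x)$ is at once an isometry and a group automorphism of $\mathbb{Z}_q^2$, and that by Corollary \ref{2DisC1orC2} every linear $(2,e,q)$-perfect code is of type $1$ or of type $2$, the two types being interchanged by $\pi$; hence the quotients $LPL^{\infty}(2,e,q)/\mathcal{G}$ and $LPL^{\infty}(2,e,q)/\mathcal{A}$ coincide with the quotients of the type $2$ subset $LPL^{\infty}(2,e,q)_o$ treated in that theorem. Consequently a set of representatives for each quotient is precisely the $\psi$-image of a transversal for the corresponding generalized-coset quotient of $\mathbb{Z}_{d_1}$, and the whole problem reduces to elementary counting in $\mathbb{Z}_{d_1}$.

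For the isometry classes the relevant quotient is $\mathbb{Z}_{d_1}/\{1,-1\}$, whose orbits are the sets $\{k,-k\}=\{k,d_1-k\}$. Since $d_1=\gcd(2e+1,t)$ divides the odd integer $2e+1$, the modulus $d_1$ is odd, so $2$ is invertible modulo $d_1$ and the map $k\mapsto -k$ fixes only $k=0$. I would then check that $\{0,1,\ldots,\frac{d_1-1}{2}\}$ meets every orbit exactly once: every residue of $\mathbb{Z}_{d_1}$ equals $k$ or $-k$ for some $k$ in this range, while two elements $k,k'$ of the range satisfy $k\equiv -k'$ only when $d_1\mid(k+k')$, which forces $k=k'=0$ because $0\le k+k'\le d_1-1$. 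Transporting this transversal through $\psi_{\mathcal{G}}$ then yields that $\{LC_q(e,k):0\le k\le\frac{d_1-1}{2}\}$ represents $LPL^{\infty}(2,e,q)/\mathcal{G}$, which settles the first half.

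For the isomorphism classes the relevant quotient is $\mathbb{Z}_{d_1}/\mathbb{Z}_{d_1}^{*}$. By Lemma \ref{InvisibleLemma} one has $k_1\mathbb{Z}_{d_1}^{*}=k_2\mathbb{Z}_{d_1}^{*}$ if and only if $\gcd(k_1,d_1)=\gcd(k_2,d_1)$, so the classes are indexed by the attainable values of $\gcd(k,d_1)$, that is by the divisors of $d_1$; moreover by Theorem \ref{Structure2DTh} the isomorphism type of $LC_q(e,k)$ depends only on $h_2=d_1/\gcd(k,d_1)$, confirming that distinct divisor-values give distinct $\mathcal{A}$-classes. A correct transversal of $LPL^{\infty}(2,e,q)/\mathcal{A}$ must therefore contain, for each value $g\mid d_1$, exactly one index $k\in\{1,\ldots,d_1\}$ with $\gcd(k,d_1)=g$, and pushing any such choice through $\psi_{\mathcal{A}}$ produces the representative system.

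The main obstacle lies exactly in matching this to the printed set $\{LC_q(e,k):\gcd(k,d_1)=1,\ 1\le k\le d_1\}$. Every code in that set has $\gcd(k,d_1)=1$, hence $h_2=d_1$ and $LC_q(e,k)\simeq\mathbb{Z}_{t/d_1}\times\mathbb{Z}_{td_1}$, so by the equivalence just described these codes all represent one and the same $\mathcal{A}$-class. Read literally the set is thus contained in a single class and misses every class arising from a proper divisor $g>1$ of $d_1$, so it is a full transversal only in the degenerate case $d_1=1$. The delicate point in the proof is therefore to reconcile the literal index condition with a genuine system of representatives: one must select one $k$ for each value of $\gcd(k,d_1)$ (equivalently one $k$ per divisor of $d_1$) and then verify, through the gcd map and Lemma \ref{InvisibleLemma}, that this choice covers each $\mathcal{A}$-class exactly once. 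With the indexing understood in this corrected sense, the remaining verification is the routine transversal count of the previous paragraph.
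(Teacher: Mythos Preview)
Your approach is exactly the one implicit in the paper (which gives no explicit proof of this corollary): read off transversals for $\mathbb{Z}_{d_1}/\{1,-1\}$ and $\mathbb{Z}_{d_1}/\mathbb{Z}_{d_1}^{*}$ and push them through the bijections $\psi_{\mathcal{G}}$ and $\psi_{\mathcal{A}}$ of Theorem~\ref{parametrizationTheorem}. Your treatment of the first half is clean and complete; the observation that $d_1$ is odd (so $k\mapsto -k$ has only the fixed point $0$) is precisely what is needed.

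More importantly, you have caught a genuine misprint in the second half of the stated corollary. As you argue, every $k$ with $\gcd(k,d_1)=1$ gives $h_2=d_1$ and hence $LC_q(e,k)\simeq\mathbb{Z}_{t/d_1}\times\mathbb{Z}_{td_1}$ by Theorem~\ref{Structure2DTh}, so the printed set lies in a single $\mathcal{A}$-class and cannot be a system of representatives unless $d_1=1$. The paper's own Corollary~\ref{NumberOfCodes2DCor} confirms this: it counts $\sigma_0(d_1)$ isomorphism classes via the decomposition $\mathbb{Z}_{d_1}=\biguplus_{d\mid d_1} d\,\mathbb{Z}_{d_1}^{*}$, which shows that the intended transversal is one index per divisor of $d_1$, e.g.\ $\{LC_q(e,d): d\mid d_1\}$. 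The condition ``$\gcd(k,d_1)=1$'' is almost certainly a slip for ``$k\mid d_1$''. Your diagnosis and your proposed correction are both right.
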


\begin{corollary}\label{NumberOfCodes2DCor}
There exist exactly $d_1=\gcd(2e+1,t)$ codes in $LPL^{\infty}(2,e,q)$ where $q=(2e+1)t$. There exist exactly $\frac{d_1+1}{2}$ of such codes up to isometry and there exist exactly $\sigma_0(d_1)$ of such codes up to isomorphism where $\sigma_0$, as usual, denotes the number-of-divisors function.
\end{corollary}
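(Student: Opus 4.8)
All three quantities are cardinalities of sets already described in Theorem~\ref{parametrizationTheorem}, so the plan is simply to count. As recorded at the start of this subsection (using Proposition~\ref{2Disstandard}), every linear $(2,e,q)$-perfect code is isometric, via the coordinate swap $(x,y)\mapsto(y,x)$, to one of type~$2$, so it is enough to work inside $LPL^{\infty}(2,e,q)_o$. The bijection $\psi\colon\Z_{d_1}\to LPL^{\infty}(2,e,q)_o$ of Theorem~\ref{parametrizationTheorem} then gives $\#LPL^{\infty}(2,e,q)_o=\#\Z_{d_1}=d_1$ at once. (If one prefers to count all of $LPL^{\infty}(2,e,q)$ without identifying a type~$1$ code with its swap, one adds that type~$1$ and type~$2$ codes overlap only in the cartesian code; under the type~$2$ normalization of this subsection the relevant number is $d_1$.)

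For the count up to isometry I would pass to the induced bijection $\psi_{\mathcal G}\colon \Z_{d_1}/\{1,-1\}\to LPL^{\infty}(2,e,q)_o/\mathcal G$, so the task becomes computing the number of orbits of the negation involution $k\mapsto -k$ on $\Z_{d_1}$. An orbit $\{k,-k\}$ is a singleton exactly when $2k\equiv 0\pmod{d_1}$, and since $d_1\mid 2e+1$ is odd this forces $k\equiv 0$. Hence $0$ is the unique fixed point and the remaining $d_1-1$ residues split into $(d_1-1)/2$ two-element orbits, for a total of $1+\tfrac{d_1-1}{2}=\tfrac{d_1+1}{2}$ orbits.

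For the count up to isomorphism I would use the induced bijection $\psi_{\mathcal A}\colon \Z_{d_1}/\Z_{d_1}^{*}\to LPL^{\infty}(2,e,q)_o/\mathcal A$, so the answer is $\#\bigl(\Z_{d_1}/\Z_{d_1}^{*}\bigr)$. By Lemma~\ref{InvisibleLemma}, $k_1\Z_{d_1}^{*}=k_2\Z_{d_1}^{*}$ if and only if $\gcd(k_1,d_1)=\gcd(k_2,d_1)$, so the generalized cosets are in bijection with the set $\{\gcd(k,d_1):k\in\Z\}$, which is precisely the set of divisors of $d_1$. Thus there are $\sigma_0(d_1)$ such classes.

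The computations are all short; the one step that needs a moment's care — the main obstacle here — is the isometry count, which relies on $d_1$ being odd (forced by $d_1\mid 2e+1$) so that negation on $\Z_{d_1}$ fixes only $0$. Everything else is a direct readout of Theorem~\ref{parametrizationTheorem} and Lemma~\ref{InvisibleLemma}, with no further calculation.
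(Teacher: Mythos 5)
Your proof is correct and follows essentially the same route as the paper, which simply declares the first two counts immediate from Theorem~\ref{parametrizationTheorem} and obtains the third from the decomposition $\mathbb{Z}_{d_1}=\biguplus_{d\mid d_1}d\,\mathbb{Z}_{d_1}^{*}$ — exactly the divisor correspondence you extract from Lemma~\ref{InvisibleLemma}. You in fact supply more detail than the paper does, including the observation that the isometry count needs $d_1$ odd and the (legitimate) caveat that the raw count $d_1$ refers to the type-$2$ normalization $LPL^{\infty}(2,e,q)_o$, a point the paper glosses over.
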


\begin{proof}
The first two assertion are immediate. For the third assertion we use that $\mathbb{Z}_{d_1}=\biguplus_{d\mid d_1}d \mathbb{Z}_{d_1}^{*}$ and use Theorem \ref{parametrizationTheorem}.
\end{proof}

\section{Constructions of perfect codes in arbitrary dimensions}

In this section we give some constructions of perfect codes in the maximum metric from perfect codes in smaller dimensions. We also present a section construction which plays an important role in the next section.\\

The simplest way to obtain perfect codes is using cartesian product. Using the sphere packing condition we obtain the following proposition.

\begin{proposition}[Cartesian product construction] \label{CartesianProductProp}
If $C_1 \in PL^{\infty}(n_1,e,q)$ and $C_2 \in PL^{\infty}(n_1,e,q)$ then $C_1\times C_2 \in PL^{\infty}(n_1+n_2,e,q)$. This construction preserves linearity.
\end{proposition}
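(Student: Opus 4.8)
The plan is to verify that $C_1 \times C_2$ has the two defining properties of a perfect code—packing radius $e$ and covering radius $e$—by exploiting the product structure of the maximum metric. The crucial observation is that for points $x = (x', x'')$ and $y = (y', y'')$ in $\Z_q^{n_1} \times \Z_q^{n_2} = \Z_q^{n_1+n_2}$, the maximum distance splits as $d(x,y) = \max\{d(x', y'), d(x'', y'')\}$, and consequently a ball factors as $B((c',c''), r) = B(c', r) \times B(c'', r)$. First I would record this factorization explicitly, since every subsequent step reduces to it.

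Next I would check that balls around distinct codewords of $C_1 \times C_2$ are disjoint. Take $c = (c_1, c_2)$ and $\tilde c = (\tilde c_1, \tilde c_2)$ in $C_1 \times C_2$ with $c \ne \tilde c$; then $c_1 \ne \tilde c_1$ or $c_2 \ne \tilde c_2$, say the former. Since $C_1$ has packing radius $e$, the balls $B(c_1, e)$ and $B(\tilde c_1, e)$ are disjoint in $\Z_q^{n_1}$; using the factorization $B(c,e) = B(c_1,e) \times B(c_2,e)$ one concludes $B(c,e) \cap B(\tilde c, e) = \emptyset$. This shows the packing radius of $C_1 \times C_2$ is at least $e$. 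Conversely, I would note it is exactly $e$: fixing any $c_2 \in C_2$ and two codewords of $C_1$ at distance $2e+1$ (which exist since $C_1$ is perfect, hence $\mathrm{dist}(C_1) = 2e+1$), the corresponding pair in $C_1 \times C_2$ is also at distance $2e+1$, so no larger packing radius is possible.

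For the covering property, given an arbitrary $x = (x_1, x_2) \in \Z_q^{n_1+n_2}$, use that $C_1$ and $C_2$ each have covering radius $e$ to pick $c_1 \in C_1$ with $d(x_1, c_1) \le e$ and $c_2 \in C_2$ with $d(x_2, c_2) \le e$; then $(c_1, c_2) \in C_1 \times C_2$ and $d(x, (c_1,c_2)) = \max\{d(x_1,c_1), d(x_2,c_2)\} \le e$, so the covering radius is at most $e$. Combined with the packing bound, $C_1 \times C_2$ is perfect with radius $e$, i.e. lies in $PL^{\infty}(n_1+n_2, e, q)$. (One may alternatively invoke the sphere packing condition as the paper suggests: $\#(C_1 \times C_2) = \#C_1 \cdot \#C_2 = t^{n_1} t^{n_2} = t^{n_1+n_2}$, and a code of this cardinality and minimum distance $\ge 2e+1$ is automatically perfect; I would mention both routes.) Finally, preservation of linearity is immediate: if $C_1 \le \Z_q^{n_1}$ and $C_2 \le \Z_q^{n_2}$ are subgroups, then $C_1 \times C_2$ is a subgroup of $\Z_q^{n_1} \times \Z_q^{n_2} = \Z_q^{n_1+n_2}$. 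I do not anticipate a genuine obstacle here; the only point requiring slight care is being explicit that the maximum metric on the product is the maximum of the two factor metrics, so that balls factor—everything else is bookkeeping. (I also note the statement contains an evident typo, "$C_2 \in PL^{\infty}(n_1,e,q)$" should read $n_2$, which the proof silently corrects.)
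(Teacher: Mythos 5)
Your proof is correct, and it subsumes the paper's argument: the paper simply invokes the sphere packing condition (cardinality $t^{n_1+n_2}$ together with minimum distance $\geq 2e+1$, both immediate from the factorization $d(x,y)=\max\{d(x',y'),d(x'',y'')\}$), which is exactly the alternative route you mention in parentheses. Your primary route via direct verification of the packing and covering radii is equally valid and only slightly longer, and your observation about the typo $n_1$ versus $n_2$ is accurate.
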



\begin{corollary}
There exists a linear non-cartesian $n$-dimensional $q$-ary perfect code if and only if $q=p^2a$ where $p$ is an odd prime number and $a$ is a positive integer.
\end{corollary}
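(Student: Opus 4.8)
The statement to prove is the corollary characterizing when a linear non-cartesian $n$-dimensional $q$-ary perfect code exists: namely, iff $q=p^2a$ with $p$ an odd prime and $a$ a positive integer. I will prove both directions.

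\textbf{Plan.} The sufficiency direction is essentially a reduction to dimension two via the cartesian product construction (Proposition \ref{CartesianProductProp}), while the necessity direction requires showing that if $q$ is \emph{not} of the form $p^2a$, then every linear perfect code over $\Z_q^n$ is the cartesian code, regardless of dimension. The key arithmetic fact is that $q=p^2a$ for some odd prime $p$ (and some positive integer $a$) is equivalent to: there exist integers $e\geq 0$ and $t>1$ with $q=(2e+1)t$ and $\gcd(2e+1,t)>1$; indeed such a common factor must contain an odd prime $p$, giving $p^2\mid q$, and conversely if $p^2\mid q$ with $p$ odd one writes $2e+1=p$, $t=q/p$ so that $p\mid\gcd(2e+1,t)$. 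This is exactly the equivalence already exploited in Corollary \ref{ExistsNonStandard2DCor}.

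\textbf{Sufficiency.} Suppose $q=p^2a$ with $p$ an odd prime. By the remark above, write $q=(2e+1)t$ with $\gcd(2e+1,t)=d_1>1$. By Theorem \ref{Structure2DTh}(ii) (or directly Corollary \ref{ExistsNonStandard2DCor}) there is a two-dimensional linear $q$-ary perfect code $C_0\in LPL^{\infty}(2,e,q)$ that is non-cartesian; concretely one may take $C_0=LC_q(e,1)$, which by Theorem \ref{Structure2DTh}(i) is isomorphic to $\Z_{t/d_1}\times\Z_{td_1}\not\simeq\Z_t\times\Z_t$, hence is not the cartesian code $(2e+1)\Z_q^2$. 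If $n=1$ the statement is vacuous on this side (a $1$-dimensional perfect code is always $a+(2e+1)\Z_q$, i.e. a translate of the cartesian code, so ``non-cartesian linear'' forces $n\geq 2$); for $n\geq 2$ set $C=C_0\times(2e+1)\Z_q^{n-2}$. By Proposition \ref{CartesianProductProp} this is a linear perfect code in $LPL^{\infty}(n,e,q)$, and it is non-cartesian because its group structure $C\simeq(\Z_{t/d_1}\times\Z_{td_1})\times\Z_t^{n-2}$ differs from $\Z_t^n$ (the cartesian code $(2e+1)\Z_q^n$ is, up to isomorphism, the unique code isomorphic to $\Z_t^n$, by the two-dimensional computation applied coordinatewise, or simply because the invariant factors differ). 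Thus a non-cartesian linear $q$-ary perfect code exists in the appropriate dimension.

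\textbf{Necessity.} Conversely, suppose $q$ is \emph{not} of the form $p^2a$ with $p$ odd; equivalently, in every factorization $q=(2e+1)t$ one has $\gcd(2e+1,t)=1$ (this uses that $q$ square-free-in-odd-part, i.e. $q=2^\alpha m$ with $m$ squarefree and odd, forces coprimality of the two factors since $2e+1$ is odd and $t$ absorbs the $2$-part). Let $C\in LPL^{\infty}(n,e,q)$ be linear with generator matrix $M$. By the Minkowski--Haj\'os corollary (Corollary \ref{MinkoskiConjStandardCor}) $C$ is standard, so after permuting coordinates $C+(2e+1)e_n\subseteq C$; iterating this argument on the ``slices'' (or applying Proposition \ref{2Disstandard}-type reasoning dimension by dimension) one reduces $M$ to upper-triangular form with all diagonal entries equal to $2e+1$. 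The off-diagonal entries are then multiples of $h_1=\tfrac{2e+1}{\gcd(2e+1,t)}=2e+1$, since $\gcd(2e+1,t)=1$; but ``$qM^{-1}$ integral'' combined with the minimum-distance requirement $\mathrm{dist}(C)=2e+1$ forces each such entry to be $\equiv 0\pmod{2e+1}$, i.e. the off-diagonal entries all lie in $(2e+1)\Z$, whence $\mathrm{span}(M)=(2e+1)\Z^n+q\Z^n=(2e+1)\Z^n$ and $C=(2e+1)\Z_q^n$ is the cartesian code. Hence no non-cartesian linear perfect code exists when $q$ is not of the stated form.

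\textbf{Main obstacle.} The delicate point is the necessity direction: getting from ``standard in one direction'' (all Minkowski--Haj\'os gives directly) to ``simultaneously triangularizable with constant diagonal $2e+1$ \emph{and} off-diagonal entries forced into $(2e+1)\Z$.'' In dimension two this was Theorem \ref{GeneratorForLPL2D}; in general one must argue that the $\gcd(2e+1,t)=1$ hypothesis kills all freedom in the off-diagonal slots. I would handle this by the same ``$ty_1=(2e+1)s+r$ with $|r|<2e+1$, hence $r=0$'' divisibility trick used in the proof of Theorem \ref{GeneratorForLPL2D}, applied to each off-diagonal entry in turn after putting $M$ in Hermite-like upper-triangular form; the coprimality $\gcd(2e+1,t)=1$ is exactly what makes $h_1=2e+1$ so that the residue $r$ must vanish. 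Everything else is bookkeeping with the cartesian-product construction and with invariant factors.
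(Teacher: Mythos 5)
The paper states this corollary without proof, immediately after the cartesian product construction, so the evident intended argument is exactly your sufficiency direction: take the non-cartesian two-dimensional code of Corollary \ref{ExistsNonStandard2DCor} and pad it with cartesian factors via Proposition \ref{CartesianProductProp}. That half of your proposal is correct and essentially complete (including your observation that $n\geq 2$ is implicitly required, since in dimension one every linear perfect code is cartesian). Your arithmetic reduction of the condition $q=p^2a$ to the existence of a factorization $q=(2e+1)t$ with $\gcd(2e+1,t)>1$ is also correct.

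The necessity direction is where your write-up has a genuine gap. After triangularizing the generator matrix (which is the content of Propositions \ref{TransCor} and \ref{PerfectGMProp}; invoking them is legitimate since their proofs do not depend on this corollary), you assert that ``the off-diagonal entries are then multiples of $h_1$.'' In dimension two this is Theorem \ref{GeneratorForLPL2D}, but for $n>2$ it cannot be applied entry by entry as you propose: the trick $ty_1=(2e+1)s+r$ works in dimension two because $tM_1-sM_2$ has all but one coordinate killed modulo $q$, whereas for a general row $M_i$ you must reduce $t M_i$ against the span of \emph{all} the lower rows, and to conclude that the residues vanish you first need to know that rows $M_{i+1},\dots,M_n$ already generate $(2e+1)(\{0\}^i\times\Z_q^{n-i})$ --- i.e.\ a bottom-up induction that your sketch glosses over. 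The cleanest repair bypasses this entirely via the perfect-matrix equation: $M$ satisfies $AM=qB$ with $A\in\nabla_n(t)$ and $B\in\nabla_n(1)$, so $AM\equiv 0\pmod{2e+1}$; since $\det(A)=t^n$ is coprime to $2e+1$, the matrix $A$ is invertible modulo $2e+1$ and hence $M\equiv 0\pmod{2e+1}$. As $\det(M)=(2e+1)^n=\det\bigl((2e+1)I_n\bigr)$, the inclusion $\mathrm{span}(M)\subseteq (2e+1)\Z^n$ is an equality, so $C$ is cartesian. With that substitution your argument closes.
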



\begin{corollary}\label{StructureFromCartProdCor}
If $q=(2e+1)t$ and $d_1,d_2,\ldots,d_k$ are divisors (not necessarily distinct)
of $\gcd(2e+1,t)$, there exists a code $C\in LPL^{\infty}(2k,e,q)$ such that $$C \simeq \mathbb{Z}_{\frac{t}{d_1}}\times \mathbb{Z}_{\frac{t}{d_2}}\times \ldots \mathbb{Z}_{\frac{t}{d_k}} \times \mathbb{Z}_{d_1t}\times \mathbb{Z}_{d_2t}\times \ldots \mathbb{Z}_{d_kt}$$ and a code $C\in LPL^{\infty}(2k+1,e,q)$ such that $$C \simeq \mathbb{Z}_{\frac{t}{d_1}}\times \mathbb{Z}_{\frac{t}{d_2}}\times \ldots \mathbb{Z}_{\frac{t}{d_k}} \times \mathbb{Z}_{t}\times \mathbb{Z}_{d_1t}\times \mathbb{Z}_{d_2t}\times \ldots \mathbb{Z}_{d_kt}.$$
\end{corollary}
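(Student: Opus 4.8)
The plan is to obtain both codes simply by taking a cartesian product of two-dimensional perfect codes from Section 3, using Proposition \ref{CartesianProductProp} to assemble them and the classification in Theorem \ref{Structure2DTh}(ii) to control the resulting group structure. For the even-dimensional statement, for each $i \in [k]$ the divisor $d_i \mid \gcd(2e+1,t)$ furnishes, by Theorem \ref{Structure2DTh}(ii), a two-dimensional perfect code $C_i \in LPL^{\infty}(2,e,q)$ with $C_i \simeq \Z_{t/d_i}\times \Z_{d_i t}$. Then I would set $C = C_1 \times C_2 \times \cdots \times C_k$. By Proposition \ref{CartesianProductProp} (applied $k-1$ times), $C$ is a linear perfect code in $LPL^{\infty}(2k,e,q)$, and as abelian groups $C \simeq \prod_{i=1}^{k} C_i \simeq \prod_{i=1}^{k}\bigl(\Z_{t/d_i}\times \Z_{d_i t}\bigr)$. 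Reordering the direct factors (which does not change the isomorphism class) gives exactly $\Z_{t/d_1}\times \cdots \times \Z_{t/d_k}\times \Z_{d_1 t}\times \cdots \times \Z_{d_k t}$, as claimed.

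For the odd-dimensional statement I would do the same, but first append one extra copy of a one-dimensional perfect code. Recall from the start of Section 3 that the one-dimensional perfect codes in $PL^{\infty}(1,e,q)$ are exactly $a + (2e+1)\Z_q$; taking $a=0$ gives the linear code $C_0 = (2e+1)\Z_q \in LPL^{\infty}(1,e,q)$, which is cyclic with $C_0 \simeq \Z_q/(2e+1)\Z_q \simeq \Z_t$. Then $C = C_0 \times C_1 \times \cdots \times C_k$ with the $C_i$ chosen as above lies in $LPL^{\infty}(2k+1,e,q)$ by Proposition \ref{CartesianProductProp}, and $C \simeq \Z_t \times \prod_{i=1}^{k}\bigl(\Z_{t/d_i}\times \Z_{d_i t}\bigr)$, which after reordering the factors is the desired group.

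I do not expect a genuine obstacle here: the only points requiring a word of care are that Proposition \ref{CartesianProductProp} preserves both perfection and linearity (stated there), that a cartesian product of $(n_i,e,q)$-codes with a common $q$ and common $e$ is again $q$-ary with packing radius $e$ (so the hypotheses on $q=(2e+1)t$ match throughout), and that the group structure of a cartesian product of codes is the direct product of their group structures — this is immediate since $\Lambda_{C_1\times C_2} = \Lambda_{C_1}\times \Lambda_{C_2}$ under Construction A, or directly from $C_1\times C_2$ as a subgroup of $\Z_q^{n_1+n_2}$. The reordering of cyclic factors at the end is harmless because isomorphism of abelian groups is insensitive to the order of a direct sum decomposition.
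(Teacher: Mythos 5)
Your proposal is correct and is exactly the argument the paper intends: the corollary is stated as an immediate consequence of Proposition \ref{CartesianProductProp}, obtained by taking the cartesian product of the two-dimensional codes $\Z_{t/d_i}\times\Z_{d_i t}$ supplied by Theorem \ref{Structure2DTh}(ii), together with the one-dimensional cartesian code $(2e+1)\Z_q\simeq\Z_t$ in the odd-dimensional case.
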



\begin{remark}
There are other linear perfect codes whose group structure is not of the form given in Corollary \ref{StructureFromCartProdCor} (for example those in Corollary \ref{CyclicFamilyCor}).
\end{remark}

The next construction is specific for linear codes, this allows to construct a linear perfect $q$-ary code from other codes of smaller dimensions.

\begin{notation}\label{tALaMenos1Not}
If $H$ is a subgroup of an abelian group $G$ and $t \in \mathbb{Z}^+$, we denote by $t^{-1}H= \{g \in G: tg \in H\}$.
\end{notation}

\begin{remark}
With the above notation, $t^{-1}H$ is a subgroup of $G$ that contains $H$.
\end{remark}

\begin{proposition}[Linear construction]\label{LinearConstrProp}
If $C \in LPL^{\infty}(n,e,q)$ with $q=(2e+1)t$ and $x \in t^{-1}C$, then $\widetilde{C}= C \times \{0\}  +(x,2e+1)\mathbb{Z} \in LPL^{\infty}(n+1,e,q)$.
\end{proposition}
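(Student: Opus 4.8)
The plan is to verify directly that $\widetilde{C}$ is a linear $(n+1,e,q)$-code of the correct cardinality, and then invoke the sphere packing condition to conclude perfection. Linearity is immediate: $C \times \{0\}$ is a subgroup of $\Z_q^{n+1}$ and $(x,2e+1)\Z$ is a cyclic subgroup, so their sum is a subgroup. For the cardinality, I would note that the $(n+1)$-st coordinates of elements of $\widetilde{C}$ form the subgroup $(2e+1)\Z_q = (2e+1)\Z/(2e+1)t\Z$, which has order $t$, while the kernel of the projection onto the last coordinate, restricted to $\widetilde{C}$, is exactly $C\times\{0\}$ together with the contributions $(tx + C\times\{0\})$ coming from multiples of $(x,2e+1)$ by multiples of $t$; since $tx \in C$ by the hypothesis $x \in t^{-1}C$, these contribute nothing new, so the kernel is $C \times \{0\}$, of order $\#C = t^n$. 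Hence $\#\widetilde{C} = t \cdot t^n = t^{n+1}$, which is precisely $q^{n+1}/(2e+1)^{n+1}$.

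\textbf{Minimum distance.} The main work is showing $\mbox{dist}(\widetilde{C}) \geq 2e+1$; equivalently, since $\widetilde{C}$ is linear, that every nonzero codeword has $\|\cdot\|_\infty \geq 2e+1$. A general codeword has the form $\overline{(c + jx,\, j(2e+1))}$ with $c \in \Lambda_C$ (a lift of an element of $C$) and $j \in \Z$. I would split into two cases according to the last coordinate. If $j \not\equiv 0 \pmod t$, then $\overline{j(2e+1)} \neq 0$ in $\Z_q$ and in fact lies in $(2e+1)\Z_q \setminus \{0\}$, so its Lee norm in $\Z_q$ is at least $2e+1$ (the nonzero multiples of $2e+1$ in $\Z_{(2e+1)t}$ are $(2e+1), 2(2e+1), \ldots$, all at distance $\geq 2e+1$ from $0$), giving $\|\cdot\|_\infty \geq 2e+1$ already from the last coordinate. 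If $j \equiv 0 \pmod t$, write $j = t\ell$; then $j x = \ell(tx)$ and $tx \in \Lambda_C$ (a lift witnessing $x \in t^{-1}C$, chosen compatibly), so $c + jx$ represents an element of $C$, and the last coordinate $\overline{j(2e+1)} = \overline{t\ell(2e+1)} = \overline{\ell q} = 0$. Thus the codeword equals $\overline{(c',0)}$ with $\overline{c'} \in C$; if it is nonzero then $\overline{c'} \neq 0$ and $\|\overline{c'}\|_\infty \geq \mbox{dist}(C) = 2e+1$ since $C$ is perfect with packing radius $e$.

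\textbf{Conclusion.} Having established $\#\widetilde{C} = t^{n+1}$ and $\mbox{dist}(\widetilde{C}) \geq 2e+1$, the packing radius of $\widetilde{C}$ is at least $e$, so the balls $B(\widetilde{c}, e)$ for $\widetilde{c} \in \widetilde{C}$ are pairwise disjoint; each such ball has $(2e+1)^{n+1}$ points, and $\#\widetilde{C} \cdot (2e+1)^{n+1} = t^{n+1}(2e+1)^{n+1} = q^{n+1} = \#\Z_q^{n+1}$, forcing the balls to cover the whole space. Hence $\widetilde{C}$ is perfect with packing radius exactly $e$, i.e. $\widetilde{C} \in LPL^{\infty}(n+1,e,q)$.

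\textbf{Main obstacle.} The one point requiring care is the bookkeeping in the case $j \equiv 0 \pmod t$: one must choose the integer lift of $x$ so that $t$ times it genuinely lands in the lattice $\Lambda_C$ (not merely in $\Lambda_C + q\Z^{n+1}$), which is exactly the content of the hypothesis $x \in t^{-1}C$ interpreted at the level of $\Z_q^n$; and one must confirm the last-coordinate computation $\overline{t\ell(2e+1)} = 0$ uses $q = (2e+1)t$. Everything else is routine.
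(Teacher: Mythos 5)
Your proof is correct and follows essentially the same route as the paper: write each codeword as $(c+kx,(2e+1)k)$, split on whether $k\equiv 0\pmod t$ (using $tx\in C$ to reduce that case to the minimum distance of $C$, and the Lee norm of nonzero multiples of $2e+1$ otherwise), count $\#\widetilde{C}=t^{n+1}$, and conclude by the sphere packing condition. The only cosmetic difference is that you compute the cardinality via the projection onto the last coordinate (image of order $t$, kernel $C\times\{0\}$), whereas the paper uses the formula $\#(A+B)=\#A\,\#B/\#(A\cap B)$ with the order of $(x,2e+1)$; both are routine and equivalent.
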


\begin{proof}
Since $tx \in C$ every codeword $v \in \widetilde{C}$ can be written as $v=(c+xk,(2e+1)k)$ with $c \in C$ and $0\leq k <t$ and we have \begin{equation}\label{infEq} \|(c+xk,(2e+1)k)\|_{\infty}= \max\{\|c+xk\|_{\infty}, \|(2e+1)k\|_{\infty}\}.\end{equation}
If $k=0$, then $\|(c+xk,(2e+1)k)\|_{\infty}=\|c\|_{\infty}\geq 2e+1$ if $c\neq 0$ (because $C$ have packing radius $e$).
If $0<k<t$, then $\|(2e+1)k\|_{\infty}\geq 2e+1$ and by \eqref{infEq} we have $\|(c+xk,(2e+1)k)\|_{\infty}\geq 2e+1$. We conclude that $C$ has packing radius at least $e$. We want to calculate the cardinality of $C$, that is \begin{equation}\label{CcardinalityEq}\#C = \frac{\# C \times \{0\} \cdot \#(x,2e+1)\mathbb{Z}}{\# C\times \{0\} \cap (x,2e+1)\mathbb{Z}}. \end{equation}
We have $\# C \times \{0\}= \#C = t^n$. Let $\theta$ the additive order of $tx$ in $\mathbb{Z}_q^n$ (i.e. the least positive integer $\theta$ such that $\theta t x =0$). It is straightforward to check that the order of $(x,2e+1)$ in $\mathbb{Z}_q^{n+1}$ is $t\theta$ and that $C\times \{0\} \cap (x,2e+1)\mathbb{Z}=(tx,0)\mathbb{Z}$. Using equation \eqref{CcardinalityEq} we have $\#C = \frac{t^{n}\cdot t\theta}{\theta}=t^{n+1}$ and by the sphere packing condition the code $\widetilde{C}\subseteq \mathbb{Z}_q^{n+1}$ is perfect with packing radius $e$.
\end{proof}

\begin{corollary}\label{CyclicFamilyCor}
If $q=(2e+1)t$ with $t^{n-1}\mid 2e+1$ and $n\geq 1$, then the $q$-ary cyclic code $$\mathcal{C}_{n,e,q}= \left\langle \left(\frac{2e+1}{t^{n-1}},\frac{2e+1}{t^{n-2}},\ldots,\frac{2e+1}{t},2e+1\right) \right\rangle \in LPL^{\infty}(n,e,q).$$
\end{corollary}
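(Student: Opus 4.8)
The plan is to prove this corollary by induction on $n$, using the linear construction (Proposition~\ref{LinearConstrProp}) as the inductive step. For the base case $n=1$ the hypothesis $t^{0}=1\mid 2e+1$ is vacuous, and $\mathcal{C}_{1,e,q}=\langle(2e+1)\rangle = (2e+1)\mathbb{Z}_q$, which is exactly the one-dimensional cartesian code and hence belongs to $LPL^{\infty}(1,e,q)$.

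For the inductive step, suppose $q=(2e+1)t$ with $t^{n-1}\mid 2e+1$ and that $\mathcal{C}_{n,e,q}=\langle w\rangle\in LPL^{\infty}(n,e,q)$ where $w=\left(\frac{2e+1}{t^{n-1}},\ldots,\frac{2e+1}{t},2e+1\right)$. Note that $t^{n}\mid 2e+1$ implies $t^{n-1}\mid 2e+1$, so the inductive hypothesis applies whenever the hypothesis for dimension $n+1$ holds. I would set $C=\mathcal{C}_{n,e,q}$ and exhibit a vector $x\in t^{-1}C$ such that $C\times\{0\}+(x,2e+1)\mathbb{Z}$ equals $\mathcal{C}_{n+1,e,q}$; Proposition~\ref{LinearConstrProp} then immediately gives $\mathcal{C}_{n+1,e,q}\in LPL^{\infty}(n+1,e,q)$. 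The natural candidate is $x=\left(\frac{2e+1}{t^{n}},\frac{2e+1}{t^{n-1}},\ldots,\frac{2e+1}{t}\right)$, which makes sense in $\mathbb{Z}_q^{n}$ precisely because $t^{n}\mid 2e+1$. One checks that $tx=\left(\frac{2e+1}{t^{n-1}},\ldots,\frac{2e+1}{t},2e+1\right)\cdot$ — more carefully, $tx$ has coordinates $\frac{2e+1}{t^{n-1}},\frac{2e+1}{t^{n-2}},\ldots,2e+1$, which is exactly $w$ (up to the obvious shift of indices), so $tx\in C$, i.e. $x\in t^{-1}C$. Then the generator of the new code is $(x,2e+1)=\left(\frac{2e+1}{t^{n}},\frac{2e+1}{t^{n-1}},\ldots,\frac{2e+1}{t},2e+1\right)$, which is precisely the generator of $\mathcal{C}_{n+1,e,q}$, and since $C\times\{0\}=\langle(tx,0)\rangle=\langle(w,0)\rangle$ is contained in the cyclic group generated by $(x,2e+1)$, we get $C\times\{0\}+(x,2e+1)\mathbb{Z}=\langle(x,2e+1)\rangle=\mathcal{C}_{n+1,e,q}$. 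Finally, that $\mathcal{C}_{n,e,q}$ is cyclic is automatic since it is generated by a single element.

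The main obstacle I anticipate is purely bookkeeping: verifying that $tx$ (computed in $\mathbb{Z}_q^{n}$, not in $\mathbb{Z}^{n}$) really lands inside $C=\langle w\rangle$ and that the span of $(x,2e+1)$ absorbs $C\times\{0\}$, so that the semidirect-looking sum $C\times\{0\}+(x,2e+1)\mathbb{Z}$ collapses to a genuinely cyclic group generated by $(x,2e+1)$. This requires tracking the divisibility relations $t^{j}\mid 2e+1$ carefully across the index shift, and checking that reduction modulo $q$ does not introduce spurious elements. Once that is done, the perfection of $\mathcal{C}_{n+1,e,q}$ is handed to us for free by Proposition~\ref{LinearConstrProp}, and the cardinality count $\#\mathcal{C}_{n+1,e,q}=t^{n+1}$ agreeing with the sphere-packing bound is already built into that proposition.
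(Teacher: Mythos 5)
Your proposal is correct and follows essentially the same route as the paper: induction on $n$ via Proposition~\ref{LinearConstrProp} with the same choice $x=\left(\frac{2e+1}{t^{n}},\ldots,\frac{2e+1}{t}\right)$, the observation that $tx$ equals the previous generator, and the identity $t\,(x,2e+1)=(w,0)$ in $\mathbb{Z}_q^{n+1}$ to collapse the sum to a single cyclic group. No gaps.
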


\begin{proof}
We denote by $p_n=\left(\frac{2e+1}{t^{n-1}},\frac{2e+1}{t^{n-2}},\ldots,\frac{2e+1}{t},2e+1\right)\in \mathbb{Z}_{q}^{n}$ and proceed by induction. For $n=1$ it is clear. If $C_{n,e,q}\in LPL^{\infty}(n,e,q)$ holds for some $n\geq 1$, we apply the linear construction with $x=\left(\frac{2e+1}{t^{n}},\frac{2e+1}{t^{n-2}},\ldots,\frac{2e+1}{t}\right)$. Since $tx=p_n\in C_{n,e,q}$ then $\widetilde{C}=\langle (p_n,0),(x,2e+1)=p_{n+1} \rangle  \in LPL^{\infty}(n+1,e,q).$ We remark that $tp_{n+1}=(p_n,0)$ (because $(2e+1)t\equiv0 \pmod{q}$), so $\widetilde{C}=\langle p_{n+1} \rangle$.
\end{proof}

In particular, if $2e+1=t^{n-1}$ we obtain the following family of cyclic perfect codes.

\begin{corollary} \label{CyclicFamily2Cor}
If $q=t^n$ where $t$ is an odd number, then the $q$-ary code $C=\langle (1,t,t^2,\ldots,t^{n-1}) \rangle  \in LPL^{\infty}(n,e,q)$, for the packing radius $e=(t^{n-1}-1)/2$.
\end{corollary}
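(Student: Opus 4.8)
The plan is to derive this directly from Corollary \ref{CyclicFamilyCor} by specializing the parameters. Setting $2e+1 = t^{n-1}$ forces $e = (t^{n-1}-1)/2$, which is an integer precisely because $t$ is odd; and with $q = t^n = (2e+1)t$ the hypothesis $t^{n-1} \mid 2e+1$ of Corollary \ref{CyclicFamilyCor} holds with equality, so $\mathcal{C}_{n,e,q} \in LPL^{\infty}(n,e,q)$ is a cyclic perfect code. It remains only to identify its generator vector. In Corollary \ref{CyclicFamilyCor} the generator is $p_n = \left(\frac{2e+1}{t^{n-1}}, \frac{2e+1}{t^{n-2}}, \ldots, \frac{2e+1}{t}, 2e+1\right)$; substituting $2e+1 = t^{n-1}$ gives the $i$-th coordinate as $\frac{t^{n-1}}{t^{n-i}} = t^{i-1}$, so $p_n = (1, t, t^2, \ldots, t^{n-1})$, which is exactly the claimed generator.

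First I would verify the divisibility/integrality bookkeeping: $t$ odd $\Rightarrow$ $t^{n-1}$ odd $\Rightarrow$ $t^{n-1} - 1$ even $\Rightarrow$ $e \in \mathbb{Z}^+$ (or $e = 0$ when $n=1$, handled trivially), and $q = t^n$ satisfies $q = (2e+1)t$ with $t > 1$ when $t \geq 3$; the degenerate case $t = 1$ gives $q = 1$ and should be excluded implicitly by the standing assumption $t>1$. Then I would simply invoke Corollary \ref{CyclicFamilyCor} with this choice of parameters, noting that the condition ``$t^{n-1} \mid 2e+1$ and $n \geq 1$'' is met, and finish by the coordinate computation above showing $p_n = (1,t,\ldots,t^{n-1})$.

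There is essentially no obstacle here: the statement is a corollary in the literal sense, obtained by plugging one equation into the preceding corollary and simplifying a geometric-progression-type expression. The only thing to be careful about is the indexing convention in the vector $p_n$ (whether coordinates are read left-to-right as decreasing or increasing powers), and confirming that the reduction modulo $q = t^n$ does not collapse any of the coordinates $1, t, \ldots, t^{n-1}$ (it does not, since all these lie strictly between $0$ and $t^n$). Thus the proof is a two-line deduction: apply Corollary \ref{CyclicFamilyCor} with $2e+1 = t^{n-1}$, and simplify the generator.

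\begin{proof}
Apply Corollary \ref{CyclicFamilyCor} with $2e+1 = t^{n-1}$, i.e. $e = (t^{n-1}-1)/2$ (an integer since $t$ is odd), and $q = t^n = (2e+1)t$. The hypothesis $t^{n-1} \mid 2e+1$ holds with equality, so $\mathcal{C}_{n,e,q} \in LPL^{\infty}(n,e,q)$ is a cyclic perfect code with generator $p_n = \left(\frac{2e+1}{t^{n-1}}, \frac{2e+1}{t^{n-2}}, \ldots, \frac{2e+1}{t}, 2e+1\right)$. Substituting $2e+1 = t^{n-1}$, the $i$-th coordinate equals $t^{n-1}/t^{n-i} = t^{i-1}$ for $1 \leq i \leq n$, so $p_n = (1, t, t^2, \ldots, t^{n-1})$, as claimed.
\end{proof}
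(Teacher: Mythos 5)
Your proof is correct and is exactly the paper's intended derivation: the paper introduces this corollary with the sentence ``In particular, if $2e+1=t^{n-1}$ we obtain the following family of cyclic perfect codes,'' i.e.\ it is precisely the specialization of Corollary \ref{CyclicFamilyCor} that you carry out. Your extra bookkeeping (integrality of $e$ from $t$ odd, the coordinate computation $t^{n-1}/t^{n-i}=t^{i-1}$) is the right verification and matches the paper's implicit argument.
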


\begin{proposition} \label{CyclicCharacterizationProp}
Let $q=(2e+1)t$. There exists a cyclic code in $LPL^{\infty}(n,e,q)$ if and only if $t^{n-1}\mid 2e+1$.
\end{proposition}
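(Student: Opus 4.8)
The plan is to prove both implications of Proposition \ref{CyclicCharacterizationProp} separately. For the sufficiency direction ($\Leftarrow$), I would simply invoke Corollary \ref{CyclicFamilyCor}: when $t^{n-1}\mid 2e+1$, the explicitly constructed code $\mathcal{C}_{n,e,q}=\langle(\frac{2e+1}{t^{n-1}},\ldots,\frac{2e+1}{t},2e+1)\rangle$ lies in $LPL^{\infty}(n,e,q)$ and is cyclic by construction, so nothing more is needed. (Alternatively, one can cite Corollary \ref{CyclicFamily2Cor} after reducing, but Corollary \ref{CyclicFamilyCor} is exactly the statement required.)

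The substantive direction is necessity ($\Rightarrow$): assume $C\in LPL^{\infty}(n,e,q)$ is cyclic, say $C=\langle\overline{v}\rangle$ with $v=(v_1,\ldots,v_n)\in\Z^n$, and deduce $t^{n-1}\mid 2e+1$. The first step is to exploit the sphere-packing condition: $\#C=t^n$, so the additive order of $\overline{v}$ in $\Z_q^n$ is exactly $t^n$. The second step is to use the fact that $C$ is perfect hence standard (Corollary \ref{MinkoskiConjStandardCor} / Theorem \ref{MinkowskiConjTh}): after composing with a coordinate permutation we may assume $C$ is of type $n$, i.e. $(2e+1)e_n\in C$. Since $C$ is cyclic, $(2e+1)e_n=m\overline{v}$ for some integer $m$; comparing the order of $(2e+1)e_n$, which divides $q/\gcd(2e+1,q)\cdot\gcd$-type bookkeeping, with the order $t^n$ of $\overline{v}$ forces $m$ to have order $t^n/(\text{order of }(2e+1)e_n)$. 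The key numerical step is then to analyze coordinates: the last coordinate of $m\overline{v}$ is $\overline{mv_n}=\overline{2e+1}$, while all other coordinates $\overline{mv_i}=0$. From $mv_i\equiv0\pmod q$ for $i<n$ one extracts divisibility constraints, and combining these with the requirement that $\overline{v}$ genuinely has order $t^n$ (which forces the "spread" of the valuations of the $v_i$ across the $n$ coordinates to be as large as possible) yields that consecutive coordinates must differ multiplicatively by a factor of $t$, exactly as in the explicit family; quantitatively, $v_n$ must be (a unit times) $\frac{q}{t^n}\cdot(\text{something})$ and iterating gives $t^{n-1}\mid v_n^{-1}(2e+1)$-type relations, ultimately $t^{n-1}\mid 2e+1$.

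A cleaner way to organize the necessity argument, which I would actually prefer, is to work with the generator matrix. Let $M\in\mathcal{M}_n(\Z)$ be a generator matrix for $C$ and $A=qM^{-1}$, so by the remarks in Section 2 the group structure of $C$ is the Smith normal form of $A$; cyclicity of $C$ means this Smith normal form is $\mathrm{diag}(1,1,\ldots,1,t^n)$, i.e. $A$ has elementary divisors $1,\ldots,1,t^n$, equivalently every $(n-1)\times(n-1)$ minor of $A$ is a unit (coprime to... actually their gcd is $1$) and $\det A=t^n$. Dually, $M$ has all its $1\times1,\ldots$ structure controlled, and since $C$ contains $(2e+1)\Z_q^n$ we have $(2e+1)\Z^n\subseteq\mathrm{span}(M)\subseteq\Z^n$, so $M$ divides $(2e+1)I$ and $(2e+1)M^{-1}$ is integral; writing $(2e+1)M^{-1}=\frac{2e+1}{q}A=\frac{1}{t}A$, integrality of $\frac1t A$ together with the elementary-divisor structure $1,\ldots,1,t^n$ of $A$ forces $t\mid$ (all entries of $A$), which with $\det A=t^n$ and the minor condition pins down how much of the factor $t^n$ can sit in a single row, yielding $t^{n-1}\mid 2e+1$. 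I expect the main obstacle to be precisely this last bookkeeping: translating "$\frac1t A$ integral and $A$ has elementary divisors $1,\dots,1,t^n$" into the clean statement $t^{n-1}\mid 2e+1$ without an induction on $n$ — likely an induction peeling off one coordinate at a time (as in Corollary \ref{CyclicFamilyCor}) is the honest route, using that a quotient/section of a cyclic perfect code in the lower dimension is again cyclic perfect, so $t^{n-2}\mid 2e+1$ by the inductive hypothesis and one extra factor of $t$ comes from the new coordinate.
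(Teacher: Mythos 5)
Your sufficiency direction is exactly the paper's: cite Corollary \ref{CyclicFamilyCor}. The problem is the necessity direction, where you have the right first step but then miss the one-line finish and replace it with an argument that is both incomplete and, in one place, based on a false premise. Once you observe that a generator $\overline{v}$ of $C$ has additive order $t^n$, you are done: every element of $\Z_q^n$ satisfies $q\overline{v}=0$, so its order divides $q$, hence $t^n\mid q=(2e+1)t$, hence $t^{n-1}\mid 2e+1$. That is the paper's entire proof of necessity. Nothing about standardness, Minkowski--Haj\'os, coordinates of $m\overline{v}$, or ``spread of valuations'' is needed, and as written those passages are not proofs --- phrases like ``$\gcd$-type bookkeeping,'' ``forces the spread of the valuations \ldots to be as large as possible,'' and ``I expect the main obstacle to be precisely this last bookkeeping'' leave the actual deduction of $t^{n-1}\mid 2e+1$ unestablished.

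Your preferred ``cleaner'' route has a concrete error: you assert that $C$ contains $(2e+1)\Z_q^n$, so that $(2e+1)M^{-1}$ is integral. This is false for perfect codes in general (e.g.\ $LC_q(e,k)$ with $d_1\nmid k$ does not contain $(2e+1)e_1$), and it is \emph{especially} false for a cyclic $C$ with $n\geq 2$: since $(2e+1)\Z_q^n$ also has cardinality $t^n$, the containment would force $C=(2e+1)\Z_q^n$, which is not cyclic. So the integrality of $\tfrac1t A$ that your bookkeeping relies on is not available, and that branch of the argument collapses. Replace the whole necessity argument with the order-divides-$q$ observation.
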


\begin{proof}
If $C \in LPL^{\infty}(n,e,q)$ is cyclic, there exists $c \in C$ with order $t^n=|C|$. Since $qc=0$ we have $t^n \mid q$, and so $t^{n-1}\mid 2e+1$. The converse follows from Corollary \ref{CyclicFamilyCor}.
\end{proof}

The next construction generalize horizontal and vertical constructions for two-dimensional perfect code in the maximum metric presented in the previous section.

\begin{proposition}[Non linear construction]
Let $C \in PL^{\infty}(n,e,q)$ and $h:C\rightarrow \mathbb{Z}_q$ be a map (called height function). If $\widehat{C}=\{(c,h(c)+(2e+1)k): c \in C, k \in \mathbb{Z}\}$, then $\widehat{C}\in PL^{\infty}(n+1,e,q)$.
\end{proposition}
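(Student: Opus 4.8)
The plan is to mimic the structure of the linear construction proof (Proposition~\ref{LinearConstrProp}), since $\widehat{C}$ is built from $C$ by stacking translated copies indexed by a height function, exactly as $C_1(a,h)$ and $C_2(a,h)$ were built over a one-dimensional code. The key point is that the last coordinate ranges over $h(c)+(2e+1)\mathbb{Z}_q$, which is a coset of the perfect one-dimensional code $(2e+1)\mathbb{Z}_q$, so the last coordinate alone already forces a lot.

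First I would compute the cardinality of $\widehat{C}$. For each fixed $c \in C$ the set $\{h(c)+(2e+1)k : k \in \mathbb{Z}\}$ has exactly $t$ elements in $\mathbb{Z}_q$, and distinct $c$'s give disjoint ``fibers'' because the first $n$ coordinates already distinguish them; hence $\#\widehat{C} = t \cdot \#C = t \cdot t^n = t^{n+1}$, which matches the sphere packing count $q^{n+1}/(2e+1)^{n+1}$ in dimension $n+1$. Next I would check the packing radius is at least $e$: take two distinct codewords $(c,h(c)+(2e+1)k)$ and $(c',h(c')+(2e+1)k')$. If $c\neq c'$, then since $\mathrm{dist}(C)\geq 2e+1$ (as $C$ is $e$-perfect in the maximum metric, $\mathrm{dist}(C)=2e+1$) the maximum metric distance between them in $\mathbb{Z}_q^{n+1}$ is already $\geq 2e+1$ from the first $n$ coordinates. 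If $c=c'$ but $k\neq k'$ (mod $t$), then the last coordinates differ by a nonzero multiple of $2e+1$ in $\mathbb{Z}_q$, which has Lee distance $\geq 2e+1$ from $0$ because $q=(2e+1)t$; hence again the distance is $\geq 2e+1$. So $\mathrm{dist}(\widehat{C})\geq 2e+1$, i.e. the balls of radius $e$ around codewords are pairwise disjoint, so $e(\widehat{C})\geq e$.

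Finally, I would invoke the sphere packing condition exactly as in the linear construction: a code in $\mathbb{Z}_q^{n+1}$ with $q=(2e+1)t$, cardinality $t^{n+1}$, and packing radius at least $e$ must be perfect with packing radius exactly $e$, since $\#\widehat{C}\cdot(2e+1)^{n+1} = t^{n+1}(2e+1)^{n+1} = q^{n+1}$ leaves no room for the balls to miss any point (and the packing radius cannot exceed $e$ without violating the count). This yields $\widehat{C}\in PL^{\infty}(n+1,e,q)$.

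I do not expect any serious obstacle: the only mildly delicate point is the cardinality bookkeeping, making sure that the map $(c,k)\mapsto (c,h(c)+(2e+1)k)$ is exactly $t$-to-one after reducing $k$ mod $t$ and that different $c$ never collide — but this is immediate from looking at the first $n$ coordinates. One should also note explicitly that the construction does not assume $h$ is a homomorphism and $C$ need not be linear, which is precisely why this is stated as the ``non linear construction'' generalizing the horizontal and vertical constructions of Section~3.
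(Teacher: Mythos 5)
Your proof is correct and follows essentially the same route as the paper: compute $\#\widehat{C}=t^{n+1}$, show the minimum distance is at least $2e+1$ by separating the first $n$ coordinates (where $\mathrm{dist}(C)=2e+1$ applies) from the last coordinate (where a nonzero multiple of $2e+1$ has Lee norm at least $2e+1$), and conclude perfection from the sphere packing condition. The only difference is cosmetic: you split into the cases $c\neq c'$ and $c=c'$, whereas the paper argues by contradiction from $\|\widehat{c}_1-\widehat{c}_2\|_\infty<2e+1$.
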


\begin{proof}
Since $(2e+1)t=q$ we have $\#\widehat{C}=\#C \cdot t = t^{n+1}$, thus it suffices to prove that the minimum distance of $\widehat{C}$ is at least $2e+1$. Let $\widehat{c}_i=(c_i,h(c_i)+(2e+1)k_i)\in \widehat{C}$ with $c_i\in C$ for $i=1,2$ and suppose that $\parallel \widehat{c_1}-\widehat{c_2}\parallel_{\infty}<2e+1$. The relation $$\parallel \widehat{c_1}-\widehat{c_2}\parallel_{\infty} = \parallel c_1-c_2\parallel_{\infty}+\parallel (h(c_1)-h(c_2))+(2e+1)(k_1-k_2)\parallel_{\infty}$$ implies $\parallel c_1-c_2\parallel_{\infty}<2e$ and $\parallel (h(c_1)-h(c_2))+(2e+1)(k_1-k_2)\parallel_{\infty}<2e+1$ and so $c_1=c_2$ (because the minimum distance of $C$ is $2e+1$) and $k_1=k_2$. Therefore the minimum distance of $\widehat{C}$ is also $2e+1$ and $\widehat{C}\in PL^{\infty}(n+1,e,q)$.
\end{proof}


\begin{remark}
The non linear construction generalize horizontal and vertical constructions. Indeed, let $NL(C,h)$ be the code obtained from the non-linear construction from the code $C$ and the height function $h$. Considering $C_{a}=a+(2e+1)\in PL^{\infty}(1,e,q)$ and $h_{a}(k)=h(a+(2e+1)k)$ then $C_2(a,h_a)=NL(C_{a},h)$ and $C_1(a,h_a)=\sigma NL(C_{a},h)$ where $\sigma=(1\ 2)$.
\end{remark}

\begin{remark}
If $C$ is linear, it is possible to choose a height function in such a way that $\widetilde{C}$ is also linear, but for arbitrary choice of $h$ this is not true.
\end{remark}

\begin{remark}
Every code constructed from the non linear construction is standard. Consequently, there are codes that cannot be constructed from the non-linear construction (for example the code given in the Remark \ref{NonStandardExampleRem}). On the other hand, by Corollary \ref{MinkoskiConjStandardCor} we can obtain every linear perfect code using this construction (with good choices for the height functions) in a finite number of steps.
\end{remark}

The next construction allows to obtain perfect codes in lower dimension from a given perfect code via cartesian sections. This construction plays a fundamental role in the next section, when we introduce the concept of ordered code. 

\begin{definition}
Let $S\subseteq \Zqn$. A perfect code in $S$ is a subset $C\subseteq S$ for which there exists $e\in\mathbb{N}$ such that $S=\biguplus_{c\in C}\left( B(c,e)\cap S \right)$. In this case, $e$ is determined by $C$ (by the packing sphere condition) and we call it the packing radius of $C$.  
\end{definition}

\begin{notation}
Let $[n]=\{1,2,\ldots,n\}$. If $I\subseteq [n]$, we denote by $H_{I}=\{x\in \mathbb{Z}_{q}^{n}: x_i=0, \forall i \in I\}$ (these sets are called cartesian subgroups). We define its dimension as $\dim(H_{I})=n-\#I$.
\end{notation}

\begin{definition}
Let $I\subseteq [n]$. The orthogonal projection over $H_I$ is the unique morphism $\pi_I: \Zqn \rightarrow \Zqn$ verifying $\pi_I(e_i)=\left\{ \begin{array}{ll}
0 & \forall i \in I \\ e_i & \forall i \in I^{c}
\end{array} \right.$.
\end{definition}

\begin{notation}[Generalized balls] If $H \subseteq \mathbb{Z}_{q}^{n}$ and $e \in \mathbb{N}$, we denote by $$B(H,e)=\{x \in \Zqn: d(x,h)\leq e \textrm{ for some } h\in H\}= \bigcup_{h\in H}B(h,e).$$
\end{notation}

The following lemmas can be obtained easily from the previous definition.

\begin{lemma}\label{projLemma}
Let $I\subseteq [n]$. For $h\in H_{I}$ and $x \in \Zqn$ we have $d(x,h)\geq d(\pi_{I}(x),h)$.
\end{lemma}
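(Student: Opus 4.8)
The statement to prove is Lemma~\ref{projLemma}: for $I\subseteq[n]$, $h\in H_I$ and $x\in\Z_q^n$, we have $d(x,h)\geq d(\pi_I(x),h)$.

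The plan is to reduce everything to a coordinatewise comparison using the definition of the maximum metric. Writing $x=(x_1,\dots,x_n)$ and $h=(h_1,\dots,h_n)$ with $h_i=0$ for all $i\in I$, the orthogonal projection $\pi_I$ sets to zero exactly the coordinates indexed by $I$, so $\pi_I(x)$ has $i$-th coordinate equal to $x_i$ for $i\in I^c$ and equal to $0$ for $i\in I$. Since $d(x,h)=\max_{i=1}^n d(x_i,h_i)$ and $d(\pi_I(x),h)=\max_{i=1}^n d((\pi_I(x))_i,h_i)$, it suffices to check that for each fixed $i$ we have $d((\pi_I(x))_i,h_i)\leq d(x_i,h_i)$, and then take the maximum over $i$.

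Now I would split into the two cases. For $i\in I^c$, both $\pi_I(x)$ and $x$ agree in the $i$-th coordinate, so $d((\pi_I(x))_i,h_i)=d(x_i,h_i)$ trivially. For $i\in I$, we have $(\pi_I(x))_i=0$ and also $h_i=0$ because $h\in H_I$; hence $d((\pi_I(x))_i,h_i)=d(0,0)=0\leq d(x_i,h_i)$, the last inequality being just nonnegativity of the circular distance $d$ on $\Z_q$. In either case the per-coordinate inequality holds, so taking the maximum over $i\in[n]$ yields $d(\pi_I(x),h)=\max_i d((\pi_I(x))_i,h_i)\leq \max_i d(x_i,h_i)=d(x,h)$, which is exactly the claim.

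There is essentially no obstacle here: the only thing being used is that $\pi_I$ can only kill coordinates where $h$ already vanishes (that is the defining property of $H_I$), so projecting $x$ never increases any coordinate's contribution to the max-distance from $h$. The ``hard part'', if any, is purely bookkeeping: being careful that $\pi_I(e_i)=0$ precisely for $i\in I$ matches the condition $h_i=0$ precisely for $i\in I$, so that the zeroed coordinates of $\pi_I(x)$ line up with the zero coordinates of $h$. Once that alignment is noted, the coordinatewise argument and the $\max$ is immediate.
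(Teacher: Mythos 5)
Your proof is correct, and it is the same straightforward coordinatewise argument the paper has in mind (the paper simply states that this lemma "can be obtained easily from the previous definition" and omits the details). Nothing further is needed.
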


\begin{lemma}\label{ExtBallLemma}
If $x \in \Zqn$, then $x\in B\left(H_{I},e\right)\leftrightarrow |x_i|_{1}\leq e$ $\forall i \in I$.
\end{lemma}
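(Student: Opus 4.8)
The plan is to prove the two implications separately by simply unwinding the definition of the generalized ball $B(H_I,e)$ together with the coordinatewise nature of the maximum metric $d$ on $\Zqn$. Recall that $d(x,h)=\max_{j}d(x_j,h_j)$ and that $H_I$ consists precisely of those $h$ with $h_i=0$ for all $i\in I$.

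For the forward direction I would assume $x\in B(H_I,e)$, so there is some $h\in H_I$ with $d(x,h)\le e$. Since the metric is the maximum over coordinates, $d(x_i,h_i)\le d(x,h)\le e$ for every $i$; in particular, for $i\in I$ we have $h_i=0$ by definition of $H_I$, whence $|x_i|_1=d(x_i,0)=d(x_i,h_i)\le e$. (This is exactly Lemma~\ref{projLemma} applied in the single relevant coordinate, so one could also cite that lemma directly.) For the converse I would exhibit an explicit witness: given $x$ with $|x_i|_1\le e$ for all $i\in I$, put $h=\pi_I(x)$, that is, $h_i=0$ for $i\in I$ and $h_i=x_i$ for $i\notin I$. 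Then $h\in H_I$, and computing $d(x,h)=\max_j d(x_j,h_j)$ we see that the coordinates $j\notin I$ contribute $d(x_j,x_j)=0$, while the coordinates $i\in I$ contribute $d(x_i,0)=|x_i|_1\le e$; hence $d(x,h)\le e$ and $x\in B(h,e)\subseteq B(H_I,e)$.

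There is essentially no obstacle here: the only point worth making explicit is that, because the maximum metric is coordinatewise, choosing the witness $h$ to agree with $x$ on $I^{c}$ is free of charge, so the entire constraint defining membership in $B(H_I,e)$ is carried by the coordinates indexed by $I$. Everything else is a direct translation of the definitions, and the argument is short.
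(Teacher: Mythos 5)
Your proof is correct and is exactly the routine unwinding of definitions that the paper has in mind (the paper omits the argument entirely, remarking only that the lemma "can be obtained easily from the previous definition"). Both directions are handled properly, and the choice of witness $h=\pi_I(x)$ for the converse is the natural one.
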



\begin{remark}
If $C \in LPL^{\infty}(n,e,q)$ and $f_C: \Zqn \rightarrow C$ is the associated error correcting function, then $B(H,e)\cap C = f_{C}(H)$.
\end{remark}

\begin{definition}
Let $C\subseteq \Zqn$ and $I\subseteq [n]$. The cartesian section of $C$ (respect to the cartesian subgroup $H_I$) is given by $C\langle I \rangle := \pi_{I}(B(H_{I},e)\cap C)=\pi_{I}\circ f_{C}(H_I)$.
\end{definition}

\begin{proposition}[Section construction] \label{SectionConstrProp}
If $C \in PL^{\infty}(n,e,q)$ and $I\subseteq [n]$, then $C\langle I \rangle$ is a perfect code in $H_{I}$ with packing radius $e$.
\end{proposition}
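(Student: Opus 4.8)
# Proof Proposal for Proposition (Section Construction)

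The plan is to show that $C\langle I\rangle=\pi_I(B(H_I,e)\cap C)$ partitions $H_I$ into the intersections of radius-$e$ balls (centered at its points) with $H_I$, i.e. that $H_I=\biguplus_{c'\in C\langle I\rangle}(B(c',e)\cap H_I)$. There are three things to verify: (1) every point of $H_I$ lies in such a ball; (2) the balls are pairwise disjoint within $H_I$; and (3) the map $f_C(H_I)\to C\langle I\rangle$, $c\mapsto \pi_I(c)$, is a bijection, so the index set is honestly described as $\pi_I(B(H_I,e)\cap C)$. The main tool is Lemma \ref{projLemma}, which says projecting onto $H_I$ never increases distance to a point already in $H_I$, together with Lemma \ref{ExtBallLemma} characterizing $B(H_I,e)$.

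First I would prove the covering part. Take $x\in H_I$. Since $C$ is perfect in $\Zqn$ there is a unique $c=f_C(x)\in C$ with $d(x,c)\le e$. Because $x\in H_I$ we have $|x_i|_1=0\le e$ for all $i\in I$, and $d(x,c)\le e$ forces $|c_i|_1=d(x_i,c_i)\le e$ for each $i\in I$; hence $c\in B(H_I,e)$ by Lemma \ref{ExtBallLemma}, so $c\in B(H_I,e)\cap C$ and $c'=\pi_I(c)\in C\langle I\rangle$. Now $d(x,c')\le d(x,c)\le e$ is not quite immediate, so instead use that $x$ and $c'$ agree outside $I$ is false in general; rather, observe $d(x,c')=\max_{i\in I^c}d(x_i,c_i)\le d(x,c)\le e$ since the coordinates in $I$ contribute $0$ to $d(x,c')$ (as $x_i=c'_i=0$ there). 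Thus $x\in B(c',e)\cap H_I$, giving the covering.

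Next, disjointness. Suppose $x\in H_I$ lies in $B(c'_1,e)\cap B(c'_2,e)$ with $c'_j=\pi_I(c_j)$, $c_j\in B(H_I,e)\cap C$. I want to lift this to a collision in $\Zqn$. The natural idea: for $c\in B(H_I,e)\cap C$, consider the "lift" $\tilde{c}\in\Zqn$ obtained from $c$ by replacing each coordinate $c_i$, $i\in I$, by $0$; note $\tilde c=\pi_I(c)=c'$. We have $d(x,c)\le \max\{d(x,c'),\max_{i\in I}d(x_i,c_i)\}=\max\{d(x,c'),\max_{i\in I}|c_i|_1\}\le e$ using $x_i=0$ for $i\in I$, $c\in B(H_I,e)$, and $d(x,c')\le e$. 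So both $c_1,c_2$ are codewords at distance $\le e$ from the same $x\in\Zqn$; by perfection of $C$, $c_1=c_2$, hence $c'_1=c'_2$. The same computation also settles step (3): if $c_1,c_2\in B(H_I,e)\cap C$ have $\pi_I(c_1)=\pi_I(c_2)$, pick any $x\in H_I$ with $d(x,\pi_I(c_1))\le e$ (e.g. $x=\pi_I(c_1)$ itself, valid since $\pi_I(c_1)\in H_I$ and... wait, need $d(\pi_I(c_1),c_j)\le e$: indeed $d(\pi_I(c_1),c_j)\le\max\{d(\pi_I(c_1),\pi_I(c_j)),\max_{i\in I}|c_{j,i}|_1\}=\max\{0,\le e\}\le e$), so $c_1=c_2$ by perfection. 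Hence $\#C\langle I\rangle=\#(B(H_I,e)\cap C)$ and everything is consistent.

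The main obstacle is bookkeeping the coordinate-wise max-metric estimates cleanly, specifically making sure each inequality $d(x,c')\le e$, $d(x,c)\le e$, etc., is justified by the precise interplay of "$x\in H_I$", "$c\in B(H_I,e)$", and Lemma \ref{projLemma}; there is no deep difficulty, but one must be careful that the coordinates in $I$ are handled correctly (they are $0$ in $x$ and in $c'$, and bounded by $e$ in $c$). Once the bijection $B(H_I,e)\cap C\to C\langle I\rangle$ is established and the covering/disjointness shown, the packing–sphere count automatically gives $\#C\langle I\rangle\cdot(2e+1)^{\dim H_I}=q^{\dim H_I}$, so the packing radius is exactly $e$, completing the proof.
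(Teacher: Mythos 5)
Your proof is correct and follows essentially the same route as the paper's: the covering half projects $f_C(x)$ for $x \in H_I$ and uses the distance-decreasing property of $\pi_I$, and the packing half exploits the bound $|c_i|_1 \le e$ for $i \in I$ on the lifts in $B(H_I,e)\cap C$ to reduce everything to the perfection of $C$ in $\mathbb{Z}_q^n$. The only cosmetic difference is that you phrase disjointness via a common point $x$ lying within distance $e$ of two lifted codewords, whereas the paper bounds $d(c_1,c_2)\le 2e$ coordinatewise and contradicts the minimum distance $2e+1$; the two arguments are equivalent.
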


\begin{proof}
Let $h \in H_{I}$ and $c=\pi_{I}(f_{C}(h)) \in C\langle I \rangle$. By Lemma \ref{projLemma} $d(c,h)\leq d(f_{C}(h),h)\leq e$ and we have that $h\in B(c,e)$, so $H \subseteq \bigcup_{c\in C\langle I \rangle}B(c,e)$. Since $H=\bigcup_{c\in C\langle I \rangle}\left( B(c,e)\cap H\right)$ the covering radius of $C\langle I \rangle$ is at most $e$ (as code in $H_I$). On the other hand, if $\widehat{c}_1, \widehat{c}_2 \in C\langle I \rangle$ verify $d(\widehat{c}_1,\widehat{c}_2)\leq 2e$ and we express $\widehat{c}_{i}=\pi_{I}(c_i)$ for $i=1,2$ where $c_i \in B(H,e)\cap C$, we have \begin{equation}\label{Eq2eP1}
|c_{1}(i)-c_{2}(i)|_{1} = |\widehat{c}_1(i)-\widehat{c}_2(i)|_{1} \leq 2e, \ \forall i \in I^{c},
\end{equation}
and by Lemma \ref{ExtBallLemma} we have
\begin{equation}\label{Eq2eP2}
|c_{1}(i)-c_{2}(i)|_{1} \leq |c_{1}(i)|_{1}+|c_{2}(i)|_{1}\leq 2e \ \forall i \in I.
\end{equation}
Equations (\ref{Eq2eP1}) and (\ref{Eq2eP2}) imply $d(c_1,c_2)\leq 2e$ and since $C$ has packing radius $e$ we have $c_1=c_2$ so $\widehat{c}_1=\widehat{c}_{2}$. Therefore $C\langle I \rangle$ has minimum distance $d\geq 2e+1$ and its packing radius is at least $e$. We conclude that $C\langle I \rangle$ is a perfect code in $H_I$ with packing radius $e$.
\end{proof}

In the linear case, under some conditions we can prove that the resulting code is also linear.

\begin{proposition}\label{LinearSectionProp}
If $C \in LPL^{\infty}(n,e,q)$ is of type $i$ for all $i\in I$, then $C\langle I \rangle$ is a linear perfect code in $H_{I}$ with packing radius $e$. Moreover, $C\langle I \rangle=\pi_{I}(C)$.
\end{proposition}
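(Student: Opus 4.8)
The plan is to treat the two halves of the statement separately. By Proposition~\ref{SectionConstrProp} we already know that $C\langle I\rangle$ is a perfect code in $H_I$ with packing radius $e$ for any $C\in PL^{\infty}(n,e,q)$, so under the type hypotheses the only new content is the identity $C\langle I\rangle=\pi_I(C)$. Once that identity is established, linearity is free: $\pi_I$ is a group homomorphism and $C$ is a subgroup of $\Z_q^n$, so $\pi_I(C)$ is a subgroup, and it is contained in $H_I$ since $\pi_I$ maps into $H_I$.

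For the identity, the inclusion $C\langle I\rangle=\pi_I\bigl(B(H_I,e)\cap C\bigr)\subseteq\pi_I(C)$ is immediate from $B(H_I,e)\cap C\subseteq C$. For the reverse inclusion I would fix $c=(c_1,\dots,c_n)\in C$ and construct a codeword $c'\in B(H_I,e)\cap C$ with $\pi_I(c')=\pi_I(c)$. The key elementary observation is that $\{-e,\dots,e\}$ is a complete set of representatives of $\Z_q/(2e+1)\Z_q$: it has $2e+1$ elements, pairwise incongruent modulo $2e+1$, and the subgroup $(2e+1)\Z_q$ has index exactly $2e+1$ in $\Z_q$ because $2e+1\mid q$. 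Hence, for each $i\in I$ there is an integer $k_i$ with $|c_i-(2e+1)k_i|_1\le e$. Because $C$ is of type $i$ for every $i\in I$, translation by $(2e+1)e_i$ (and hence by any integer multiple) preserves $C$, so $c':=c-\sum_{i\in I}(2e+1)k_ie_i\in C$. By construction $|c'_i|_1\le e$ for all $i\in I$, so $c'\in B(H_I,e)$ by Lemma~\ref{ExtBallLemma}; moreover $c'$ and $c$ differ only in coordinates indexed by $I$, which $\pi_I$ annihilates, so $\pi_I(c')=\pi_I(c)$. Therefore $\pi_I(c)=\pi_I(c')\in C\langle I\rangle$, which gives $\pi_I(C)\subseteq C\langle I\rangle$ and hence equality.

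Combining the two halves: $C\langle I\rangle=\pi_I(C)$ is a subgroup of $H_I$, and by Proposition~\ref{SectionConstrProp} it is a perfect code in $H_I$ with packing radius $e$, which is exactly the assertion.

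The main (indeed essentially the only) obstacle is the inclusion $\pi_I(C)\subseteq C\langle I\rangle$: one must see that the type-$i$ hypotheses for the indices $i\in I$ let one ``push'' an arbitrary codeword into the generalized ball $B(H_I,e)$ one coordinate at a time without leaving $C$, and this in turn rests on the fact that the symmetric residue system $\{-e,\dots,e\}$ is a transversal of $(2e+1)\Z_q$ in $\Z_q$. The rest is bookkeeping with the definitions of $\pi_I$, $H_I$, and the type property.
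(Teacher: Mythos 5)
Your proof is correct and follows essentially the same route as the paper: reduce to the identity $C\langle I\rangle=\pi_I(C)$, and obtain the reverse inclusion by translating an arbitrary codeword into $B(H_I,e)$ coordinate-by-coordinate using the type-$i$ hypotheses and Lemma~\ref{ExtBallLemma}. The only (welcome) difference is that you make explicit the transversal fact about $\{-e,\dots,e\}$ guaranteeing the existence of the integers $k_i$, which the paper leaves implicit.
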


\begin{proof}
We just need to check linearity and for this it suffices to prove that $\pi_{I}(C)=C\langle I \rangle$. It is clear that $C\langle I \rangle= \pi_I(C \cap B(H_I,e)) \subseteq \pi_I(C)$. For the other inclusion, let $c \in C$ and for each $i \in I$ we consider $k_i \in \mathbb{Z}$ such that $|c+(2e+1)k_i|_1 \leq e$. Since $C$ is of type $i$ for all $i\in I$, then $(2e+1)k_ie_i \in C$ and also the vector $v = \sum_{i\in I}(2e+1)k_ie_i \in C$. By Lemma \ref{ExtBallLemma} $c+v \in C \cap B(H_I,e)$ and $(c+v)_j=c_j$ for all $j\in I^{c}$, therefore $\pi_{I}(c)=\pi_{I}(c+v)\in \pi_{I}(B(H_I,e)\cap C)= C \langle I \rangle$ and we have $\pi_I(C)\subseteq C \langle I \rangle$.
\end{proof}


\section{Perfect codes in arbitrary dimensions}

\subsection{Permutation associated with perfect codes}

The type of a code is an important concept when we deal with two-dimensional perfect codes, in part because every two-dimensional linear perfect code is isometric to a code of type $2$ which have an upper triangular generator matrix (Theorem \ref{GeneratorForLPL2D}). This last property is false for greater dimensions so we need a more general concept in order to describe all perfect codes with given parameters $(n,e,q)$.

\begin{notation}
For $C \in LPL^{\infty}(n,e,q)$ we denote by $$\tau(C)=\max\{i : 1\leq i \leq n, C \textrm{ is of type }i\}.$$
\end{notation}

\begin{definition}\label{PermutationAssocDef}
Let $C \in LPL^{\infty}(n,e,q)$ with $q=(2e+1)t$ and $t>1$. We consider the following sequence:
$$\left\{ \begin{array}{ll} \wp_1= \tau(C),\ J_1=\{\wp_1\},\ C_1=C\langle J_1 \rangle & \\ 
\wp_{i+1}=\tau(C_i),\ J_{i+1}=J_{i}\cup \{\wp_{i+1}\},\ C_{i+1}=C\langle J_{i+1} \rangle & \textrm{for $1\leq i <n$}\end{array}  \right.$$
The permutation of $[n]$ associated with $C$ is $\wp(C)=(\wp_1,\wp_2,\ldots, \wp_{n})$.
\end{definition}

\begin{remark}
Since we start from a linear code C, Minkowski-Hajos Theorem (Theorem \ref{MinkowskiConjTh}) and Proposition \ref{LinearSectionProp} guarantee the existence of $\wp_i$ in each step and the linearity of the corresponding code $C_i$. Since $(2e+1)e_k \not\in H_I$ for $k\in I$, we have that the numbers $\wp_i$ are pairwise different, so $\wp \in S_n$. 
\end{remark}

\begin{example}\label{PartitionExample}
We consider the code $C=\mbox{span} \left( \begin{matrix}
1 & 3 & 0 & 0 \\ 0 & 0 & 1 & 3 \\ 3 & 0 & 1 & 0 \\ 0 & 0 & 3 & 0
\end{matrix} \right)$ over $\mathbb{Z}_{81}^4$. This code is perfect with parameters $(n,e,q)=(4,1,81)$. Let us calculate its associated permutation. In the first step we have: $$\bullet \quad \wp_1=\tau(C)=3, J_1=\{3\}, C_1=C\langle 3 \rangle= \mbox{span} \left( \begin{matrix}
1 & 3 & 0 & 0 \\ 0 & 0 & 0 & 3 \\ 3 & 0 & 0 & 0 \\ 0 & 0 & 0 & 0
\end{matrix} \right),$$ in the second step we have:
$$\bullet \quad \wp_2=\tau(C_1)=4, J_2=\{3,4\}, C_2=C\langle 3,4 \rangle= \mbox{span} \left( \begin{matrix}
1 & 3 & 0 & 0 \\ 0 & 0 & 0 & 0 \\ 3 & 0 & 0 & 0 \\ 0 & 0 & 0 & 0
\end{matrix} \right),$$  in the third step we have:
$$\bullet \quad \wp_3=\mathcal{I}(C_2)=1, J_2=\{1,3,4\}, C_3=C\langle 1,3,4 \rangle= \mbox{span} \left( \begin{matrix}
0 & 3 & 0 & 0 \\ 0 & 0 & 0 & 0 \\ 0 & 0 & 0 & 0 \\ 0 & 0 & 0 & 0
\end{matrix} \right),$$ and in the last step we have $\wp_4=\tau(C_3)=2$
so, the permutation associated with $C$ is $\wp(C)=(3,4,1,2)$.
\end{example}

\begin{definition}
We say that a perfect code $C \in LPL^{\infty}(n,e,q)$ is ordered if its associated partition is given by $\wp(C)=(n,n-1,\ldots,2,1)$.
\end{definition}

\begin{proposition}\label{TransCor}
For all $C \in LPL^{\infty}(n,e,q)$ there exists $\theta=\theta_{C} \in S_n$ such that the code $\theta(C):=\left\{\left(c_{\theta^{-1}(1)},\ldots,c_{\theta^{-1}(n)}\right): (c_1,\ldots,c_n)\in C\right\}$ is ordered. 
\end{proposition}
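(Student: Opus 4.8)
The plan is to show that a suitable coordinate permutation transforms any linear perfect code into an ordered one, and the natural way to do this is by reverse-engineering the defining sequence of Definition~\ref{PermutationAssocDef}. Given $C \in LPL^{\infty}(n,e,q)$, form its associated permutation $\wp(C)=(\wp_1,\wp_2,\ldots,\wp_n) \in S_n$; this is well defined by the remark following Definition~\ref{PermutationAssocDef}. I would set $\theta := \theta_C$ to be the permutation sending $\wp_i \mapsto n+1-i$ for each $i \in [n]$ (equivalently, $\theta^{-1}$ sends $n+1-i \mapsto \wp_i$), and claim that $\theta(C)$ is ordered, i.e. $\wp(\theta(C)) = (n, n-1, \ldots, 2, 1)$.

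The first observation needed is that relabelling coordinates by $\theta$ is an isometry of $\Zqn$ (it is one of the elements of the isometry group $\mathcal{G}$), hence it commutes with all the operations used to build $\wp(C)$: it preserves linearity, it preserves perfection and the packing radius, it carries "$C$ is of type $j$" to "$\theta(C)$ is of type $\theta(j)$", and it is compatible with cartesian sections in the sense that $\theta(C)\langle \theta(J) \rangle = \theta\bigl(C\langle J\rangle\bigr)$ for $J \subseteq [n]$ (where $\theta(J)=\{\theta(j): j\in J\}$, and $\theta$ on the section means the relabelling of the coordinates of $H_J$ induced by $\theta$). All of these are routine verifications from the definitions of $\tau$, $\pi_I$, and $C\langle I\rangle$, together with the fact that $\theta$ permutes the standard basis vectors. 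In particular $\tau(\theta(C_i)) = \theta\bigl(\tau(C_i)\bigr)$ for each of the intermediate codes $C_i$ appearing in the construction of $\wp(C)$.

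With this compatibility in hand, I would run the defining sequence of Definition~\ref{PermutationAssocDef} for $\theta(C)$ and for $C$ in parallel, proving by induction on $i$ that the $i$-th terms match under $\theta$: namely that the $i$-th entry of $\wp(\theta(C))$ equals $\theta(\wp_i)$, that the set $J'_i$ built for $\theta(C)$ equals $\theta(J_i)$, and that the $i$-th intermediate code for $\theta(C)$ is $\theta(C_i)$. The base case is $\wp_1(\theta(C)) = \tau(\theta(C)) = \theta(\tau(C)) = \theta(\wp_1)$, whence $J'_1 = \{\theta(\wp_1)\} = \theta(J_1)$ and the first section of $\theta(C)$ is $\theta(C)\langle \theta(J_1)\rangle = \theta(C_1)$. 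The inductive step is identical, using $\tau(\theta(C_i)) = \theta(\tau(C_i)) = \theta(\wp_{i+1})$ and the section-compatibility identity. Therefore $\wp(\theta(C)) = \bigl(\theta(\wp_1), \theta(\wp_2), \ldots, \theta(\wp_n)\bigr) = (n, n-1, \ldots, 2, 1)$ by our choice of $\theta$, so $\theta(C)$ is ordered.

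The main obstacle — really the only non-formal point — is establishing cleanly that $\theta$ commutes with the section operation, i.e. $\theta(C)\langle \theta(J)\rangle = \theta(C\langle J\rangle)$; one must track how the orthogonal projection $\pi_J$, the generalized ball $B(H_J,e)$, and the error-correcting function $f_C$ all transform under a coordinate relabelling, and check that the composite $\pi_J \circ f_C$ behaves equivariantly. Once that equivariance is recorded (it follows because $\theta$ preserves the maximum metric and hence maps balls to balls and $H_J$ to $H_{\theta(J)}$), everything else is bookkeeping. I would state the equivariance as a short preliminary observation before the induction to keep the argument readable.
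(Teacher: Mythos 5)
Your choice of $\theta$ (sending $\wp_i\mapsto n+1-i$) and the overall plan of running Definition~\ref{PermutationAssocDef} for $C$ and $\theta(C)$ in parallel are exactly what the paper does; the paper just phrases the induction by exhibiting, at each stage, an explicit codeword $c\in C$ with $c_{\wp_{i+1}}=2e+1$ and $c_k=0$ for $k\notin\{\wp_1,\dots,\wp_{i+1}\}$, whose image $\theta(c)$ witnesses that $\theta(C)\langle n,\dots,n+1-i\rangle$ is of type $n-i$, whereas you package the same content as an equivariance statement. Your section-compatibility identity $\theta(C)\langle\theta(J)\rangle=\theta(C\langle J\rangle)$ is correct and is indeed routine, since $\theta(H_J)=H_{\theta(J)}$, $\theta$ maps balls to balls, and $\theta\circ\pi_J=\pi_{\theta(J)}\circ\theta$.

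There is, however, one step that is false as you state it: the claim that $\tau(\theta(D))=\theta(\tau(D))$ follows ``in particular'' from the fact that $\theta$ carries type $j$ to type $\theta(j)$. The type-equivariance only tells you that the \emph{set} of types of $\theta(D)$ is the $\theta$-image of the set of types of $D$; since $\tau$ is defined as a \emph{maximum}, and maxima do not commute with arbitrary permutations of $[n]$, the identity fails in general. (Take $n=3$, $D$ of types $1$ and $3$, and $\theta=(1\ 3)$: then $\theta(\tau(D))=1$ but $\tau(\theta(D))=3$.) What saves your argument is not equivariance of $\tau$ but the specific choice of $\theta$: at stage $i$ the code $\theta(C_i)$ lies in $H_{\{n,\dots,n+1-i\}}$, and no code contained in $H_I$ can be of type $k$ for $k\in I$ (as noted in the remark after Definition~\ref{PermutationAssocDef}), so the only available indices are $1,\dots,n-i$; since $\theta(C_i)$ is of type $\theta(\wp_{i+1})=n-i$, which is the largest available index, you get $\tau(\theta(C_i))=n-i$ as required. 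You should insert this maximality observation in place of the claimed commutation of $\tau$ with $\theta$; with that repair the proof is complete and matches the paper's.
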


\begin{proof}
Let $C \in LPL^{\infty}(n,e,q)$ with $\wp(C)=(\wp_1,\wp_2,\ldots, \wp_n)$ and $\theta$ be the permutation given by $\theta(\wp_i)=n+1-i$. For all $i$ with $1\leq i < n$ we have $(2e+1)e_{\wp_{i+1}} \in C \langle \wp_1,\ldots, \wp_{i} \rangle = \pi_{H_{I}}(C)$ where $I=\{\wp_1,\ldots,\wp_i\}$, so there exists $c \in C \cap \pi_{H_{I}}^{-1}((2e+1)e_{\wp_{i+1}})$. We have $c_{\wp_{i+1}}=2e+1$ and $c_{k}=0$ for $k \not\in \{\wp_1,\wp_2,\ldots,\wp_{i+1}\}$. Since $\theta(c)_{i}=c_{\theta^{-1}(i)}$ we have $\theta(c)_{n-i}=2e+1$ and $\theta(c)_{k}=0$ for $k: n\geq k \geq n-i$, so $$
(2e+1)e_{n-i}=\pi_{H_{\theta(I)}}\left( \theta(c)  \right) \in \theta(C)\langle n, n-1, \ldots, n+1-i \rangle$$ for $1\leq i <n $. This last condition together with the fact that $(2e+1)e_{n}=\theta \left((2e+1)e_{\wp_{1}} \right) \in \theta(C)$ imply $\wp\left( \theta(C) \right)=(n,n-1,\ldots,1)$.
\end{proof}

\begin{notation}
We denote by $LPL^{\infty}(n,e,q)_{o}= \{ C \in LPL^{\infty}(n,e,q): C \textrm{ is ordered} \}$. 
\end{notation}

\begin{example}
Let $C$ be the code defined in Example \ref{PartitionExample}. The permutation $\theta=(1\ 2)(3\ 4)$ verify $\theta(\wp_i)=5-i$, so the resulting code $\theta(C)\in LPL^{\infty}(4,1,81)_{o}$. We remark that this permutation is not unique, for example if we take $\tau=(1\ 3\ 4\ 2)$ we have $\tau(C) \in LPL^{\infty}(4,1,81)_{o}$ and $\tau(C)\neq \theta(C)$.
\end{example}

\subsection{Perfect matrices}
 
In this section we characterize matrices associated with perfect codes in the maximum metric.

\begin{definition}
Let $q=(2e+1)t$. A matrix $M \in \nabla_{n}(2e+1)$ is a $(e,q)$-perfect matrix if there exist matrices $A \in \nabla_n(t)$ and $B \in \nabla_{n}(1)$ such that $AM=qB$.
\end{definition}

\begin{remark}
For $n=2$ a matrix $M=\left( \begin{matrix} 2e+1 & a \\ 0 & 2e+1
\end{matrix}\right)$ is $(e,q)$-perfect if only if there exists $x,y \in \mathbb{Z}$ satisfying 
$ \left( \begin{matrix} t & x \\ 0 & t
\end{matrix}\right) \left( \begin{matrix} 2e+1 & a \\ 0 & 2e+1
\end{matrix}\right)= \left( \begin{matrix} q & qz \\ 0 & q
\end{matrix}\right)$ and this is equivalent to $ta+(2e+1)x=qz$ or $qz-(2e+1)x=ta$. This last diophantine equation has solution if and only if $\gcd(q,2e+1)=2e+1 \mid ta$ which is equivalent to $a=kh_1$ for some $k\in \mathbb{Z}$ (where $h_1=\frac{2e+1}{\gcd(2e+1,t)}$). Is summary, a $2\times 2$ matrix is a $(e,q)$-perfect matrix if and only if it is the generating matrix of a type $2$ perfect code in $LPL^{\infty}(2,e,q)$. 
\end{remark}

\begin{proposition}\label{PerfCondProp}
If $q=(2e+1)t$ and $M$ is a $n\times n$ integer matrix with rows $M_1,M_2,\ldots,M_{n}$, then $M$ is a $(e,q)$-perfect matrix if and only if the following condition are satisfied:
\begin{enumerate}
\item $M$ is upper triangular,
\item $M_{ii}=2e+1$,
\item $t\overline{M_i} \in \textrm{span}(\overline{M_{i+1}},\ldots,\overline{M_{n}})$ for $1\leq i < n$,
\end{enumerate} 
where for $x\in\mathbb{Z}^{n}$ we denote by $\overline{x}=x+q\mathbb{Z}^{n} \in \mathbb{Z}_{q}^{n}$ the residual class of $x$ modulo $q$.
\end{proposition}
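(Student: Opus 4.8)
The plan is to prove the biconditional by unwinding the definition of a $(e,q)$-perfect matrix, which asks for the existence of $A\in\nabla_n(t)$ and $B\in\nabla_n(1)$ with $AM=qB$, and to translate the matrix equation $AM=qB$ into the row conditions (1)--(3). The forward direction ($M$ perfect $\Rightarrow$ (1)--(3)) is essentially bookkeeping: condition (1) holds by hypothesis since $M\in\nabla_n(2e+1)$, condition (2) likewise by hypothesis. For condition (3), I would fix $i$ and look at the $i$-th row of the equation $AM=qB$. Writing $A=(a_{jk})$ with $a_{ii}=t$, $a_{jk}=0$ for $j>k$, the $i$-th row of $AM$ is $\sum_{k\ge i}a_{ik}M_k = t M_i + \sum_{k>i}a_{ik}M_k$, and this equals $q$ times the $i$-th row of $B$, hence lies in $q\mathbb{Z}^n$. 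Reducing mod $q$ gives $t\overline{M_i} + \sum_{k>i}a_{ik}\overline{M_k}=0$ in $\mathbb{Z}_q^n$, i.e. $t\overline{M_i}\in\mathrm{span}(\overline{M_{i+1}},\ldots,\overline{M_n})$, which is (3).

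For the converse, I would assume (1)--(3) and construct $A$ and $B$ explicitly, working from the bottom row upward. Condition (3) for each $i<n$ gives integers $c_{i,i+1},\ldots,c_{i,n}$ with $tM_i + \sum_{k>i}c_{ik}M_k \equiv 0\pmod{q}$ coordinatewise; that is, $tM_i+\sum_{k>i}c_{ik}M_k = q b_i$ for some $b_i\in\mathbb{Z}^n$. Define $A$ to be the upper triangular matrix with $a_{ii}=t$, $a_{ik}=c_{ik}$ for $k>i$, and $a_{ik}=0$ for $k<i$; then $A\in\nabla_n(t)$ by construction. The product $AM$ has $i$-th row equal to $t M_i+\sum_{k>i}c_{ik}M_k = q b_i$ for $i<n$, and for $i=n$ the $n$-th row is $t M_n = t(2e+1)e_n\cdot(\text{stuff})$ — more precisely, since $M$ is upper triangular with $M_{nn}=2e+1$ and zeros to the left, $M_n=(2e+1)e_n$, so $tM_n=(2e+1)t\,e_n=q\,e_n\in q\mathbb{Z}^n$. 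Hence $AM=qB$ where $B$ is the integer matrix whose rows are $b_1,\ldots,b_{n-1},e_n$. It remains to check $B\in\nabla_n(1)$, i.e. $B$ is upper triangular with $1$'s on the diagonal. Upper-triangularity: the $i$-th row of $AM$ is a $\mathbb{Z}$-combination of $M_i,M_{i+1},\ldots,M_n$, each of which (being rows of the upper triangular $M$) has zero entries in positions $1,\ldots,i-1$, so the $i$-th row of $qB$ and hence of $B$ vanishes in those positions. Diagonal: the $(i,i)$ entry of $AM$ equals $a_{ii}M_{ii}=t(2e+1)=q$ (the other terms $a_{ik}M_{ki}$ with $k>i$ vanish since $M$ is upper triangular, so $M_{ki}=0$), hence $q b_{ii}=q$ and $b_{ii}=1$.

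The only genuine subtlety — and the step I would flag as the main obstacle — is making sure the choice of the coefficients $c_{ik}$ from condition (3) can be organized into a single upper triangular matrix $A$ without circular dependencies; but because (3) for row $i$ only involves rows $i+1,\ldots,n$ of $M$ and imposes no constraint coupling different $i$'s, the coefficients for distinct rows can be chosen completely independently, so there is no obstruction: we simply pick, for each $i$ separately, a witness to the membership in (3). A secondary point worth stating carefully is that the reduction "$t\overline{M_i}\in\mathrm{span}(\overline{M_{i+1}},\ldots,\overline{M_n})$ in $\mathbb{Z}_q^n$'' lifts to an honest integer identity $tM_i+\sum_{k>i}c_{ik}M_k=qb_i$ in $\mathbb{Z}^n$ — this is immediate since a coset equation $\overline{v}=0$ in $\mathbb{Z}_q^n$ is exactly the statement $v\in q\mathbb{Z}^n$. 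With these observations the argument closes, and I would present it in the two-direction format above, keeping the explicit construction of $A$ and $B$ as the core of the converse.
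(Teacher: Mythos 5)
Your proof is correct and follows essentially the same route as the paper's: both directions amount to translating the matrix identity $AM=qB$ row by row into the integer congruences $tM_i \equiv -\sum_{k>i}a_{ik}M_k \pmod{q}$, with the coefficients for distinct rows chosen independently. You are in fact slightly more careful than the paper in one respect --- you verify explicitly that the constructed $B$ lands in $\nabla_n(1)$ (upper triangular with unit diagonal), a point the paper's proof leaves implicit.
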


\begin{proof}
Conditions (1) and (2) are equivalent to $M \in \nabla_n(2e+1)$ and condition (3) is equivalent to the existence of integers $\alpha_{ij} \in \mathbb{Z}$ and vectors $B_i \in \mathbb{Z}^{n}$ for $1\leq i <j\leq n$ verifying $ tM_i = \sum_{j=i+1}^n \alpha_{ij}M_j+qN_i$. These equations can be expressed in matricial form as $AM=qB$ where the matrix $A$ is upper triangular with $A_{ij}=\left\{ \begin{array}{ll}
t & \textrm{for } i=j \\ -\alpha_{ij} & \textrm{for }i<j  
\end{array} \right.$ and $B$ has rows $B_1,B_2,\ldots,B_n$.
\end{proof}

\begin{lemma}\label{RelPerfLemma}
Let $q=(2e+1)t$, $C \in LPL^{\infty}(n,e,q)$ and $H$ be a $k$-dimensional cartesian subgroup of $\Zqn$. If $S\subseteq C \cap H$ and $\#S=t^{k}$, then $S=C \cap H$ and the code $S$ is a perfect code in $H$ with packing radius $e$.
\end{lemma}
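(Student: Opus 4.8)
The plan is to show that $S$ already "fills up" $H$ in the sense that every point of $H$ lies in a radius-$e$ ball about some element of $S$, and then use a counting argument to conclude $S = C\cap H$. Write $H = H_I$ for a suitable $I\subseteq[n]$ with $\#I = n-k$, so $\dim(H) = k$. By the Section construction (Proposition \ref{SectionConstrProp}), $C\langle I\rangle = \pi_I(B(H_I,e)\cap C)$ is a perfect code in $H_I$ with packing radius $e$; since $C$ is perfect with packing radius $e$, the sphere-packing condition gives $\#C\langle I\rangle = t^k$. The key observation is that for $c\in C\cap H$ we have $c\in B(H_I,e)\cap C$ trivially (indeed $c\in H_I$), and $\pi_I(c) = c$, so $C\cap H \subseteq C\langle I\rangle$ as subsets of $H$. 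In particular $\#(C\cap H)\le t^k$.

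Next I would establish the reverse-type inclusion $S\subseteq C\cap H \subseteq C\langle I\rangle$ together with $\#S = t^k = \#C\langle I\rangle$, forcing $S = C\cap H = C\langle I\rangle$ as sets. Since all three sets have the same finite cardinality $t^k$ and $S\subseteq C\cap H\subseteq C\langle I\rangle$ with $\#C\langle I\rangle \le t^k$ (from the previous paragraph) — actually equal to $t^k$ — the squeeze is immediate: $t^k = \#S \le \#(C\cap H) \le \#C\langle I\rangle = t^k$, hence equality throughout and $S = C\cap H$.

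It remains to argue that $S = C\cap H$ is a perfect code in $H$ with packing radius $e$. Because $S = C\langle I\rangle$ and $C\langle I\rangle$ is a perfect code in $H_I$ with packing radius $e$ by Proposition \ref{SectionConstrProp}, this is automatic. Alternatively, and perhaps more cleanly for exposition: the minimum distance of $S$ is at least $\mathrm{dist}(C) = 2e+1$ since $S\subseteq C$, so the balls $B(s,e)\cap H$ for $s\in S$ are pairwise disjoint; and $\#S\cdot(2e+1)^k = t^k(2e+1)^k = q^k = \#H$, so by the sphere-packing count these disjoint balls exhaust $H$, i.e. $H = \biguplus_{s\in S}(B(s,e)\cap H)$, which is exactly perfection of $S$ in $H$ with packing radius $e$.

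The main obstacle is the bookkeeping for the identification $C\cap H \subseteq C\langle I\rangle$: one must be careful that $\pi_I$ restricts to the identity on $H_I$ and that every codeword lying in $H_I$ automatically lies in $B(H_I,e)\cap C$, so that it survives into the section. Once that is pinned down, everything else is a cardinality squeeze plus the sphere-packing identity, both of which are routine given the results already developed in Sections 2–4.
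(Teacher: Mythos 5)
Your proposal is correct, but it reaches the key cardinality bound by a different route than the paper. The paper argues directly: the codewords of $C\cap H$ are pairwise at distance at least $2e+1$, so the balls $B(c,e)\cap H$ are disjoint subsets of $H$ of size $(2e+1)^k$ each, whence $\#(C\cap H)\leq q^k/(2e+1)^k=t^k$; the squeeze $t^k=\#S\leq\#(C\cap H)\leq t^k$ then gives $S=C\cap H$, and a short computation with the packing radius $e'$ of $S$ ($e'\geq e$ since $S\subseteq C$, and $(2e'+1)^k\leq q^k/\#S=(2e+1)^k$) yields perfection. You instead obtain the upper bound by embedding $C\cap H\subseteq C\langle I\rangle$ and invoking Proposition \ref{SectionConstrProp} to get $\#C\langle I\rangle=t^k$; this is legitimate (the section construction is proved in Section 4, before this lemma, so there is no circularity, and your verification that $\pi_I$ fixes $H_I$ pointwise and that $C\cap H_I\subseteq B(H_I,e)\cap C$ is exactly the bookkeeping needed). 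What your route buys is the extra conclusion $S=C\langle I\rangle$ for free, which makes perfection automatic; what it costs is reliance on the heavier Proposition \ref{SectionConstrProp} where a two-line counting argument suffices. Your ``alternative'' closing argument (disjointness of the balls $B(s,e)\cap H$ from $\mathrm{dist}(S)\geq 2e+1$ plus the count $\#S\cdot(2e+1)^k=\#H$) is essentially the paper's own second half, so either version of your write-up is acceptable.
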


\begin{proof}
We have that $t^{k}=\#S \leq \# (C \cap H) \leq \frac{q^k}{(2e+1)^k}=t^k$ (the last inequality is consequence of the sphere packing condition) so $S=C\cap H$. Let $e'$ be the packing radius of $S$. Since $C$ has packing radius $e$ we have $e'\geq e$. By the sphere packing condition $(2e'+1)^k \leq \frac{q^k}{\# S}=(2e+1)^k$ hence $e'=e$ and $(2e+1)^k \cdot \# S = q^k$, therefore $S=C\cap H$ is a perfect code in $H$ with packing radius $e$. 
\end{proof}

\begin{proposition}\label{PerfectGMProp}
Every ordered perfect code $C \in LPL^{\infty}(n,e,q)$ has a generator matrix which is a $(e,q)$-perfect matrix.
\end{proposition}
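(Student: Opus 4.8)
The plan is to build the generator matrix row by row, peeling off one coordinate at a time using the ordered structure of $C$ and the section construction. Since $C$ is ordered, $\wp(C)=(n,n-1,\ldots,1)$, which means that at each stage $i$ the code $C$ is of type $n+1-i$ relative to the cartesian section obtained by killing coordinates $n, n-1, \ldots$ already processed. Concretely, set $I_k=\{k+1,k+2,\ldots,n\}$ for $0\le k\le n$ (so $I_n=\emptyset$, $I_0=[n]$). The ordering hypothesis together with Proposition \ref{LinearSectionProp} guarantees that $C\langle I_k\rangle=\pi_{I_k}(C)$ is a linear perfect code in $H_{I_k}$ with packing radius $e$, and moreover that $C$ is of type $k$ after restricting to $H_{I_{k-1}}$, i.e. $(2e+1)e_k\in C\langle I_k\rangle$.

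First I would extract, for each $k$ from $n$ down to $1$, a codeword $M_k\in C$ such that $\pi_{I_k}(M_k)=(2e+1)e_k$; such a codeword exists because $(2e+1)e_k\in C\langle I_k\rangle=\pi_{I_k}(C)$. By construction $M_k$ has $k$-th coordinate equal to $2e+1$ and all coordinates with index $>k$ equal to zero, so the matrix $M$ with rows $M_1,\ldots,M_n$ is upper triangular with diagonal entries $2e+1$; that is, $M\in\nabla_n(2e+1)$, giving conditions (1) and (2) of Proposition \ref{PerfCondProp}. Next I would check that the rows of $\overline{M}$ generate $C$: the subgroup $\langle \overline{M_1},\ldots,\overline{M_n}\rangle$ is contained in $C$, and it has cardinality at least $t^n$ because the triangular shape with diagonal $2e+1$ (hence the image of $x\mapsto x\overline{M}$ on $\Z_t^n$ viewed through the cartesian sections) forces the index argument; more cleanly, apply Lemma \ref{RelPerfLemma} repeatedly, or simply note $\#C=t^n$ and the subgroup already surjects onto each successive section. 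Then $qM^{-1}$ being integral follows because $M$ is a generator matrix in the sense of the paper precisely when $q\Z^n\subseteq\mathrm{span}(M)$, which holds since $\Lambda_C=\pi^{-1}(C)\supseteq q\Z^n$ and $\mathrm{span}(M)=\Lambda_C$.

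The remaining and most substantive point is condition (3): $t\overline{M_i}\in\mathrm{span}(\overline{M_{i+1}},\ldots,\overline{M_n})$ for $1\le i<n$. This is where the perfectness and the ordered structure really enter. The key observation is that $t\,\overline{M_i}$, when projected to the section $H_{I_i}$, becomes $t(2e+1)e_i=\overline{0}$ in $\Z_q^n$; hence $t\overline{M_i}$ lies in the kernel of $\pi_{I_i}$ restricted to $C$, i.e. in $C\cap H_{\{i\}^c\cup I_i}$-type subgroup — more precisely $t\overline{M_i}$ has zero entries in all coordinates $\ge i$, so it lies in $C\cap H_{I_{i-1}}$. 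By Lemma \ref{RelPerfLemma} applied with $H=H_{I_{i-1}}$ (dimension $i-1$), the set $\{\overline{M_1},\ldots,\overline{M_{i-1}}\}$ generates... wait — I instead need that $C\cap H_{I_{i-1}}$ is generated by $\overline{M_{i+1}},\ldots,\overline{M_n}$? No: coordinates $\ge i$ vanish means we are inside $H_{\{i,i+1,\ldots,n\}}=H_{I_{i-1}}$, and the rows $\overline{M_{i+1}},\ldots,\overline{M_n}$ all lie in $H_{I_i}\subseteq H_{I_{i-1}}$ and in fact generate $C\cap H_{I_{i-1}}$ because, by the peeling construction and Lemma \ref{RelPerfLemma}, $C\langle I_{i-1}\rangle$ has the $\overline{M_j}$ with $j\le i-1$ surjecting onto it while $C\cap H_{I_{i-1}}$ has exactly $t^{\,n-i+1}$ elements, matching $\#\langle\overline{M_{i+1}},\ldots,\overline{M_n}\rangle$ once one checks that latter subgroup already has full size $t^{\,n-i+1}$ inside $H_{I_{i-1}}$ (again a triangular-shape cardinality count via successive sections). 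I expect this cardinality bookkeeping — showing $\langle\overline{M_{i+1}},\ldots,\overline{M_n}\rangle = C\cap H_{I_{i-1}}$ — to be the main obstacle, and the cleanest route is an induction on $n-i$ using Proposition \ref{LinearSectionProp} and Lemma \ref{RelPerfLemma} to identify each $C\cap H_{I_{i-1}}$ with a perfect code in $H_{I_{i-1}}$ that is itself ordered, so the already-constructed lower rows serve as its generator matrix. Once condition (3) is in hand, Proposition \ref{PerfCondProp} immediately gives that $M$ is an $(e,q)$-perfect matrix, completing the proof.
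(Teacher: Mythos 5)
Your overall architecture is the same as the paper's: reduce to Proposition \ref{PerfCondProp}, use the ordered structure to produce an upper-triangular generator matrix with diagonal entries $2e+1$, and obtain condition (3) by identifying $C$ intersected with a cartesian subgroup with the span of the lower rows via Lemma \ref{RelPerfLemma}. (The one genuine difference is that you build the rows by lifting $(2e+1)e_k$ out of the sections, whereas the paper starts from the Hermite normal form and then argues that its diagonal entries must equal $2e+1$; your route to conditions (1)--(2) is fine and arguably cleaner.) There are, however, two problems. The first is a persistent index reversal: from $\pi_{I_k}(M_k)=(2e+1)e_k$ with $I_k=\{k+1,\dots,n\}$ you may conclude that the coordinates of $M_k$ of index \emph{less} than $k$ vanish (those of index $>k$ are exactly the ones the projection destroys, so nothing is known about them), and consequently $t\overline{M_i}$ and the rows $\overline{M_{i+1}},\dots,\overline{M_n}$ live in $H_{\{1,\dots,i\}}$ (first $i$ coordinates zero), not in $H_{I_{i-1}}$. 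You half-notice this mid-argument but never resolve it, and as written several of the containments you invoke are false.

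The second problem is the genuine gap: the entire proof of condition (3) rests on the identity $\mbox{span}(\overline{M_{i+1}},\dots,\overline{M_n})=C\cap H_{\{1,\dots,i\}}$, and you explicitly defer this as ``cardinality bookkeeping'' without carrying it out. The route you sketch --- that $C\cap H_{\{1,\dots,i\}}$ is an ordered perfect code in the section for which ``the already-constructed lower rows serve as its generator matrix'' --- presupposes exactly what is to be proved (that those rows generate $C\cap H_{\{1,\dots,i\}}$, equivalently that their span has $t^{n-i}$ elements). The paper closes this step by induction on $i$ using the Linear Construction (Proposition \ref{LinearConstrProp}): once $t\overline{M_j}\in\mbox{span}(\overline{M_{j+1}},\dots,\overline{M_n})$ is known for all $j>i$, repeated application of that construction shows $\mbox{span}(\overline{M_{i+1}},\dots,\overline{M_n})$ is a perfect code of cardinality $t^{n-i}$ in the cartesian subgroup, and only then does Lemma \ref{RelPerfLemma} apply. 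An alternative, non-inductive way to close your gap is an index count: the lower-right $(n-i)\times(n-i)$ block is upper triangular with determinant $(2e+1)^{n-i}$, so its row span together with $q\Z^{n-i}$ has index at most $(2e+1)^{n-i}$ in $\Z^{n-i}$, forcing the span modulo $q$ to have at least $q^{n-i}/(2e+1)^{n-i}=t^{n-i}$ elements, while the sphere-packing bound gives $\#(C\cap H_{\{1,\dots,i\}})\le t^{n-i}$. Either way, this step must actually be argued; as it stands your proposal asserts rather than proves the crux of condition (3).
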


\begin{proof}
Let $C \in LPL^{\infty}(n,e,q)$. By the Hermite normal form theorem we have a generator matrix $M$ for $C$ which is upper triangular. Let $M_1,M_2,\ldots,M_n$ be the rows of $M$ and we denote by $m_i=M_{ii}$ the elements in the principal diagonal. Multiplying by $-1$ if it were necessary we can suppose that each $m_i>0$ ($m_i\neq 0$ because $M$ is non-singular). We will prove the following assertion by induction:
\begin{equation}\label{EqInduction}
\left\{ \begin{array}{l}
 t \overline{M_{n-i}} \in \mbox{span}\left( \overline{M_{n-(i-1)}},\overline{M_{n-(i-2)}},\ldots, \overline{M_{n}} \right) \\ m_{n-(i-1)}=m_{n-(i-2)}=\ldots=m_{n}=2e+1 \end{array} \right.
\end{equation} for $1\leq i < n$, where as usual $\overline{X}=X+q\mathbb{Z}^n \in \mathbb{Z}_q^n$ is the residual class modulo $q$ of $X \in \mathbb{Z}^n$. For $i=1$ we express $m_n=(2e+1)a+r$ with $a$ and $b$ non-negative integer $0\leq r < 2e+1$. Since $C$ is of type $n$ (because $C$ is ordered) we have that $v=(2e+1)e_n \in \Lambda_{C}$ and also $r e_n = M_n - av \in \Lambda_C$. The packing radius of $\Lambda_C$ (which is equal to the packing radius of $C$) is $e$ and consequently its minimum distance is $2e+1$, but $|| re_n||_{\infty}=r$ which imply $r=0$ and $M_n=av$. Substituting $M_n$ by $v$ we have another generator matrix $M'$ for $C$, since $\det(M)=\det(M')=\det(\Lambda_{C})$ and $\det(M)=a\det(M')$ we have $a=1$ and $m_n=2e+1$. Since $C$ is ordered, the code $C\langle n \rangle$ is of type $n-1$ and since $m_{n-1}e_{n-1} \in C\langle n \rangle$, using a similar argument as in the proof of $m_n=2e+1$ we can prove that $m_{n-1}=2e+1$, so $t \overline{M_{n-1}}\in H_{\{1,\ldots,n-1\}}\cap C$. On the other hand, since $M_n= (2e+1)e_n$ we have $\mbox{span}(\overline{M_n})\subseteq C \cap H_{\{1,\ldots,n-1\}}$ and $\#\mbox{span}(\overline{M_n})=t$, thus by Lemma \ref{RelPerfLemma} we have $H_{\{1,\ldots,n-1\}}\cap C=\mbox{span}(\overline{M_n})$ so the assertion (\ref{EqInduction}) is true for $i=1$. Now consider $j$ with $2\leq  j< n$ and let us suppose that the assertion (\ref{EqInduction}) is true for $i$ with $1\leq i<j$. By inductive hypotesis $t \overline{M_{n-i}} \in \mbox{span}\left( \overline{M_{n-(i-1)}},\overline{M_{n-(i-2)}},\ldots, \overline{M_{n}} \right)$ for $1\leq i <j$ so linear construction (Prop. \ref{LinearConstrProp}) guarantees that $C'= \mbox{span}\left( \overline{M_{n-(j-1)}},\overline{M_{n-(j-2)}},\ldots, \overline{M_{n}} \right)$ is a perfect code in $H_{\{1,2,\ldots,n-j\}}$ with packing radius $e$. In particular $\# C'=t^{j}$ and by Lemma \ref{RelPerfLemma} we have that $C' = C \cap H_{1,2,\ldots,n-j}$. Since $C$ is ordered, $C\langle n-(j-1),\ldots,n \rangle$ is of type $n-j$ and using that $m_{n-j}e_{n-j} \in C\langle n-(j-1),\ldots,n \rangle$ and a similar argument used in the proof of $m_n=2e+1$ we can prove that $m_{n-j}=2e+1$, then $t \overline{M_{n-j}}\in H_{\{1,2,\ldots,n-j\}}\cap C =\mbox{span}\left( \overline{M_{n-(j-1)}},\overline{M_{n-(j-2)}},\ldots, \overline{M_{n}} \right)$, so assertion (\ref{EqInduction}) is true for $i=j$. Finally, assertion ($\ref{EqInduction}$) for $1\leq i <n$ and Proposition \ref{PerfCondProp} imply that $M$ is an $(e,q)$-perfect matrix.
\end{proof}

\begin{remark}\label{PerfectHermiteRemark}
In order to obtain a $(e,q)$-perfect generator matrix for an ordered perfect code $C\in LPL^{\infty}(n,e,q)$ from a given generator matrix we can apply the Hermite normal form algorithm and multiply some rows by $-1$, if necessary.
\end{remark}

\begin{definition}
We say that a matrix $M \in \nabla_n(2e+1)$ is reduced if $|M_{ij}|\leq e$ for $1\leq i< j \leq n$.
\end{definition}

\begin{notation}
We denote by $\mathcal{P}_{n}(e,q)=\{M \in \nabla_n(2e+1): M \textrm{ is }(e,q)-\textrm{perfect}\}$. The subset of reduced matrices in $\nabla_{n}(2e+1)$ and $\mathcal{P}_{n}(e,q)$ is denoted by $\nabla_{n}(2e+1)_{\textrm{red}}$ and $\mathcal{P}_{n}(e,q)_{\textrm{red}}$ respectively.
\end{notation}

\begin{proposition}
Let $M,M' \in \mathcal{P}_{n}(e,q)$. If $M_{ij}\equiv M'_{ij} \pmod{2e+1}$ then $\mbox{span}(M)=\mbox{span}(M')$.
\end{proposition}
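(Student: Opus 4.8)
The plan is to show that the two perfect matrices generate the same lattice modulo $q\Z^n$ by exploiting the rigid structure forced by the defining relations $AM = qB$ with $A \in \nabla_n(t)$, $B \in \nabla_n(1)$. The key point is that the hypothesis $M_{ij} \equiv M'_{ij} \pmod{2e+1}$ means $M$ and $M'$ differ by a matrix all of whose entries are divisible by $2e+1$, i.e.\ $M' = M + (2e+1)R$ for some $R \in \nabla_n(\Z)$ with zero diagonal (strictly upper triangular, since both matrices have $2e+1$ on the diagonal). I would first observe that it suffices to prove $\mbox{span}(M') \subseteq \mbox{span}(M)$, since by symmetry the reverse inclusion follows and both spans are subgroups of $\Z_q^n$; actually one can even reduce to showing $\#\mbox{span}(M) = \#\mbox{span}(M') = t^n$ (which holds because a $(e,q)$-perfect matrix is, by Proposition \ref{PerfectGMProp} and the surrounding discussion, a generator matrix of an ordered perfect code, hence $\det(A) = t^n = \#C$) and then one inclusion gives equality.

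Next I would work row by row from the bottom up, which mirrors the inductive structure of Proposition \ref{PerfCondProp} and Proposition \ref{PerfectGMProp}. For the last row, $M_n = M'_n = (2e+1)e_n$, so $\overline{M'_n} = \overline{M_n} \in \mbox{span}(M)$ trivially. Suppose inductively that $\overline{M'_k}, \overline{M'_{k+1}}, \ldots, \overline{M'_n} \in \mbox{span}(M)$ for all $k > i$; I want $\overline{M'_i} \in \mbox{span}(M)$. Write $M'_i = M_i + (2e+1)\rho_i$ where $\rho_i$ is supported on coordinates $i+1, \ldots, n$. The term $\overline{M_i}$ lies in $\mbox{span}(M)$ by definition, so it remains to show $(2e+1)\overline{\rho_i} \in \mbox{span}(M)$. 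Here I would use condition (3) of Proposition \ref{PerfCondProp} applied to $M'$: we have $t\overline{M'_j} \in \mbox{span}(\overline{M'_{j+1}}, \ldots, \overline{M'_n})$ for each $j$, and by the inductive hypothesis these right-hand spans are contained in $\mbox{span}(M)$; combined with the analogous relations for $M$, this should pin down enough divisibility to place $(2e+1)\rho_i$ inside $\mbox{span}(M)$. Concretely, since $\rho_i$ has support in $\{i+1,\dots,n\}$ and $\mbox{span}(\overline{M_{i+1}},\dots,\overline{M_n}) = C \cap H_{\{1,\dots,i\}}$ is a perfect code in that cartesian subgroup (Lemma \ref{RelPerfLemma}), whose elements of sup-norm less than $2e+1$ are only $0$, one checks that $(2e+1)\overline{\rho_i}$ — being a vector supported on $\{i+1,\dots,n\}$ with all coordinates divisible by $2e+1$ — automatically lies in that perfect code because $\mbox{span}(\overline{M_{i+1}},\dots,\overline{M_n})$ contains all of $(2e+1)\Z_q^{n-i}$ embedded in $H_{\{1,\dots,i\}}$ (each $(2e+1)e_k \in C$ for $k>i$, as $C$ is ordered and the relevant section has type $k$).

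The main obstacle I anticipate is precisely justifying that last claim — that the subgroup $\mbox{span}(\overline{M_{i+1}},\dots,\overline{M_n})$ contains every vector of the form $(2e+1)w$ with $w$ supported on $\{i+1,\dots,n\}$. This is where orderedness (or at least the type conditions coming from Minkowski--Haj\'os) must be invoked carefully: one needs $(2e+1)e_k \in C$ for each $k \in \{i+1,\dots,n\}$, and these generate exactly $(2e+1)\Z_q^{n-i}$ inside $H_{\{1,\dots,i\}}$, which has order $t^{n-i}$, matching $\#(C \cap H_{\{1,\dots,i\}})$. Once that is in hand, $(2e+1)\rho_i \in \mbox{span}(M)$ and hence $\overline{M'_i} \in \mbox{span}(M)$, closing the induction; symmetry then gives $\mbox{span}(M) = \mbox{span}(M')$. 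An alternative, possibly cleaner, route avoiding the ordered hypothesis would be to argue purely at the level of the $\Z$-lattices: show that $M'^{-1}M \in GL_n(\Z)$ by verifying both $M'^{-1}M$ and its inverse have integer entries, using $AM = qB$, $A'M' = qB'$ and $M - M' \in (2e+1)\nabla_n(\Z)$ together with $A, A' \in \nabla_n(t)$, $\gcd(t, 2e+1)$ dividing appropriately — but I expect the triangular bookkeeping there to be the delicate part, so I would fall back on the row-by-row argument above as the primary plan.
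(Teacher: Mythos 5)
Your induction breaks exactly at the step you flagged as the main obstacle, and it cannot be repaired. Writing $M'_i=M_i+(2e+1)\rho_i$ with $\rho_i$ supported on $\{i+1,\ldots,n\}$, you need $(2e+1)\overline{\rho_i}\in\mbox{span}(\overline{M})$, and you propose to get it from the claim that $\mbox{span}(\overline{M_{i+1}},\ldots,\overline{M_n})$ contains every vector $(2e+1)w$ with $w$ supported on $\{i+1,\ldots,n\}$, i.e.\ that $(2e+1)e_k$ is a codeword for each $k>i$. Orderedness does not give this: it gives $(2e+1)e_k\in C\langle\cdots\rangle$, and the cartesian section is a \emph{projection} of the code onto a cartesian subgroup (Proposition \ref{LinearSectionProp}), not a subset of the code. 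Concretely, $\mbox{span}\bigl(\overline{(0,3,1)},\overline{(0,0,3)}\bigr)\subseteq\mathbb{Z}_9^3$ is a perfect code of packing radius $1$ in $H_{\{1\}}$, built from the bottom rows of a perfect matrix, but it does not contain $(0,3,0)$ (membership forces $b\equiv 1\pmod 3$ from the second coordinate and $b\equiv 0\pmod 3$ from the third). Both subgroups in your claimed inclusion have cardinality $t^{n-i}$, but they differ in general.

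In fact the statement itself is false for $n\geq 3$, so no argument can close the gap. Take $e=1$, $q=9$, $t=3$ and
$$M=\begin{pmatrix}3&0&0\\0&3&1\\0&0&3\end{pmatrix},\qquad M'=\begin{pmatrix}3&3&0\\0&3&1\\0&0&3\end{pmatrix}.$$
Both lie in $\mathcal{P}_3(1,9)$: with $A\in\nabla_3(3)$ having $A_{23}=2$ and the other off-diagonal entries zero, one gets $AM=9B$ and $AM'=9B'$ with $B,B'\in\nabla_3(1)$. All entries agree modulo $3$, yet $M'_1=(3,3,0)\notin\mbox{span}(M)$, by the same $b\equiv1$ versus $b\equiv0\pmod 3$ contradiction as above; the two codes have distinct reduced generator matrices, with first rows $(3,0,0)$ and $(3,0,-1)$ respectively. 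This also explains why the paper's one-line appeal to uniqueness of the Hermite normal form does not work: clearing the entry in position $(j,i)$ by subtracting multiples of row $i$ perturbs the entries of row $j$ in columns beyond $i$, so entrywise congruence modulo $2e+1$ does not force equal reduced forms. The proposition does hold for $n\leq 2$ (Remark \ref{GeneratorForLPL2DRemark}) and, vacuously, when both matrices are already reduced — which is the only case needed for the injectivity in Theorem \ref{GralParamTheoremP1}. Your proposed fallback of showing $M'^{-1}M\in GL_n(\mathbb{Z})$ fails on the same example.
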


\begin{proof}
We observe that a reduced $(e,q)$-perfect generator matrix for a code $C \in LPL^{\infty}(n,e,q)$ is just a modified version of the Hermite normal form, so $\mbox{span}(M)=\mbox{span}(M')$ is a consequence of the uniqueness of the Hermite normal form.
\end{proof}

\begin{corollary}\label{SurjectionCor}
There is a surjection $\mathcal{P}_{n}(e,q)_{\textrm{red}} \twoheadrightarrow LPL^{\infty}(n,e,q)_{o}$ given by $M \mapsto \mbox{span}(M)/q\mathbb{Z}^{n}$.
\end{corollary}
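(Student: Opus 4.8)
The statement to prove, Corollary~\ref{SurjectionCor}, asserts that the map $M \mapsto \mathrm{span}(M)/q\mathbb{Z}^n$ from $\mathcal{P}_n(e,q)_{\mathrm{red}}$ to $LPL^\infty(n,e,q)_o$ is well defined and surjective. My plan is to separate the two halves of this claim and to draw each from results already established in the excerpt.

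\textbf{Well-definedness.} First I would check that for any $M \in \mathcal{P}_n(e,q)$ the object $\mathrm{span}(M)/q\mathbb{Z}^n$ actually is an element of $LPL^\infty(n,e,q)_o$. The definition of an $(e,q)$-perfect matrix gives matrices $A \in \nabla_n(t)$ and $B \in \nabla_n(1)$ with $AM = qB$; in particular $q M^{-1} = A^{-1}\cdot qB M^{-1}\cdots$ — more cleanly, from $AM = qB$ and $\det(B)=1$ one gets $q\mathbb{Z}^n \subseteq \mathrm{span}(M)$ (since $qB = AM$ has rows in $\mathrm{span}(M)$ and $B$ is unimodular), so $M$ is a genuine generator matrix (the integrality of $qM^{-1}$ follows because $qM^{-1} = B^{-1}A$ times nothing problematic — $B^{-1}$ is integral as $\det B = 1$). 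Then Proposition~\ref{PerfCondProp} translates the three defining conditions of a perfect matrix exactly into: upper triangular, diagonal $2e+1$, and $t\overline{M_i} \in \mathrm{span}(\overline{M_{i+1}},\ldots,\overline{M_n})$. Feeding the last condition into the linear construction (Proposition~\ref{LinearConstrProp}) inductively, starting from the bottom row $M_n = (2e+1)e_n$ and adding one row at a time, shows that each partial span reduces modulo $q$ to a perfect code of the right dimension, and hence the full code $C = \mathrm{span}(M)/q\mathbb{Z}^n$ lies in $LPL^\infty(n,e,q)$ with packing radius $e$. The fact that $C$ is \emph{ordered} comes from the same induction: each step exhibits $(2e+1)e_k \in C\langle k+1,\ldots,n\rangle$, which is precisely the condition $\wp(C) = (n,n-1,\ldots,1)$ in Definition~\ref{PermutationAssocDef}.

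\textbf{Surjectivity.} Given an ordered perfect code $C \in LPL^\infty(n,e,q)_o$, Proposition~\ref{PerfectGMProp} produces a generator matrix $M$ for $C$ that is $(e,q)$-perfect, and Remark~\ref{PerfectHermiteRemark} records that this $M$ is obtained by running the Hermite normal form algorithm and flipping signs of rows as needed. The Hermite normal form normalizes the off-diagonal entry $M_{ij}$ in column $j$ to lie in a fixed set of residues modulo the pivot $M_{jj} = 2e+1$; choosing that residue system to be $\{-e,\ldots,e\}$ gives $|M_{ij}| \le e$, i.e. $M \in \nabla_n(2e+1)_{\mathrm{red}}$, hence $M \in \mathcal{P}_n(e,q)_{\mathrm{red}}$. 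Since $\mathrm{span}(M) = \Lambda_C$ by construction, $\mathrm{span}(M)/q\mathbb{Z}^n = \pi(\Lambda_C) = C$, so $M$ is a preimage of $C$ under the map. This establishes surjectivity.

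\textbf{Main obstacle.} The routine-looking but genuinely load-bearing point is confirming that every matrix in $\mathcal{P}_n(e,q)$ really does satisfy $q\mathbb{Z}^n \subseteq \mathrm{span}(M)$, so that $\mathrm{span}(M)/q\mathbb{Z}^n$ is a legitimate $q$-ary code of the right cardinality $t^n$ rather than merely some sublattice; this is where the unimodularity of $B$ and the explicit relation $AM = qB$ do the work, and it is exactly the hypothesis that distinguishes perfect matrices from arbitrary upper-triangular matrices with diagonal $2e+1$. Everything else is bookkeeping: matching the inductive output of Proposition~\ref{LinearConstrProp} to the "ordered" condition, and matching the Hermite normalization to the "reduced" condition. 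One should also note that the map need not be injective — the corollary only claims surjectivity — so no uniqueness argument is required, only the preceding proposition identifying when two reduced perfect matrices span the same code.
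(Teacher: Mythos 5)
Your proof is correct and, for the substantive surjectivity claim, follows the paper's own route exactly: take the $(e,q)$-perfect generator matrix from Proposition~\ref{PerfectGMProp} (via Hermite normal form) and normalize the off-diagonal entries into $\{-e,\dots,e\}$ by subtracting lower rows. Your additional well-definedness check — that $AM=qB$ with $B$ unimodular forces $q\mathbb{Z}^n\subseteq\mathrm{span}(M)$, and that condition (3) of Proposition~\ref{PerfCondProp} feeds into the linear construction to show the resulting code is perfect and ordered — is not spelled out in the paper's proof (it is implicit in the proof of Proposition~\ref{PerfectGMProp}), and including it is a legitimate improvement rather than a deviation.
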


\begin{proof}
By Proposition \ref{PerfectGMProp} and Remark \ref{PerfectHermiteRemark} we can obtain a $(e,q)$-perfect generator matrix $M$ from the Hermite normal form of any generator matrix with the condition $0\leq M_{ij}<2e+1$ if $i<j$. For $i=2,3,\ldots,n$ and for $1\leq j <i$, if the element $ji$ is greater than $e$ we can substract the $i$-th row to the $j$-th row obtaining a new equivalent matrix whose element $ji$ has absolute value at most $e$. Repeating this process we obtain a reduced $(e,q)$-perfect generator matrix for a given ordered code $C \in LPL^{\infty}(n,e,q)_{o}$.
\end{proof}

\begin{corollary}\label{MaxInequalityCor}
Let $q=(2e+1)t$. We have the following inequality:
\begin{equation}\label{MaximalInequality}
\log_{2e+1}\left(\# LPL^{\infty}(n,e,q)\right) \leq \binom{n}{2}
\end{equation}
\end{corollary}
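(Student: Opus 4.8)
The plan is to bound $\# LPL^{\infty}(n,e,q)$ by transporting every perfect code into the ordered subset $LPL^{\infty}(n,e,q)_{o}$ and then counting ordered codes through their canonical generator matrices. First I would let $S_{n}\subseteq\mathcal{G}$ act on $LPL^{\infty}(n,e,q)$ by permuting coordinates; a coordinate permutation is a linear isometry, so it maps perfect codes to perfect codes and preserves the packing radius $e$. By Proposition \ref{TransCor}, for each $C\in LPL^{\infty}(n,e,q)$ there is $\theta_{C}\in S_{n}$ with $\theta_{C}(C)\in LPL^{\infty}(n,e,q)_{o}$, so every perfect code is a coordinate relabelling of an ordered one; consequently the count of $LPL^{\infty}(n,e,q)$ is governed by that of its ordered representatives, and I would reduce the problem to bounding $\# LPL^{\infty}(n,e,q)_{o}$.

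Then I would count $LPL^{\infty}(n,e,q)_{o}$ via Corollary \ref{SurjectionCor}, which gives a surjection $\mathcal{P}_{n}(e,q)_{\textrm{red}}\twoheadrightarrow LPL^{\infty}(n,e,q)_{o}$ sending $M\mapsto\mbox{span}(M)/q\Z^{n}$. This yields $\# LPL^{\infty}(n,e,q)_{o}\le\#\mathcal{P}_{n}(e,q)_{\textrm{red}}\le\#\nabla_{n}(2e+1)_{\textrm{red}}$. A matrix in $\nabla_{n}(2e+1)_{\textrm{red}}$ is upper triangular with all diagonal entries equal to $2e+1$ and with off-diagonal entries $M_{ij}$ (for $1\le i<j\le n$) ranging freely over the $2e+1$ values $\{-e,\ldots,e\}$; since there are exactly $\binom{n}{2}$ such positions, one gets $\#\nabla_{n}(2e+1)_{\textrm{red}}=(2e+1)^{\binom{n}{2}}$. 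Chaining these inequalities gives $\# LPL^{\infty}(n,e,q)\le(2e+1)^{\binom{n}{2}}$, and applying $\log_{2e+1}$ produces the stated bound.

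The main obstacle is the first step: I must ensure that the reduction to ordered codes does not lose any element from the count, that is, that controlling ordered representatives controls the full set. This is precisely what Proposition \ref{TransCor} secures, since the coordinate permutations used lie in the isometry group $\mathcal{G}$ and identify each code with an ordered code of the same group structure and cardinality $t^{n}$. A secondary point, handled by Proposition \ref{PerfectGMProp} together with Remark \ref{PerfectHermiteRemark}, is that the surjection of Corollary \ref{SurjectionCor} is genuinely onto: from any generator matrix of an ordered code the Hermite normal form algorithm, followed by sign changes and row reductions, returns a reduced $(e,q)$-perfect matrix, and the uniqueness of the Hermite normal form keeps distinct ordered codes in distinct fibers, so the displayed counting inequality is valid.
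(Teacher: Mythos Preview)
Your approach mirrors the paper's: both invoke the surjection of Corollary~\ref{SurjectionCor} and then count $\nabla_n(2e+1)_{\textrm{red}}$. You correctly flag the passage from $LPL^{\infty}(n,e,q)$ to $LPL^{\infty}(n,e,q)_o$ as the crux, but your resolution of it does not work. Proposition~\ref{TransCor} only supplies, for each $C$, \emph{some} permutation $\theta_C$ with $\theta_C(C)$ ordered; it does not make the assignment $C\mapsto\theta_C(C)$ injective. Two distinct codes in the same $S_n$-orbit can land on the same ordered code, so the best this yields is $\#LPL^{\infty}(n,e,q)\le n!\cdot\#LPL^{\infty}(n,e,q)_o$, not the stated bound.

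In fact the gap cannot be closed, because the inequality as written is false. Take $n=2$, $e=1$, $q=9$ (so $2e+1=3$, $d_1=3$). The ordered codes are $LC_9(1,0),LC_9(1,1),LC_9(1,2)$, so $\#LPL^{\infty}(2,1,9)_o=3$. But the transposition $\theta=(1\ 2)$ sends $LC_9(1,1)=\langle(3,1),(0,3)\rangle$ to $\langle(1,3),(3,0)\rangle$, which is linear, perfect, of type~$1$, and not of type~$2$ (one checks $(0,3)\notin\langle(1,3),(3,0)\rangle$). Hence $\#LPL^{\infty}(2,1,9)\ge 4>3=(2e+1)^{\binom{2}{2}}$. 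The paper's own one-line argument writes $\#LPL^{\infty}(n,e,q)\le\#\mathcal P_n(e,q)_{\textrm{red}}$ directly from Corollary~\ref{SurjectionCor} and thus shares the same gap; the intended statement is almost certainly about $LPL^{\infty}(n,e,q)_o$, for which your second paragraph already gives a complete proof, and this reading is consistent with how equality is used in Definition~\ref{MaximalDef} and Theorem~\ref{GralParamTheoremP1}.
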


\begin{proof}
Using Lemma \ref{SurjectionCor} we obtain: $$\# LPL^{\infty}(n,e,q)\leq \# \mathcal{P}_{n}(e,q)_{\mbox{red}} \leq \# \nabla_{n}(2e+1)_{\mbox{red}}=(2e+1)^{\binom{n}{2}}.$$
\end{proof}

\begin{definition}\label{MaximalDef}
If the parameters $(n,e,q)$ verify equality in Corollary \ref{MaxInequalityCor} we say that the parameters $(e,q)$ are $n$-maximals and we call a code with these parameters a maximal code.
\end{definition}

\begin{remark}
If $(e,q)$ is $n$-maximal then $\mathcal{P}_{n}(e,q)_{\mbox{red}} = \nabla_{n}(2e+1)_{\mbox{red}}$.
\end{remark}

\subsection{The $n$-maximal case}

In this subsection we show that there are infinitely many maximal codes in each dimension establishing conditions which guarantee maximality. We extend some results obtained for two-dimensional codes in Section 3, to maximal codes including a parametrization theorem for such codes and for their isometry and isomorphism classes.

\begin{lemma}\label{iMaximalLemma}
If $(e,q)$ is $n$-maximal, then $(e,q)$ is $i$-maximal for all $i, 1\leq i \leq n$.
\end{lemma}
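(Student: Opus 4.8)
The statement to prove is Lemma \ref{iMaximalLemma}: if $(e,q)$ is $n$-maximal then $(e,q)$ is $i$-maximal for every $i$ with $1\le i\le n$. By induction it suffices to prove the case $i=n-1$, i.e. to show that $n$-maximality forces $(n-1)$-maximality. The idea is to use the section construction (Proposition \ref{SectionConstrProp}) together with Proposition \ref{LinearSectionProp} to produce, from an arbitrary ordered perfect code in dimension $n-1$, an ordered perfect code in dimension $n$, and then to count.

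\textbf{Key steps.} First I would recall the counting characterization: by Corollary \ref{MaxInequalityCor} one always has $\log_{2e+1}(\#LPL^{\infty}(m,e,q))\le\binom{m}{2}$, and $(e,q)$ is $m$-maximal exactly when equality holds, equivalently (by the Remark following Definition \ref{MaximalDef}) when every reduced upper-triangular matrix in $\nabla_m(2e+1)_{\mathrm{red}}$ is already $(e,q)$-perfect, i.e. $\mathcal{P}_m(e,q)_{\mathrm{red}}=\nabla_m(2e+1)_{\mathrm{red}}$. So the cleanest route is: assume $\mathcal{P}_n(e,q)_{\mathrm{red}}=\nabla_n(2e+1)_{\mathrm{red}}$ and deduce $\mathcal{P}_{n-1}(e,q)_{\mathrm{red}}=\nabla_{n-1}(2e+1)_{\mathrm{red}}$. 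Given an arbitrary reduced matrix $M'\in\nabla_{n-1}(2e+1)_{\mathrm{red}}$, I would embed it as the lower-right $(n-1)\times(n-1)$ block of the $n\times n$ reduced matrix
$$
M=\begin{pmatrix} 2e+1 & 0 & \cdots & 0 \\ 0 & & & \\ \vdots & & M' & \\ 0 & & & \end{pmatrix}\in\nabla_n(2e+1)_{\mathrm{red}}.
$$
By the hypothesis of $n$-maximality this $M$ is $(e,q)$-perfect, so by Proposition \ref{PerfCondProp} it satisfies the span condition $t\,\overline{M_i}\in\mathrm{span}(\overline{M_{i+1}},\dots,\overline{M_n})$ for $1\le i<n$. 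For $2\le i<n$ the first coordinate of $M_i$ (and of $M_j$, $j>i$) is zero, so these relations descend verbatim to the last $n-1$ coordinates; they are exactly the span conditions that make $M'$ an $(e,q)$-perfect matrix (Proposition \ref{PerfCondProp} applied in dimension $n-1$). Hence $M'\in\mathcal{P}_{n-1}(e,q)_{\mathrm{red}}$, giving the reverse inclusion $\nabla_{n-1}(2e+1)_{\mathrm{red}}\subseteq\mathcal{P}_{n-1}(e,q)_{\mathrm{red}}$; the other inclusion is trivial. Therefore $\#\mathcal{P}_{n-1}(e,q)_{\mathrm{red}}=\#\nabla_{n-1}(2e+1)_{\mathrm{red}}=(2e+1)^{\binom{n-1}{2}}$, and since $\#LPL^{\infty}(n-1,e,q)$ is squeezed between (a reindexing of) this count — via the surjection of Corollary \ref{SurjectionCor} it is at most $(2e+1)^{\binom{n-1}{2}}$, and equality in Corollary \ref{MaxInequalityCor} must hold once we check the surjection is also injective in the maximal situation, or alternatively count directly — we conclude $(n-1)$-maximality. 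Iterating down to $i$ finishes the proof.

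\textbf{Main obstacle.} The delicate point is the last inequality chain: Corollary \ref{SurjectionCor} only gives a \emph{surjection} $\mathcal{P}_n(e,q)_{\mathrm{red}}\twoheadrightarrow LPL^{\infty}(n,e,q)_o$, so a priori $\#LPL^{\infty}(n-1,e,q)_o$ could be strictly smaller than $(2e+1)^{\binom{n-1}{2}}$ even though all reduced matrices are perfect. I expect this is handled by showing that in dimension $m$ a reduced $(e,q)$-perfect matrix is unique for the code it generates — essentially the uniqueness of the (modified) Hermite normal form invoked in the Proposition preceding Corollary \ref{SurjectionCor} — so that the surjection is in fact a bijection on reduced matrices and the counting is tight; then $n$-maximality of $(e,q)$ propagates to all smaller dimensions. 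An alternative, perhaps cleaner, formulation of the whole argument is to work directly with the equivalent condition ``$\mathcal{P}_m(e,q)_{\mathrm{red}}=\nabla_m(2e+1)_{\mathrm{red}}$'' from the Remark after Definition \ref{MaximalDef} as the \emph{definition} of $m$-maximality for the purposes of this proof, in which case the embedding argument above is the entire content and no recounting of $LPL^{\infty}$ is needed; I would present it that way to avoid the Hermite-uniqueness detour.
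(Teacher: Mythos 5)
Your proof is correct and essentially identical to the paper's: the paper embeds an arbitrary $M'\in\nabla_{i}(2e+1)$ as the lower-right block of $M=\left(\begin{smallmatrix}(2e+1)I_{n-i}&0\\0&M'\end{smallmatrix}\right)$, invokes $n$-maximality to obtain $A\in\nabla_{n}(t)$ and $B\in\nabla_{n}(1)$ with $AM=qB$, and restricts to the corresponding $i\times i$ blocks of $A$ and $B$ — which is your block-embedding argument carried out for all $i$ at once rather than one step at a time, phrased via the $AM=qB$ definition instead of the span conditions of Proposition \ref{PerfCondProp}. The ``delicate point'' you flag is resolved exactly as in your alternative formulation: the paper works throughout with the characterization $\mathcal{P}_{m}(e,q)=\nabla_{m}(2e+1)$ of $m$-maximality (cf.\ Lemma \ref{MaximalEquivLemma}, (i)$\Leftrightarrow$(ii)), so no recounting of $LPL^{\infty}$ is needed.
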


\begin{proof}
Let $(e,q)$ be an $n$-maximal pair and $M'\in \nabla_{i}(2e+1)$ with $1\leq i <n$. We consider the matrix $M= \left( \begin{matrix}
(2e+1)I_{n-i} & 0 \\ 0 & M'
\end{matrix}  \right) \in \nabla_{n}(2e+1)$, since $(e,q)$ is $n$-maximal there exists $A \in \nabla_{n}(t), B \in \nabla_{n}(1)$ such that $AM=qB$. If we denote by $A'$ and $B'$ the submatrices consisting of the last $i$ rows and the last $i$ columns of $A$ and $B$ respectively. Clearly, $A' \in \nabla_{i}(t)$ and $B' \in \nabla_i(1)$ and $A'M'=qB'$, therefore $M'$ is $(e,q)$-perfect, so the pair $(e,q)$ is $i$-maximal.
\end{proof}

\begin{lemma}\label{MaximalEquivLemma}
Let $q=(2e+1)t$ and $\overline{X}=X+q\mathbb{Z}^{n}\in \mathbb{Z}_{q}^n$ be the residual class of $X\in \mathbb{Z}^n$ modulo $q$. The following assertions are equivalent:
\begin{itemize}
\item[(i)] $(e,q)$ is $n$-maximal.
\item[(ii)] For all $M \in \nabla_{n}(2e+1)$, there exists $A \in \nabla_{n}(t), B \in \nabla_{n}(1)$ such that $AM=qB$.
\item[(iii)] $t \mathbb{Z}_q^{i} \subseteq \mbox{span}(\overline{M})$ for all $M \in \nabla_{i}(2e+1)$ and for all $i$, $1\leq i <n$.
\end{itemize}
\end{lemma}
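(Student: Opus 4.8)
The plan is to prove the chain of equivalences (i) $\Leftrightarrow$ (ii) $\Leftrightarrow$ (iii) by unwinding the definitions of $n$-maximality and of $(e,q)$-perfect matrices, using the characterization in Proposition \ref{PerfCondProp}. The implication (ii) $\Rightarrow$ (i) is immediate: condition (ii) says that \emph{every} $M\in\nabla_n(2e+1)$ is $(e,q)$-perfect, so $\mathcal{P}_n(e,q)_{\mathrm{red}}=\nabla_n(2e+1)_{\mathrm{red}}$, and then $\#\mathcal{P}_n(e,q)_{\mathrm{red}}=(2e+1)^{\binom n2}$; combining this with the surjection of Corollary \ref{SurjectionCor} and the inequality of Corollary \ref{MaxInequalityCor} forces equality, i.e. $(e,q)$ is $n$-maximal. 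For (i) $\Rightarrow$ (ii): by Corollary \ref{SurjectionCor} the cardinality $\#LPL^\infty(n,e,q)_o$ is at most $\#\mathcal{P}_n(e,q)_{\mathrm{red}}\le(2e+1)^{\binom n2}$, and $n$-maximality says the left end equals the right end, so $\#\mathcal{P}_n(e,q)_{\mathrm{red}}=(2e+1)^{\binom n2}=\#\nabla_n(2e+1)_{\mathrm{red}}$; since $\mathcal{P}_n(e,q)_{\mathrm{red}}\subseteq\nabla_n(2e+1)_{\mathrm{red}}$ and both are finite of the same size, they coincide. Finally, any $M\in\nabla_n(2e+1)$ reduces modulo $2e+1$ in its strictly-upper entries to a matrix $M_{\mathrm{red}}\in\nabla_n(2e+1)_{\mathrm{red}}=\mathcal{P}_n(e,q)_{\mathrm{red}}$ obtained by elementary row operations (subtracting multiples of lower rows), hence $\mathrm{span}(M)=\mathrm{span}(M_{\mathrm{red}})$; since being $(e,q)$-perfect depends only on $\mathrm{span}(\overline{M})$ and the diagonal (by the span-based conditions (1)--(3) of Proposition \ref{PerfCondProp}, which are invariant under such operations), $M$ is $(e,q)$-perfect, giving (ii).

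For the equivalence (ii) $\Leftrightarrow$ (iii), I would use Proposition \ref{PerfCondProp}, which says $AM=qB$ is solvable with $A\in\nabla_n(t)$, $B\in\nabla_n(1)$ precisely when $t\overline{M_i}\in\mathrm{span}(\overline{M_{i+1}},\ldots,\overline{M_n})$ for all $1\le i<n$. So (ii) is equivalent to: for every $M\in\nabla_n(2e+1)$ and every $i<n$, $t\overline{M_i}$ lies in the span of the lower rows. The lower rows $\overline{M_{i+1}},\ldots,\overline{M_n}$ form (after deleting the first $i$ coordinates, which are forced to be zero on those rows by upper-triangularity) an arbitrary matrix in $\nabla_{n-i}(2e+1)$, and $\overline{M_i}$ restricted to those last $n-i$ coordinates is an arbitrary vector of $\mathbb{Z}_q^{n-i}$ whose first of these coordinates is $2e+1$; multiplying by $t$ and noting $t(2e+1)=q\equiv 0$ and that the span of the lower rows sits inside the cartesian subgroup on the last $n-i$ coordinates, the condition becomes $t\cdot v\in\mathrm{span}(\overline{N})$ for every $v\in\mathbb{Z}_q^{n-i}$ and every $N\in\nabla_{n-i}(2e+1)$. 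Reindexing $j=n-i$ (so $j$ ranges over $1\le j<n$, after also handling the degenerate pieces) turns this into exactly (iii): $t\mathbb{Z}_q^j\subseteq\mathrm{span}(\overline{N})$ for all $N\in\nabla_j(2e+1)$ and all $1\le j<n$. The reverse direction is the same computation read backwards: given (iii), for any $M\in\nabla_n(2e+1)$ each required membership $t\overline{M_i}\in\mathrm{span}(\overline{M_{i+1}},\ldots,\overline{M_n})$ follows by applying (iii) with $j=n-i$ to the lower-right block, noting $t\overline{M_i}$ is supported on the last $n-i$ coordinates since its first $i$ entries are multiples of $2e+1$ killed after multiplication by $t$ — wait, rather, one writes $t\overline{M_i}=t(\text{first $i$ entries},\,w)$ where the first $i$ entries are in $(2e+1)\mathbb{Z}$ hence $t\cdot$them $\in q\mathbb{Z}=0$, so $t\overline{M_i}=(0,\ldots,0,tw)\in t\mathbb{Z}_q^{n-i}\subseteq\mathrm{span}$ of the lower block.

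The main obstacle I anticipate is the bookkeeping in the (ii) $\Leftrightarrow$ (iii) step: one must be careful that the ``arbitrary matrix in $\nabla_{n-i}(2e+1)$'' and ``arbitrary vector in $\mathbb{Z}_q^{n-i}$'' claims are genuinely free, i.e. that choosing the lower rows and the $i$-th row of $M$ freely in $\nabla_n(2e+1)$ really does realize all of $\nabla_{n-i}(2e+1)$ and all relevant vectors, and that the upper-triangular shape of $A$ and $B$ in Proposition \ref{PerfCondProp} matches the nesting in (iii) across \emph{all} indices $i$ simultaneously rather than one at a time. I expect the cleanest route is to cite Proposition \ref{PerfCondProp} to replace (ii) by its row-span reformulation and then argue the span conditions index by index, since Proposition \ref{PerfCondProp} already packages the simultaneity. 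I would also double-check the edge cases $j=1$ (where $\nabla_1(2e+1)=\{(2e+1)\}$ and the condition $t\mathbb{Z}_q\subseteq\mathrm{span}(\overline{2e+1})=\langle\overline{2e+1}\rangle$ holds automatically since $t\overline x = \overline{tx}$ and $tx$ is a multiple of $q$... actually $t\mathbb{Z}_q = \{0,t,2t,\ldots\}$ and $\langle 2e+1\rangle$ in $\mathbb{Z}_q$ has order $t$, so they coincide) and the vacuous range when $n=1$.
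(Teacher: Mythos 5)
Your proof follows essentially the same route as the paper: (i) $\Leftrightarrow$ (ii) via the identification $\mathcal{P}_n(e,q)=\nabla_n(2e+1)$ forced by the counting in Corollary \ref{MaxInequalityCor}, and (ii) $\Leftrightarrow$ (iii) by translating condition (3) of Proposition \ref{PerfCondProp} block by block (the paper handles (ii) $\Rightarrow$ (iii) by first reducing to the single case $i=n-1$ via Lemma \ref{iMaximalLemma} and then using the same bordered-matrix construction you describe; your version does all indices directly, which is fine). The one genuine error is the parenthetical claim at the end that the case $j=1$ of (iii) ``holds automatically'': it does not. For $j=1$ condition (iii) reads $t\mathbb{Z}_q\subseteq(2e+1)\mathbb{Z}_q$; the subgroup $t\mathbb{Z}_q$ has order $2e+1$ while $(2e+1)\mathbb{Z}_q$ has order $t$, so the inclusion holds if and only if $2e+1\mid t$ --- which is exactly the $2$-maximality condition in the theorem that follows the lemma, so it cannot be automatic. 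Concretely, for $q=15$, $2e+1=3$, $t=5$ one has $5\mathbb{Z}_{15}=\{0,5,10\}\not\subseteq\{0,3,6,9,12\}=3\mathbb{Z}_{15}$. Fortunately your main argument for (ii) $\Rightarrow$ (iii) already derives the $j=1$ case honestly, from condition (3) applied to row $n-1$ of a suitably chosen matrix, so the error is confined to the aside; just delete the claim that this case is automatic.
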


\begin{proof}
We have (ii) $\Leftrightarrow \nabla_{n}(2e+1)=\mathcal{P}_{n}(e,q) \Leftrightarrow (i)$. Note that if (iii) holds then condition (3) in Proposition \ref{PerfCondProp} is always satisfied, thus (iii) $\Rightarrow$ (ii). In order to prove (ii) $\Rightarrow$ (iii), by Lemma \ref{iMaximalLemma} it suffices to prove (ii) $\Rightarrow t \mathbb{Z}_{q}^{n-1} \subseteq \mbox{span}(\overline{M})$ for all $M \in \nabla_{n-1}(2e+1)$. Let $M' \in \nabla_{n-1}(2e+1)$ and $w \in \mathbb{Z}^{n-1}$, we want to prove that $t \overline{w} \in \mbox{span}(\overline{M})$. Consider the matrix $M= \left( \begin{matrix}
2e+1 & w \\ 0^{t} & M' \end{matrix}  \right) \in \nabla_{n}(2e+1)$. By (ii) there exist matrices $A\in \nabla_{n}(t), B \in \nabla_n(1)$ such that $AM=B$. Expressing $A= \left(\begin{matrix}
t & v \\ 0^t & A' \end{matrix} \right)$ and $B= \left(\begin{matrix}
1 & u \\ 0^t & B' \end{matrix} \right)$ with $A' \in \nabla_{n-1}(t)$ and $B' \in \nabla_{n-1}(1)$, from the equality $AM=qB$ we obtain $tw+vM'=qu$, thus $t\overline{w}=-v \overline{M'}\in \mbox{span}(\overline{M})$.
\end{proof}

\begin{lemma}
If $(2e+1)^n \mid q$, then $(2e+1)^{n-1} \Zq^{n-1}\subseteq \mbox{span}(\overline{M})$ for all $M \in \nabla_{n-1}(2e+1)$.
\end{lemma}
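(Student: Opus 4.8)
The statement to prove is: if $(2e+1)^n \mid q$, then $(2e+1)^{n-1}\Z_q^{n-1} \subseteq \mathrm{span}(\overline{M})$ for every $M \in \nabla_{n-1}(2e+1)$. Write $r = 2e+1$ for brevity. Since $M$ is upper triangular with all diagonal entries equal to $r$, the plan is to show that $r^{n-1}e_j \in \mathrm{span}(\overline{M})$ for each standard basis vector $e_j$, $1 \le j \le n-1$; the claim then follows because the $r^{n-1}e_j$ generate $r^{n-1}\Z_q^{n-1}$.

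\textbf{Key steps.} First I would work with the integer matrix $M$ rather than its reduction mod $q$, and recall that $\det(M) = r^{n-1}$, so $M^{-1} = \frac{1}{r^{n-1}}\,\mathrm{adj}(M)$ where $\mathrm{adj}(M)$ is the (integer) adjugate matrix. Then $r^{n-1} I_{n-1} = \mathrm{adj}(M)\, M$, which says precisely that each row $r^{n-1}e_j$ is an \emph{integer} linear combination of the rows of $M$, namely the $j$-th row of $\mathrm{adj}(M)$ applied to $M$. Reducing this identity modulo $q$ gives $r^{n-1}\overline{e_j} \in \mathrm{span}(\overline{M})$ for each $j$. Finally, since $(2e+1)^n \mid q$ we in particular have $r^{n-1} \mid q$, so the elements $\overline{r^{n-1}e_j}$ are exactly a generating set for the subgroup $r^{n-1}\Z_q^{n-1}$ of $\Z_q^{n-1}$; hence $r^{n-1}\Z_q^{n-1} = \mathrm{span}(\overline{r^{n-1}e_1},\ldots,\overline{r^{n-1}e_{n-1}}) \subseteq \mathrm{span}(\overline{M})$, as desired. (Alternatively, one can argue by induction on $n$ using the block structure of an upper-triangular matrix, clearing the last coordinate first and then applying the inductive hypothesis to the leading $(n-2)\times(n-2)$ block, but the adjugate argument is cleaner and avoids the bookkeeping.)

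\textbf{Main obstacle.} There is essentially no deep obstacle here; the only thing to be careful about is the role of the hypothesis. One might worry that $r^{n-1} \mid q$ alone suffices and the full strength $r^n \mid q$ is not needed for \emph{this} lemma — indeed the argument above only uses $r^{n-1} \mid q$. The hypothesis $(2e+1)^n \mid q$ is presumably stated in this stronger form because this lemma feeds into condition (iii) of Lemma \ref{MaximalEquivLemma} at the next stage (passing from dimension $n-1$ to dimension $n$, one needs $t = q/r$ to be a multiple of $r^{n-1}$, i.e.\ $r^n \mid q$, so that $r^{n-1}\Z_q^{n-1} \supseteq t\Z_q^{n-1}$ and therefore $t\Z_q^{n-1} \subseteq \mathrm{span}(\overline{M})$). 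So the genuine content is just the adjugate identity $\mathrm{adj}(M)M = \det(M) I = r^{n-1} I$ together with integrality of $\mathrm{adj}(M)$; the rest is reading off divisibility.
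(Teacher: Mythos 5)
Your proof is correct, but it takes a genuinely different route from the paper's. The paper proves the lemma by induction on $n$, using the block decomposition $M=\left(\begin{smallmatrix}2e+1 & w\\ 0^{t} & M'\end{smallmatrix}\right)$: the inductive hypothesis handles the hyperplane $H_{\{1\}}$, and then a short computation shows $(2e+1)^{n-1}\overline{e_1}-(2e+1)^{n-2}\overline{(2e+1,w)}$ lands back in that hyperplane — exactly the "bookkeeping" variant you mention parenthetically and set aside. Your adjugate argument, $\mathrm{adj}(M)\,M=\det(M)\,I=(2e+1)^{n-1}I$ with $\mathrm{adj}(M)$ integral, is shorter and in fact more general: it establishes $(2e+1)^{n-1}\overline{e_j}\in\mathrm{span}(\overline{M})$ for \emph{every} $q$, so the divisibility hypothesis $(2e+1)^n\mid q$ is not used at all in your proof (indeed not even $(2e+1)^{n-1}\mid q$ is needed for the generation step, since $(2e+1)^{n-1}\Z_q^{n-1}$ is by definition the $\Z$-span of the vectors $(2e+1)^{n-1}\overline{e_j}$). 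Your diagnosis of why the hypothesis is nonetheless stated — it is what the subsequent corollary and the maximality criterion actually consume when comparing $t\Z_q^{i}$ with $(2e+1)^{i}\Z_q^{i}$ — matches how the lemma is used in the paper. What the paper's inductive proof buys is uniformity with the surrounding arguments (the same block decomposition reappears in Lemma \ref{MaximalEquivLemma}); what yours buys is brevity and the observation that the statement is unconditional.
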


\begin{proof}
For $n=1$ the assertion is true because $(0)\subseteq \mbox{span}(\overline{M})$ and for $n=2$ the assertion is true because $(2e+1)\mathbb{Z}_q \subseteq \mbox{span}(2e+1)=(2e+1)\mathbb{Z}_q$. Let us suppose that the assertion is true for $n-1$ where $n\geq 3$ and $(2e+1)^n \mid q$. Let $M \in \nabla_{n-1}(2e+1)$ written as $M=\left( \begin{matrix} 2e+1 & w \\ 0^t & M' \end{matrix} \right)$ with $M' \in \nabla_{n-2}(2e+1)$ and $w \in \mathbb{Z}^{n-2}$. Since $(2e+1)^{n-2}\mid q$ we have $(2e+1)^{n-2}\mathbb{Z}_q^{n-2} \subseteq \mbox{span}(\overline{M})$, then  $(2e+1)^{n-2}H_{\{1\}}\subseteq \mbox{span}\overline{(0^t, M')}$ and we obtain the following chain of inequalities: $$(2e+1)^{n-1}H_{\{1\}}\subseteq (2e+1)^{n-2}H_{\{1\}}\subseteq \mbox{span}\overline{(0^t, M')} \subseteq \mbox{span}(\overline{M}).$$ To conclude the proof we need to show that $(2e+1)^{n-1}\overline{e_1} \in \mbox{span}(\overline{M})$. We have that $ (2e+1)^{n-1}\overline{e_1} - (2e+1)^{n-2} (2e+1,w)=(0,-(2e+1)^{n-2}w) \in (2e+1)^{n-2}H_{\{1\}} \subseteq \mbox{span}(\overline{M}), $ so $(2e+1)^{n-1}\overline{e_1} \in \mbox{span}(\overline{M})$. In conclusion, we have that $$(2e+1)^{n-1}\mathbb{Z}_q^{n-1}=(2e+1)^{n-1}\mathbb{Z}\overline{e_1}\oplus (2e+1)^{n-1}H_{\{1\}}\subseteq \mbox{span}(\overline{M}).$$
\end{proof}

\begin{corollary}\label{PerfectCor}
If $(2e+1)^n \mid q$ then $(2e+1)^{i} \Zq^{i}\subseteq \mbox{span}(\overline{M})$ for all $M \in \nabla_{i}(2e+1)$ and for all $i$, $1\leq i <n$.
\end{corollary}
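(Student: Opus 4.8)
The plan is to derive this corollary as an immediate consequence of the lemma that precedes it, simply by re-indexing. That lemma asserts that whenever $(2e+1)^m \mid q$ we have $(2e+1)^{m-1}\Zq^{m-1}\subseteq \mbox{span}(\overline{M})$ for every $M\in\nabla_{m-1}(2e+1)$. So the first step is to fix $i$ with $1\le i<n$ and observe that, since $i+1\le n$ and $(2e+1)^n\mid q$, the chain $(2e+1)^{i+1}\mid (2e+1)^n\mid q$ holds. In other words, the hypothesis of the lemma is satisfied with $m=i+1$.

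The second step is then just to invoke the lemma with that choice of $m$: for every $M\in\nabla_{(i+1)-1}(2e+1)=\nabla_{i}(2e+1)$ one gets $(2e+1)^{(i+1)-1}\Zq^{(i+1)-1}\subseteq \mbox{span}(\overline{M})$, which is exactly $(2e+1)^{i}\Zq^{i}\subseteq \mbox{span}(\overline{M})$. Since $i$ was an arbitrary index in the range $1\le i<n$, this gives the statement for all such $i$ at once.

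I do not expect any genuine obstacle here: this is a bookkeeping corollary whose only content is the trivial divisibility observation $(2e+1)^{i+1}\mid q$. The only thing to be slightly careful about is the boundary cases ($i=1$, where $\nabla_1(2e+1)$ is the singleton $(2e+1)$ and the claim reads $(2e+1)\Zq\subseteq(2e+1)\Zq$), but these are already covered by the base cases in the proof of the preceding lemma, so nothing extra is needed.
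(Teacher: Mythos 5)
Your proposal is correct and is exactly the intended derivation: the paper states this corollary without proof precisely because it follows from the preceding lemma applied with $n$ replaced by $i+1$, using $(2e+1)^{i+1}\mid(2e+1)^n\mid q$. Nothing further is needed.
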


\begin{theorem}
Let $q=(2e+1)t$. The pair $(e,q)$ is $n$-maximal if and only if $(2e+1)^{n-1} \mid t$.
\end{theorem}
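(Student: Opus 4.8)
The plan is to derive both implications from the characterization of $n$-maximality in Lemma~\ref{MaximalEquivLemma}, namely that $(e,q)$ is $n$-maximal exactly when $t\mathbb{Z}_q^{i}\subseteq\mbox{span}(\overline M)$ for every $i$ with $1\le i<n$ and every $M\in\nabla_{i}(2e+1)$. The case $n=1$ is vacuous (and $(2e+1)^{0}\mid t$), so I would assume $n\ge2$. For the implication $(2e+1)^{n-1}\mid t\Rightarrow n\text{-maximal}$ the work is light: from $(2e+1)^{n-1}\mid t$ we get $(2e+1)^{n}\mid(2e+1)t=q$, so Corollary~\ref{PerfectCor} gives $(2e+1)^{i}\mathbb{Z}_q^{i}\subseteq\mbox{span}(\overline M)$ for all $M\in\nabla_{i}(2e+1)$ and all $i$ with $1\le i<n$; since $i\le n-1$ forces $(2e+1)^{i}\mid(2e+1)^{n-1}\mid t$, this yields $t\mathbb{Z}_q^{i}\subseteq(2e+1)^{i}\mathbb{Z}_q^{i}\subseteq\mbox{span}(\overline M)$, and Lemma~\ref{MaximalEquivLemma} concludes.

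The substantive direction is $n\text{-maximal}\Rightarrow(2e+1)^{n-1}\mid t$, and the idea is to feed the criterion of Lemma~\ref{MaximalEquivLemma} (with $i=n-1$) a single extremal matrix. Writing $c=2e+1$, I would take $M_{0}\in\nabla_{n-1}(c)$ with all diagonal entries $c$, with $(M_{0})_{j,j+1}=1$ for $1\le j\le n-2$, and all other entries $0$, so its rows are $R_{j}=ce_{j}+e_{j+1}$ ($j\le n-2$) and $R_{n-1}=ce_{n-1}$. Maximality forces $t\overline{e_{1}}\in\mbox{span}(\overline{M_{0}})$, i.e. $te_{1}=\sum_{i=1}^{n-1}a_{i}R_{i}+qm$ for some integers $a_{i}$ and some $m\in\mathbb{Z}^{n-1}$. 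I would then apply the integral linear form $\phi(x)=\sum_{j=1}^{n-1}(-c)^{j-1}x_{j}$ to this identity: a one-line computation gives $\phi(R_{j})=c(-c)^{j-1}+(-c)^{j}=0$ for $j\le n-2$ and $\phi(R_{n-1})=(-1)^{n-2}c^{n-1}$, so $\phi$ sends $\mbox{span}(M_{0})$ into $c^{n-1}\mathbb{Z}$ and $q\mathbb{Z}^{n-1}$ into $q\mathbb{Z}$; since $\phi(te_{1})=t$, we obtain $t\in c^{n-1}\mathbb{Z}+q\mathbb{Z}=\gcd(c^{n-1},q)\mathbb{Z}$, that is, $\gcd(c^{n-1},ct)\mid t$.

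The last step is the purely arithmetic implication $\gcd(c^{n-1},ct)\mid t\Rightarrow c^{n-1}\mid t$, which I would check prime by prime: for a prime $p$ set $a=v_{p}(c)$ and $b=v_{p}(t)$, so the hypothesis reads $\min\{(n-1)a,\,a+b\}\le b$; if $p\mid c$ then $a\ge1$, and the alternative $\min=a+b$ would give $a\le0$, so necessarily $\min=(n-1)a\le b$, i.e. $v_{p}(c^{n-1})\le v_{p}(t)$, which is automatic when $p\nmid c$; hence $c^{n-1}\mid t$. I expect the main obstacle to be the choice of $M_{0}$: it should be the matrix in $\nabla_{n-1}(c)$ for which $\mbox{span}(\overline M)$ is smallest, so that $\phi$ is a genuine linear obstruction rather than an empty one; once $M_{0}$ is pinned down, both the evaluation of $\phi$ on the rows and the valuation bookkeeping are routine. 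If a cleaner write-up is preferred, one can instead identify $\mbox{span}(M_{0})=\ker\phi$ directly from $|\det(M_{0})|=c^{n-1}=[\mathbb{Z}^{n-1}:\ker\phi]$ and argue inside the cyclic group $\mathbb{Z}^{n-1}/\mbox{span}(M_{0})\cong\mathbb{Z}_{c^{n-1}}$, but only the inclusion $\mbox{span}(M_{0})\subseteq\ker\phi$ is actually used above.
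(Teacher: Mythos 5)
Your argument is correct and is essentially the paper's: the forward direction is the same application of Corollary \ref{PerfectCor} and Lemma \ref{MaximalEquivLemma}, and for the converse the paper tests the same bidiagonal matrix (in the form of criterion (ii) in dimension $n$ rather than criterion (iii) in dimension $n-1$) and its telescoping alternating sum $\sum_i(-2e-1)^{i-1}(a_{1i}+(2e+1)a_{1,i+1})$ is exactly your form $\phi$. The only cosmetic difference is the final divisibility step, where the paper writes $t(1-(2e+1)h)=\pm(2e+1)^{n-1}a_{1n}$ and uses $\gcd(1-(2e+1)h,2e+1)=1$ instead of your prime-by-prime valuation check.
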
  

\begin{proof}
First, we suppose that $(2e+1)^{n-1}\mid t$ (or equivalently $(2e+1)^{n}\mid q$). By Corollary \ref{PerfectCor}, for all $M\in \nabla(2e+1)$ and $1\leq i <n$ we have:
$$t \mathbb{Z}_q^{i}\subseteq (2e+1)^{n-1}\mathbb{Z}_q^{i}\subseteq (2e+1)^i \mathbb{Z}_{q}^i \subseteq \mbox{span}(\overline{M}),$$ and by Lemma \ref{MaximalEquivLemma} the pair $(e,q)$ is $n$-maximal. Now we suppose that $(e,q)$ is $n$-maximal and consider the bidiagonal matrix $M \in \nabla_{n}(2e+1)$ which has $1$ in the secondary diagonal (i.e. in the diagonal above the principal diagonal). Since $(e,q)$ is $n$-maximal, there exists $A \in \nabla_{n}(t), B \in \nabla_{n}(1)$ such that $AM=qB$. If we denote the first row of $A$ by $A_1=(a_{11},a_{12},\ldots, a_{1n})$ and the first row of $B$ by $B_1$, we have that $qB_1=(q,a_{11}+(2e+1)a_{12},a_{12}+(2e+1)a_{13},\ldots, a_{1,n-1}+(2e+1)a_{1n})$ using $a_{11}=t$ we deduce that: $$t+(-1)^{n}(2e+1)^{n-1}a_{1n} = \sum_{i=1}^{n-1} (-2e-1)^{i-1}(a_{1i}+(2e+1)a_{1,i+1})\equiv 0 \pmod{q}.$$ If $h\in \mathbb{Z}$ is such that $t+(-1)^n (2e+1)^{n-1}a_{1n}=qh$ we have $t(1-(2e+1)h)=(-1)^{n+1}(2e+1)^{n-1}a_{1n}$, since $\gcd(1-(2e+1)h,2e+1)=1$ we have $(2e+1)^{n-1}\mid t$.
\end{proof}

Since $\nabla_{n}(2e+1)=\mathcal{P}_n(e,q)$ holds for maximal codes, in this case we obtain the following parametrization for ordered codes which generalize the first part of Theorem \ref{parametrizationTheorem}.

\begin{theorem}\label{GralParamTheoremP1}
Let $(e,q)$ be an $n$-maximal pair. There is a parametrization $$\psi: \nabla_{n}(2e+1)_{\mbox{red}} \rightarrow LPL^{\infty}(n,e,q)_{o}$$ given by $\psi(M)=\mbox{span}(M)/q\mathbb{Z}^n$.
\end{theorem}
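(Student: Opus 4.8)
The plan is to show that $\psi$ is well-defined, surjective, and injective. Since $(e,q)$ is $n$-maximal, the remark following Definition~\ref{MaximalDef} gives $\nabla_n(2e+1)_{\mathrm{red}}=\mathcal{P}_n(e,q)_{\mathrm{red}}$, so every $M\in\nabla_n(2e+1)_{\mathrm{red}}$ is an $(e,q)$-perfect matrix. I would first verify that for such an $M$ the set $\mathrm{span}(M)/q\mathbb{Z}^n$ is indeed an ordered perfect code in $LPL^{\infty}(n,e,q)_o$; this is essentially the converse direction of the perfect-matrix characterization. Concretely, from $AM=qB$ with $A\in\nabla_n(t)$, $B\in\nabla_n(1)$ one reads off that $qM^{-1}=A^{-1}\cdot(qB)M^{-1}\cdot\ldots$ — more cleanly, $AM=qB$ shows $q\mathbb{Z}^n\subseteq\mathrm{span}(M)$ (since $qB$ has integer rows and $\det B=1$, the rows of $qB$ span $q\mathbb{Z}^n$, hence $q\mathbb{Z}^n=\mathrm{span}(qB)\subseteq\mathrm{span}(M)$), so $M$ is a genuine generator matrix of the code $C=\mathrm{span}(M)/q\mathbb{Z}^n$, with $\#C=q^n/\det(M)=t^n$. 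That condition~(3) of Proposition~\ref{PerfCondProp} holds lets one iterate the linear construction (Proposition~\ref{LinearConstrProp}), exactly as in the proof of Proposition~\ref{PerfectGMProp} read in reverse: the bottom $k$ rows of $\overline{M}$ span a perfect code in the cartesian subgroup $H_{\{1,\ldots,n-k\}}$, and for $k=n$ this gives that $C$ is perfect with packing radius $e$. Uppertriangularity with diagonal $2e+1$ together with $(2e+1)e_n\in C$ makes $C$ of type $n$; applying Proposition~\ref{LinearSectionProp} to the sections shows each $C\langle n,n-1,\ldots,n+1-i\rangle$ is of type $n-i$, so $\wp(C)=(n,n-1,\ldots,1)$ and $C$ is ordered. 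Hence $\psi$ is well-defined.

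Surjectivity is already at hand: by Corollary~\ref{SurjectionCor} the map $\mathcal{P}_n(e,q)_{\mathrm{red}}\twoheadrightarrow LPL^{\infty}(n,e,q)_o$, $M\mapsto\mathrm{span}(M)/q\mathbb{Z}^n$, is onto, and under the maximality hypothesis its domain equals $\nabla_n(2e+1)_{\mathrm{red}}$, so $\psi$ is onto.

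For injectivity, suppose $M,M'\in\nabla_n(2e+1)_{\mathrm{red}}$ give the same code $C$. Then both are upper-triangular generator matrices for the lattice $\Lambda_C$ with all diagonal entries equal to the positive number $2e+1$ and all off-diagonal entries of absolute value $\le e$; this is precisely a (sign-normalized) Hermite normal form of $\Lambda_C$, and I would invoke the uniqueness of the Hermite normal form — phrased as the Proposition just before Corollary~\ref{SurjectionCor} — to conclude $M=M'$. The one point needing care is that the paper's "reduced" normalization uses off-diagonal entries in $[-e,e]$ rather than the usual $[0,2e)$; but the standard reduction argument in the proof of Corollary~\ref{SurjectionCor} shows the passage between the two is a bijection given by subtracting later rows from earlier ones, so uniqueness transports. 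I expect this HNF-uniqueness step to be the only genuine obstacle — everything else is either quoted (surjectivity, the perfect-matrix characterization, the linear and section constructions) or a direct unwinding of definitions. Combining the three parts, $\psi$ is a bijection.
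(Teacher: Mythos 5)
Your proof is correct and follows essentially the route the paper intends: the paper states this theorem without an explicit proof, as an immediate consequence of the remark that $\mathcal{P}_{n}(e,q)_{\mathrm{red}}=\nabla_{n}(2e+1)_{\mathrm{red}}$ under maximality, of Corollary~\ref{SurjectionCor} (surjectivity), and of the Hermite-normal-form uniqueness proposition preceding it (injectivity), with well-definedness implicit in the converse direction of Proposition~\ref{PerfectGMProp} via the linear and section constructions --- exactly the pieces you assemble, and in the same way.
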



Next we study isomorphism classes of perfect codes.

\begin{notation}
An unimodular integer matrix is a square matrix with determinant $1$ or $-1$. We denote by $\Gamma_n=\{M \in M_{n}(\mathbb{Z}): M \textrm{ is unimodular} \}$. If $A, B \in M_n(\mathbb{Z})$ we say that $A$ and $B$ are $\Gamma_n$-equivalent if there exists $U,V \in \Gamma_n$ such that $A=UBV$, we denote $A\underset{\Gamma}{\sim}B$ for this equivalence relation.
\end{notation}

We remark that two matrices $A$ and $B$ are $\Gamma$-equivalent if we can obtain one from the other through a finite number of elementary operations on the rows and on the columns. For $X \subseteq M_n(\mathbb{Z})$ we denote by $X/\Gamma_n$ the quotient space for this equivalence relation. 

\begin{theorem}\label{GralParamTheoremP3}
Let $(e,q)$ be an $n$-maximal pair. There is a parametrization $$\psi_{\mathcal{A}}: \frac{\nabla_{n}(2e+1)_{\mbox{red}}}{\Gamma_{n}} \rightarrow \frac{LPL^{\infty}(n,e,q)_{o}}{\mathcal{A}}$$ given by $\psi_{\mathcal{A}}(M)=[\psi(M)]_{\mathcal{A}}$ (where $\psi$ is as in Theorem \ref{GralParamTheoremP1} and $[C]_{\mathcal{A}}$ denotes the isomorphism class of $C$).
\end{theorem}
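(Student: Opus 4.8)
The plan is to build on Theorem \ref{GralParamTheoremP1}, which already gives a well-defined surjection $\psi: \nabla_n(2e+1)_{\mathrm{red}} \to LPL^\infty(n,e,q)_o$, and to understand precisely when two reduced matrices $M, M'$ give rise to algebraically equivalent ordered codes. The key reduction is that the relation $\sima$ on $LPL^\infty(n,e,q)_o$, restricted to ordered codes, should be detected on the level of the associated lattices by $\Gamma_n$-equivalence of the perfect matrices. Concretely: if $M$ is a generator matrix for the lattice $\Lambda_C$ and $M'$ for $\Lambda_{C'}$, then $C \sima C'$ should be equivalent to $\Lambda_C$ and $\Lambda_{C'}$ being carried to one another by a permutation of coordinates composed with a unimodular change of basis, and for ordered codes the permutation can be absorbed. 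So the first step is to show that $\psi_{\mathcal{A}}$ is well defined, i.e. that $M \simG M'$ (equivalently $M = UM'V$ with $U,V \in \Gamma_n$; note both sides are square so column operations $V$ change the lattice basis while row operations $U$ do not change the lattice but are needed to pass between different reduced representatives) implies $[\psi(M)]_{\mathcal{A}} = [\psi(M')]_{\mathcal{A}}$. For this I would observe that a column operation on $M$ corresponds exactly to applying an automorphism of $\Z_q^n$ (the reduction mod $q$ of a unimodular matrix is an automorphism of $\Z_q^n$), hence sends $C$ to an algebraically equivalent code; row operations do not change $\mathrm{span}(M)$ at all. Hence $\psi_{\mathcal A}$ is well defined.

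The second step is surjectivity, which is immediate: by Theorem \ref{GralParamTheoremP1} every ordered code is $\psi(M)$ for some reduced $M$, so every class $[C]_{\mathcal{A}}$ with $C$ ordered is $\psi_{\mathcal{A}}(M)$; and every algebraic equivalence class of codes in $LPL^\infty(n,e,q)$ contains an ordered representative by Proposition \ref{TransCor} (a coordinate permutation is in particular an isomorphism of $\Z_q^n$). The third and main step is injectivity: if $\psi(M) \sima \psi(M')$ with $M, M' \in \nabla_n(2e+1)_{\mathrm{red}}$, I must produce $U, V \in \Gamma_n$ with $M = UM'V$. Suppose $f: \Z_q^n \to \Z_q^n$ is an isomorphism with $f(\psi(M')) = \psi(M)$. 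Lift $f$ to an integer matrix $\widetilde F$; since $f$ is an automorphism of $\Z_q^n$, one checks $\det \widetilde F$ is a unit mod $q$, and after adjusting by a suitable unimodular matrix (or by noting that $q\Z^n \subseteq \Lambda_C$ so the lift can be taken to be the reduction of a unimodular matrix — this uses that $\Lambda_C$ contains $q\Z^n$) one gets a genuine unimodular $V$ with $\Lambda_C = \Lambda_{C'} V$. Then $M$ and $M' V$ are two generator matrices of the same lattice $\Lambda_C$, so they differ by a left unimodular factor: $M = U (M'V)$ for some $U \in \Gamma_n$. This gives $M \simG M'$, proving injectivity.

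The hard part will be the lifting argument in the injectivity step: one must be careful that an automorphism of $\Z_q^n$ need not lift to a unimodular integer matrix on the nose (e.g. multiplication by a unit $u$ with $1 < u < q$), but it does lift to a unimodular matrix \emph{modulo the subgroup of automorphisms fixing $q\Z^n \subseteq \Lambda$}, and what actually matters is only the induced map on lattices containing $q\Z^n$. The cleanest route is probably to argue entirely at the level of lattices: $C \sima C'$ iff there is $\varphi \in \mathrm{GL}_n(\Z_q)$ with $\varphi(\pi(\Lambda_{C'})) = \pi(\Lambda_C)$, and since $q\Z^n$ lies in both lattices, such a $\varphi$ is induced by some $V \in \Gamma_n$ with $\Lambda_{C'} V = \Lambda_C$; then invoke that two bases of one lattice differ by left-multiplication by $\Gamma_n$. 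A secondary subtlety is the role of the permutation $\theta$ from Proposition \ref{TransCor}: when we force both codes to be ordered, a coordinate permutation relating them is itself an element of $\Gamma_n$ (a permutation matrix), so it is subsumed under $\simG$ and causes no trouble. Assembling well-definedness, surjectivity, and injectivity then yields that $\psi_{\mathcal{A}}$ is the claimed bijection.
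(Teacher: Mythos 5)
Your skeleton (well-definedness because row operations preserve $\mbox{span}(M)$ while unimodular column operations descend to automorphisms of $\Zqn$; surjectivity from Theorem \ref{GralParamTheoremP1}) is fine and agrees with the paper. The genuine gap is in injectivity, precisely at the point you yourself flag as ``the hard part''. You must show: if some $f\in\mathrm{GL}_n(\Z_q)$ carries $\psi(M')$ onto $\psi(M)$, then $M\simG M'$. Both justifications you offer for producing a unimodular $V$ with $\Lambda_{C'}V=\Lambda_{C}$ fail as stated. An automorphism of $\Zqn$ is in general \emph{not} the reduction of a unimodular integer matrix (its determinant is only a unit mod $q$, not $\pm1$; already for $n=1$, $q=9$, multiplication by $2$ maps $3\Z_9$ onto itself but is not induced by $\pm1$), so ``the lift can be taken to be the reduction of a unimodular matrix'' is false, and so is the assertion in your ``cleanest route'' that ``such a $\varphi$ is induced by some $V\in\Gamma_n$''. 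What is true, and what you actually need, is that \emph{some} $V\in\Gamma_n$ with $\Lambda_{C'}V=\Lambda_C$ exists --- typically inducing a different automorphism than $\varphi$ --- and this existence is never proved in your proposal; it does not follow merely from $q\Z^n$ lying in both lattices.

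The paper closes exactly this gap with no lifting at all, using only the abstract group isomorphism $C\cong C'$ that $f$ induces. As recalled in Section 2 (Proposition 3.1 of \cite{AC}), the isomorphism class of $C=\mbox{span}(\overline{M})$ is determined by (and, by uniqueness of invariant factors, determines) the Smith normal form of $qM^{-1}$. Hence $\psi(M)\sima\psi(M')$ forces $qM^{-1}$ and $q(M')^{-1}$ to have the same Smith normal form, i.e. $U\bigl(qM^{-1}\bigr)V=q(M')^{-1}$ for some $U,V\in\Gamma_n$; inverting this identity and clearing the factor $q$ gives $V^{-1}MU^{-1}=M'$, i.e. $M\simG M'$. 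This inversion trick --- converting $\Gamma_n$-equivalence of the matrices $qM^{-1}$, which comes for free from the isomorphism type, into $\Gamma_n$-equivalence of the matrices $M$ --- is the ingredient your argument is missing, and you should substitute it for the lifting step. (The converse direction of this chain, needed for well-definedness, is exactly your column/row-operation observation, so the rest of your write-up stands.)
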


\begin{proof}
If $M$ is the generator matrix for a linear code $C\subseteq \mathbb{Z}_q^n$ then the matrix $qM^{-1}$ has integer coefficient and their Smith normal form determines the isomorphism class of $C$ (as abelian group). On the other hand for $M_1,M_2 \in \nabla_{n}(2e+1)_{\textrm{red}}$ we have the following equivalences:$$ \mbox{span}(\overline{M_1})\sima\mbox{span}(\overline{M_2}) \Leftrightarrow qM_1^{-1} \simG qM_2^{-1} \Leftrightarrow \exists U,V \in \Gamma_n : UqM_1^{-1}V=qM_2^{-1}$$ $$ \Leftrightarrow \exists U,V \in \Gamma_n : V^{-1}M_1 U^{-1}=M_2 \Leftrightarrow M_1 \simG M_2,$$ so $\psi_{\mathcal{A}}$ is well defined and is injective. Since $\psi$ is surjective then $\psi_{\mathcal{A}}$ is surjective, therefore $\psi_{\mathcal{A}}$ is a bijection.
\end{proof}

The next goal is to characterize what are the possible group isomorphism classes that can be represented by maximal perfect codes.

\begin{definition}\label{admissibleDef}
Let $G=\mathbb{Z}_{d_1}\times \ldots \times \mathbb{Z}_{d_m}$ with $d_1|d_2|\ldots|d_m$. We say that $G$ is an $(n,e,q)$-admissible structure if there exist $C \in LPL^{\infty}(n,e,q)$ such that $C\simeq G$ as abelian groups.
\end{definition}

\begin{lemma}\label{EquivGamma2DLemma}
For $a,x$ and $y$ non-zero integers we have $\left( \begin{matrix}
a & 0 \\ 0 & axy \end{matrix} \right)\simG  \left( \begin{matrix}
ay & a \\ 0 & ax \end{matrix} \right).$
\end{lemma}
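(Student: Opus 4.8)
The plan is to establish the $\Gamma$-equivalence directly, by producing explicit unimodular matrices carrying one side to the other; recall that $A \simG B$ means exactly that $A = UBV$ for some $U,V \in \Gamma_2$, equivalently that $B$ can be brought to $A$ by a finite sequence of elementary row and column operations. As a preliminary sanity check I would compare Smith normal forms: since $a \mid axy$, the matrix $\left( \begin{matrix} a & 0 \\ 0 & axy \end{matrix} \right)$ has Smith normal form $\mathrm{diag}(a,axy)$, while $\gcd(ay,a,ax)=a$ together with $\det\left( \begin{matrix} ay & a \\ 0 & ax \end{matrix} \right)=a^{2}xy$ shows the other matrix has the same Smith normal form. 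Hence the two matrices must be $\Gamma$-equivalent, and it only remains to exhibit the transformation.

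Concretely, I would take
$$U=\left( \begin{matrix} 1 & 0 \\ x & -1 \end{matrix} \right), \qquad V=\left( \begin{matrix} 0 & 1 \\ 1 & -y \end{matrix} \right),$$
both of which have determinant $-1$ and therefore lie in $\Gamma_2$. Left multiplication by $U$ replaces the rows $(R_1,R_2)$ of $\left( \begin{matrix} ay & a \\ 0 & ax \end{matrix} \right)$ by $(R_1,\,xR_1-R_2)$, which clears the $(2,2)$ entry and gives $\left( \begin{matrix} ay & a \\ axy & 0 \end{matrix} \right)$; right multiplication by $V$ then replaces the columns $(C_1,C_2)$ by $(C_2,\,C_1-yC_2)$, which produces $\left( \begin{matrix} a & 0 \\ 0 & axy \end{matrix} \right)$. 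Thus $U\left( \begin{matrix} ay & a \\ 0 & ax \end{matrix} \right)V=\left( \begin{matrix} a & 0 \\ 0 & axy \end{matrix} \right)$, which is the claimed equivalence; verifying this is a one-line matrix multiplication. (The hypothesis that $a,x,y$ are non-zero is not needed for the matrix identity itself — it is stated because the lemma is applied where the entries index genuine cyclic factors $\mathbb{Z}_a$, $\mathbb{Z}_{axy}$.)

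There is essentially no obstacle here: once the correct $U$ and $V$ are guessed the statement reduces to a routine computation, and the only thing requiring a moment's thought is ordering the operations so that the $(2,2)$ entry $ax$ is eliminated by the row operation \emph{before} the column swap, so that after the columns exchange roles the subtraction of $yC_2$ cancels the $(1,1)$ entry $ay$ cleanly. The point of isolating the upper-triangular representative $\left( \begin{matrix} ay & a \\ 0 & ax \end{matrix} \right)$ rather than the diagonal one is that it has the shape compatible with the perfect-matrix normal form used in the surrounding parametrizations, so this lemma is precisely the bridge that will let one realize the group structure $\mathbb{Z}_a \times \mathbb{Z}_{axy}$ by a generator matrix of the required triangular form.
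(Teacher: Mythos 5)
Your proof is correct: a direct check confirms $U\left(\begin{smallmatrix} ay & a \\ 0 & ax \end{smallmatrix}\right)V=\left(\begin{smallmatrix} a & 0 \\ 0 & axy \end{smallmatrix}\right)$ with $\det U=\det V=-1$, so both factors are unimodular and the stated $\Gamma$-equivalence follows. This is essentially the paper's approach — the paper performs the same elementary row and column operations, only written out as a step-by-step chain of $\Gamma$-equivalences rather than packaged into a single pair $U,V$.
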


\begin{proof} We have the following chain of $\Gamma$-equivalence:
$$\left( \begin{matrix} a & 0 \\ 0 & axy \end{matrix} \right)\simG  \left( \begin{matrix} a & a \\ 0 & axy \end{matrix} \right)\simG  \left( \begin{matrix} a & a \\ ax & axy+ax \end{matrix} \right)\simG \left( \begin{matrix} a & a-ay \\ ax & ax \end{matrix} \right)$$ $$\simG \left( \begin{matrix} a & a-ay \\ 0 & ax \end{matrix} \right)\simG  \left( \begin{matrix} ay & a \\ 0 & ax \end{matrix} \right) $$
\end{proof}

\begin{lemma}\label{TriangularFormLemma}
If $M$ is a $n\times n$ integer matrix with determinant $(2e+1)^n$, then $\Gamma_{n}M\Gamma_{n} \cap \nabla_{n}(2e+1)_{\mbox{red}} \neq \emptyset$. Moreover, $M$ is $\Gamma$-equivalent to a bidiagonal matrix $A \in \nabla_{n}(2e+1)_{\mbox{red}}$.
\end{lemma}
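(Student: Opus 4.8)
The plan is to reduce any integer matrix $M$ with $\det M = (2e+1)^n$ to a bidiagonal matrix in $\nabla_n(2e+1)_{\mathrm{red}}$ using only row and column operations, proceeding by induction on $n$. The base case $n=1$ is trivial: $M=(\pm(2e+1))$, and multiplying by the unimodular $(-1)$ if necessary gives $(2e+1)\in\nabla_1(2e+1)_{\mathrm{red}}$.

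For the inductive step I would first bring $M$ into (column) Hermite normal form, so that $M$ becomes lower (or upper) triangular; multiplying rows by $-1$ where needed we may assume the diagonal entries $m_1,\dots,m_n$ are positive. Since $\det M = m_1\cdots m_n = (2e+1)^n$ is unchanged by these operations, the diagonal entries multiply to $(2e+1)^n$ but need not individually equal $2e+1$. The key step is therefore to massage the diagonal: using an elementary operation of the type in Lemma \ref{EquivGamma2DLemma} on a $2\times 2$ block sitting on the diagonal, a block $\mathrm{diag}(a, a\,xy)$ becomes $\left(\begin{smallmatrix} ay & a \\ 0 & ax \end{smallmatrix}\right)$, which lets me move prime factors of $2e+1$ between adjacent diagonal entries. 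Iterating this (and, where entries do not share the needed common factor, first performing the standard Smith-form clearing so that the diagonal becomes a divisor chain), I can arrange every diagonal entry to equal $2e+1$: concretely, the Smith normal form of $M$ is $\mathrm{diag}(s_1,\dots,s_n)$ with $s_1\mid\cdots\mid s_n$ and $\prod s_i=(2e+1)^n$, and repeatedly applying the $2\times2$ move of Lemma \ref{EquivGamma2DLemma} to pull a factor from $s_n$ down into $s_1$ balances the chain to $(2e+1,\dots,2e+1)$. Once the matrix is upper triangular with all diagonal entries $2e+1$, a further sequence of $2\times2$ operations of the Lemma \ref{EquivGamma2DLemma} type on consecutive rows/columns clears every superdiagonal entry beyond the first one above the diagonal, producing a bidiagonal matrix; finally, subtracting suitable integer multiples of row $i+1$ from row $i$ reduces each secondary-diagonal entry to the range $[-e,e]$ without destroying bidiagonality, giving $A\in\nabla_n(2e+1)_{\mathrm{red}}$. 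This $A$ lies in $\Gamma_n M\Gamma_n$, so in particular $\Gamma_n M\Gamma_n\cap\nabla_n(2e+1)_{\mathrm{red}}\neq\emptyset$, and the ``moreover'' is exactly the bidiagonality just obtained.

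The main obstacle will be the bookkeeping in the diagonal-balancing step: Lemma \ref{EquivGamma2DLemma} in its stated form requires the block to look like $\mathrm{diag}(a,axy)$, i.e. the smaller entry must divide the larger, so one cannot apply it directly to an arbitrary pair of adjacent diagonal entries. The honest route is to pass through the Smith normal form first (which is $\Gamma_n$-equivalent to $M$ and has a divisibility chain by construction), and only then invoke Lemma \ref{EquivGamma2DLemma} repeatedly to redistribute the prime factors of $(2e+1)^n$ evenly across the diagonal; one must check that at each stage the relevant $2\times2$ block still has the divisibility property, which follows because at every step both entries remain products of primes dividing $2e+1$ and one is chosen to divide the other. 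The reduction of off-diagonal entries modulo $2e+1$ at the end is routine and does not affect $\Gamma_n$-equivalence.
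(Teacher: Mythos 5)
Your proposal is correct and follows essentially the same route as the paper: pass to the Smith normal form $\mathrm{diag}(d_1,\dots,d_n)$, observe that $d_1\mid 2e+1\mid d_n$, and repeatedly apply Lemma \ref{EquivGamma2DLemma} to the pair $(d_1,d_n)$ to peel off one diagonal entry equal to $2e+1$ at a time; the paper packages your iterative ``balancing'' as an induction on $n$, which also handles the divisibility bookkeeping you flag, since the inductive hypothesis re-applies the Smith normal form to the remaining block of determinant $(2e+1)^{n-1}$. The one slip is the claim that the final reduction of superdiagonal entries preserves bidiagonality (subtracting a multiple of row $i+1$ from row $i$ pollutes position $(i,i+2)$), but this is harmless because each off-diagonal entry the construction introduces is a proper divisor of the odd number $2e+1$ and hence already at most $e$, matching the level of detail the paper itself gives for this step.
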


\begin{proof}
For $n=1$, $\det(M)=2e+1$ implies $M=(2e+1)\in \nabla_{1}(2e+1)_{\mbox{red}}$. 
Let us suppose that the result is true for $n-1$ and we consider a $n\times n$ integer matrix $M$ with $\det(M)=(2e+1)^n$. By Smith normal form $M \simG D$ where $D=   \mbox{diag}(d_1,d_2,\ldots,d_n)$ is a diagonal matrix with $d_1 \mid d_2 \mid \ldots \mid d_n$ and $d_1d_2\ldots d_n = (2e+1)^{n}$, in particular $d_n=(2e+1)x$ and $2e+1=d_1 y$ for some integers $x$ and $y$. Permuting the second and $n$th rows of $D$ and then the second and $n$th column we have $D \simG \widetilde{D}:=\mbox{diag}(d_1,d_n,d_3,\ldots, d_{n-1},d_2)$. Applying Lemma \ref{EquivGamma2DLemma} with $d_n=d_1xy$ we obtain $\widetilde{D} \simG \left( \begin{matrix} 2e+1 & v \\ 0^{t} & D_0 \end{matrix} \right)$ where $v=(d_1,0,\ldots,0)\in \mathbb{Z}^{n-1}$ and $D_0=\mbox{diag}(d_1 x, d_3, \ldots, d_{n-1},d_2)$. By inductive hypothesis there exists unimodular matrices $U_0,V_0 \in \Gamma_{n-1}$ such that $U_0 D_0 V_0 \in \nabla_{n-1}(2e+1)$ with $U_0D_0V_0$ bidiagonal, thus 
$$ \left( \begin{matrix} 1 & 0 \\ 0^{t} & U_0   \end{matrix} \right)  \left( \begin{matrix} 2e+1 & v \\ 0^{t} & U_0  \end{matrix} \right)\left( \begin{matrix} 1 & 0 \\ 0^{t} & V_0  \end{matrix} \right)=\left( \begin{matrix} 2e+1 & vV_0  \\ 0^{t} & U_0D_0V_0  \end{matrix} \right)$$ is a bidigonal matrix in  $\nabla_{n}(2e+1)$. Since this matrix have $2e+1$ in the main diagonal, we can obtain a reduced matrix from this, by applying some elementary operations on rows, thus the result holds for $n$.
\end{proof}

\begin{corollary}
If we denote by $M_n(\mathbb{Z},det=D)$ the set of matrices $M \in M_{n}(\mathbb{Z})$ with $\det(M)=D$, each equivalence class in $\nabla_{n}(2e+1)_{\textrm{red}}/\Gamma_n$ is contained in exactly one equivalence class in $M_n(\mathbb{Z},det=(2e+1)^n)/\Gamma_n$. Moreover, both quotient sets have the same number of elements.
\end{corollary}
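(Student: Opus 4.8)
The plan is to show that the inclusion $\nabla_{n}(2e+1)_{\mbox{red}}\subseteq M_n(\mathbb{Z},\det=(2e+1)^n)$ — valid because an upper triangular matrix with every diagonal entry equal to $2e+1$ has determinant exactly $(2e+1)^n$ — descends to a bijection between the two quotient sets, so that they are equinumerous.

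First I would fix the reading of the notation: both $\nabla_{n}(2e+1)_{\mbox{red}}/\Gamma_n$ and $M_n(\mathbb{Z},\det=(2e+1)^n)/\Gamma_n$ denote the sets of classes of the relation $\simG$ restricted to the respective set, so in each case the classes partition that set. For $A\in\nabla_{n}(2e+1)_{\mbox{red}}$ write $[A]$ for its class in the first quotient and $[A]'$ for its class in the second. Since $[A]=\{B\in\nabla_{n}(2e+1)_{\mbox{red}}:B\simG A\}\subseteq\{B\in M_n(\mathbb{Z},\det=(2e+1)^n):B\simG A\}=[A]'$ and the classes $[A]'$ partition $M_n(\mathbb{Z},\det=(2e+1)^n)$, the set $[A]$ meets, hence is contained in, exactly one of them, namely $[A]'$. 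This proves the first assertion and simultaneously shows that $[A]\mapsto[A]'$ is a well-defined map $\iota$ between the two quotient sets.

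Next I would verify that $\iota$ is a bijection. Injectivity is immediate from the definitions: if $[A]'=[B]'$ then $A$ and $B$ are $\Gamma_n$-equivalent as integer matrices, which is precisely what $[A]=[B]$ asserts. For surjectivity — the one place where an actual input is needed — take any $M$ with $\det(M)=(2e+1)^n$; by Lemma \ref{TriangularFormLemma} there is $A\in\nabla_{n}(2e+1)_{\mbox{red}}$ (even bidiagonal) with $A\simG M$, so $[M]'=\iota([A])$ lies in the image. Hence $\iota$ is onto, therefore bijective, and the two quotient sets have the same number of elements. The only non-routine ingredient is Lemma \ref{TriangularFormLemma}; everything else is bookkeeping about restricting an equivalence relation to a subset, and I foresee no further obstacle.
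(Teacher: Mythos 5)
Your proof is correct and is essentially the argument the paper intends: the corollary is stated as an immediate consequence of Lemma \ref{TriangularFormLemma}, with the surjectivity of the induced map $[A]\mapsto[A]'$ coming from that lemma and the rest being the routine bookkeeping about restricting the equivalence relation $\simG$ to the subset $\nabla_{n}(2e+1)_{\mbox{red}}\subseteq M_n(\mathbb{Z},\det=(2e+1)^n)$. No gaps.
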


\begin{theorem}\label{AdmissibleTheorem}
Let $(e,q)$ be an $n$-maximal pair where $q=(2e+1)t$ and $G=\mathbb{Z}_{d_1}\times \ldots \times \mathbb{Z}_{d_n}$ with $d_1 | d_2 | \ldots | d_n$. Then $G$ is a $(n,e,q)$-admissible structure if and only if $d_1d_2\ldots d_n=t^n$ and $d_n|q$.
\end{theorem}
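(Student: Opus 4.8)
The plan is to use the parametrization of ordered codes by reduced $(e,q)$-perfect matrices together with the Smith normal form machinery. Since $(e,q)$ is $n$-maximal we have $\mathcal{P}_n(e,q)=\nabla_n(2e+1)$, so by Theorem \ref{GralParamTheoremP1} every ordered code arises as $\mathrm{span}(M)/q\Z^n$ with $M\in\nabla_n(2e+1)_{\mathrm{red}}$, and every linear perfect code is isometric (hence isomorphic as a group) to an ordered one by Proposition \ref{TransCor}. Recall also from Section 2 that if $M$ is a generator matrix for $C$ then $C\simeq\Z_{a_1}\times\cdots\times\Z_{a_n}$ where $\mathrm{diag}(a_1,\dots,a_n)$ is the Smith normal form of $qM^{-1}$, and $\#C=\det(qM^{-1})=q^n/\det(M)$.

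For the necessity direction, suppose $C\in LPL^\infty(n,e,q)$ with $C\simeq G$. The sphere packing condition gives $\#C=t^n$, hence $d_1d_2\cdots d_n=t^n$. For the divisibility $d_n\mid q$: since Corollary \ref{MinkoskiConjStandardCor} shows $C$ is standard, it is of type $i$ for some $i$, so $(2e+1)e_i\in C$, an element of order $t$; more to the point, $qc=0$ for every $c\in C$, so the exponent of $G$ divides $q$, i.e. $d_n\mid q$. (This part does not even need maximality.)

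For the sufficiency direction, assume $d_1\mid\cdots\mid d_n$, $d_1\cdots d_n=t^n$ and $d_n\mid q$. Consider $D=\mathrm{diag}(d_1,\dots,d_n)$ and set $A=qD^{-1}=\mathrm{diag}(q/d_1,\dots,q/d_n)$; note $q/d_n\mid q/d_{n-1}\mid\cdots\mid q/d_1$ and $\det A=q^n/t^n=(2e+1)^n$. By Lemma \ref{TriangularFormLemma} (applied to the integer matrix $A$, which has determinant $(2e+1)^n$), $A$ is $\Gamma_n$-equivalent to some reduced bidiagonal matrix $N\in\nabla_n(2e+1)_{\mathrm{red}}$. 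Then $M:=qN^{-1}$ is $\Gamma_n$-equivalent to $qA^{-1}=D$, so $M$ has Smith normal form $D$, hence $M$ is an integer matrix and $\mathrm{span}(N)/q\Z^n$ is a code whose group structure is $\Z_{d_1}\times\cdots\times\Z_{d_n}=G$. Since $N\in\nabla_n(2e+1)_{\mathrm{red}}=\mathcal{P}_n(e,q)_{\mathrm{red}}$, Theorem \ref{GralParamTheoremP1} guarantees $\psi(N)=\mathrm{span}(N)/q\Z^n\in LPL^\infty(n,e,q)_o$, which is the desired perfect code. The one point that needs a little care is checking that $M=qN^{-1}$ genuinely has integer entries and Smith form exactly $D$: this follows because $N\simG A$ implies $qN^{-1}\simG qA^{-1}=D$ and $\Gamma_n$-equivalence preserves both integrality and Smith normal form. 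The main obstacle is verifying that the hypotheses on the $d_i$ (the chain condition, the product $t^n$, and $d_n\mid q$) are exactly what is needed to invoke Lemma \ref{TriangularFormLemma} — in particular that the diagonal entries $q/d_i$ of $A$ form a divisibility chain with product $(2e+1)^n$, which is where $d_1\mid\cdots\mid d_n$ and $d_1\cdots d_n=t^n$ are used, while $d_n\mid q$ ensures $A$ has integer entries in the first place.
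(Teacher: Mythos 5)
Your proof is correct, and while the necessity half coincides with the paper's (cardinality $t^n$ from the sphere packing condition, exponent dividing $q$ since $C\subseteq\mathbb{Z}_q^n$), your sufficiency argument takes a genuinely different route. The paper proves sufficiency non-constructively, by counting: using Theorem \ref{GralParamTheoremP3}, Lemma \ref{TriangularFormLemma} and the Smith normal form it identifies the number of isomorphism classes of maximal codes with $\#\mathcal{F}$, where $\mathcal{F}=\{f_1\mid\cdots\mid f_n,\ f_1\cdots f_n=(2e+1)^n\}$, and then exhibits the involution $x_i\mapsto q/x_{n+1-i}$ on divisibility chains in $q$ carrying $\mathcal{F}$ onto $\mathcal{D}=\{d_1\mid\cdots\mid d_n,\ d_1\cdots d_n=t^n,\ d_n\mid q\}$; since the invariant-factor map injects the set of isomorphism classes into $\mathcal{D}$ and the two finite sets have equal cardinality, that injection must be onto. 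You instead realize each admissible tuple explicitly: $A=qD^{-1}$ is integral precisely because $d_n\mid q$, has determinant $(2e+1)^n$, and Lemma \ref{TriangularFormLemma} reduces it to some $N\in\nabla_n(2e+1)_{\mathrm{red}}$; maximality makes $N$ an $(e,q)$-perfect matrix, Theorem \ref{GralParamTheoremP1} makes $\mathrm{span}(N)/q\mathbb{Z}^n$ a perfect ordered code, and $qN^{-1}\simG D$ pins down its group structure as $G$. Both arguments hinge on Lemma \ref{TriangularFormLemma}; yours applies the paper's involution $d\mapsto q/d$ pointwise as a construction rather than globally as a counting device, which is more informative --- it produces an explicit bidiagonal generator matrix for each admissible structure and yields, essentially for free, the corollary stated after the theorem that a maximal code whose generator matrix has Smith form $\mathrm{diag}(d_1,\dots,d_n)$ is isomorphic to $\mathbb{Z}_{q/d_n}\times\cdots\times\mathbb{Z}_{q/d_1}$.
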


\begin{proof}
The direct implication follows from the fact that if $C\in LPL^{\infty}(n,e,q)$ then $\#C=t^n$ and $qC=\{0\}$ (because $C \subseteq \Zqn$). We denote by $\mathcal{D}=\{(d_1,\ldots, d_n)\in \mathbb{N}: d_1|\ldots|d_n, d_1\ldots d_n=t^n, d_n|q\}$. To prove the converse implication it suffices to prove that $\#\mathcal{D}= \# LPL^{\infty}(n,e,q)_{\mbox{red}}/\mathcal{A}$ (where $X/\mathcal{A}$ denotes the set of isomorphism classes of codes in $X$). By Theorem \ref{GralParamTheoremP3}, Lemma \ref{TriangularFormLemma} and the Smith normal form theorem we have that $\# LPL^{\infty}(n,e,q)_{\mbox{red}}/\mathcal{A}=\#\mathcal{F}$ where $\mathcal{F}=\{(f_1,\ldots, f_n)\in \mathbb{N}: f_1|\ldots|f_n, f_1\ldots f_n=(2e+1)^n\}$, so it suffices to prove that $\# \mathcal{D} = \# \mathcal{F}$. We consider $X=\{(x_1,\ldots,x_n)\in \mathbb{N}^n: x_1\mid \ldots \mid x_n \mid q\}$ and the involution $\psi: X \rightarrow X$ defined by $\psi(x_1,\ldots,x_n)=(y_1,\ldots, y_n)$ where $x_iy_j=q$ if $i+j=n+1$. For $x=(x_1,\ldots,x_n)\in X$ we denote by $p(x)=x_1\ldots x_n$. Since $(2e+1)^{n-1}\mid t$, then $\mathcal{F}\subseteq X$ and the property $p(\psi(a))\cdot p(a)=q^n$ imply $\psi(\mathcal{F})=\mathcal{D}$ and $\#\mathcal{F}=\#\mathcal{D}$.
\end{proof}

The involution argument in the above proof give us the following corollary.

\begin{corollary}
Let $q=(2e+1)t$ with $(2e+1)^{n-1}\mid t$ and $C \in LPL^{\infty}(n,e,q)$ with generator matrix $M$. If the Smith normal form of $M$ is given by $D=\mbox{diag}(d_1,\ldots,d_n)$, then $C \simeq \mathbb{Z}_{{q}/{d_n}}\times \mathbb{Z}_{{q}/{d_{n-1}}}\times \ldots \times \mathbb{Z}_{{q}/{d_1}}$.
\end{corollary}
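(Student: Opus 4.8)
The statement to prove is the final Corollary: for $q=(2e+1)t$ with $(2e+1)^{n-1}\mid t$ and $C\in LPL^\infty(n,e,q)$ with generator matrix $M$ whose Smith normal form is $D=\operatorname{diag}(d_1,\ldots,d_n)$ (so $d_1\mid\cdots\mid d_n$), we must show $C\simeq \Z_{q/d_n}\times\cdots\times\Z_{q/d_1}$. The plan is to combine two facts already available: first, from the preliminaries, the group structure of a code $C$ with generator matrix $M$ is governed by the Smith normal form of $A=qM^{-1}$, namely if $\operatorname{diag}(a_1,\ldots,a_n)$ is that form then $C\simeq\Z_{a_1}\times\cdots\times\Z_{a_n}$; second, the elementary but crucial relation between the Smith form of $M$ and that of $qM^{-1}$. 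So the real content is the algebraic computation relating $\operatorname{SNF}(qM^{-1})$ to $\operatorname{SNF}(M)=D$.

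First I would observe that since $C\in LPL^\infty(n,e,q)$ the sphere-packing condition gives $\det M=\pm q^n/t^n$... more precisely $\#C=q^n/\det(M)=t^n$, hence $\det M=\pm(2e+1)^n$ (up to sign, $\det M=(2e+1)^n$ after the usual normalization). Write $M=U D V$ with $U,V\in\Gamma_n$ unimodular and $D=\operatorname{diag}(d_1,\ldots,d_n)$. Then $qM^{-1}=q\,V^{-1}D^{-1}U^{-1}$, and since $V^{-1},U^{-1}$ are again unimodular, $qM^{-1}\simG q D^{-1}=\operatorname{diag}(q/d_1,\ldots,q/d_n)$ as long as this matrix is integral. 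Integrality is exactly where the hypothesis $(2e+1)^{n-1}\mid t$ enters: each $d_i$ divides $\det M=(2e+1)^n$, so $d_i\mid(2e+1)^n$; combined with $d_i\mid d_n$ and $d_1\cdots d_n=(2e+1)^n$, one gets $d_n\mid (2e+1)^n$, and since $(2e+1)^n\mid q$ (equivalently $(2e+1)^{n-1}\mid t$), indeed $d_n\mid q$, so every $q/d_i$ is a positive integer. Thus $qM^{-1}$ is an integer matrix $\Gamma$-equivalent to the \emph{anti}-sorted diagonal $\operatorname{diag}(q/d_1,\ldots,q/d_n)$.

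The next step is to put this diagonal matrix into Smith normal form: since $d_1\mid d_2\mid\cdots\mid d_n$ we have $q/d_n\mid q/d_{n-1}\mid\cdots\mid q/d_1$, so the Smith normal form of $qM^{-1}$ is $\operatorname{diag}(q/d_n,q/d_{n-1},\ldots,q/d_1)$ (just a permutation of the diagonal entries, which is realized by unimodular row/column swaps, and the divisibility chain confirms this is genuinely the SNF). Applying the structure result from the preliminaries with $(a_1,\ldots,a_n)=(q/d_n,\ldots,q/d_1)$ yields $C\simeq\Z_{q/d_n}\times\Z_{q/d_{n-1}}\times\cdots\times\Z_{q/d_1}$, which is exactly the claim. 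This is essentially the involution $\psi$ on divisor chains used in the proof of Theorem \ref{AdmissibleTheorem}, specialized to the chain attached to a single code.

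The only genuine obstacle is confirming the integrality of $qM^{-1}$, i.e.\ that $d_n\mid q$; everything else is bookkeeping with unimodular equivalence and the divisibility chain. I would handle it as sketched: $d_n$ divides $\det M=(2e+1)^n$, and the maximality hypothesis $(2e+1)^{n-1}\mid t$ forces $(2e+1)^n\mid q$, hence $d_n\mid(2e+1)^n\mid q$. (Note this is precisely why the corollary is stated under the $n$-maximal hypothesis and not for general $q$: without it, $qM^{-1}$ need not be diagonalizable by unimodular matrices into the clean anti-sorted form, although it is of course always an integer matrix since $M$ is a generator matrix — but the identification of its SNF with the reversed chain $q/d_i$ can fail when some $q/d_i$ is not an integer.) One should also double-check the sign/normalization of $\det M$ so that $q/d_i>0$, which is immediate after the standard choice making the diagonal of the Hermite form positive.
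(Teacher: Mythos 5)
Your argument is correct and is essentially the paper's: the corollary is obtained there by applying the involution $d_i\mapsto q/d_{n+1-i}$ on divisor chains from the proof of Theorem \ref{AdmissibleTheorem}, and your computation of the Smith normal form of $qM^{-1}$ from the decomposition $M=UDV$ is precisely that involution made explicit for a single code. Your only slight misstatement is the claim that integrality of $\mathrm{diag}(q/d_1,\ldots,q/d_n)$ is where the maximality hypothesis enters --- in fact it is automatic, since this matrix equals $V(qM^{-1})U$, a product of integer matrices --- but your separate verification via $d_n\mid(2e+1)^n\mid q$ is also valid, so the proof stands.
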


\begin{remark}
The previous corollary does not hold for non-maximal codes.
\end{remark}

The following corollary give us the number of isomorphism classes of maximal perfect codes in $LPL^{\infty}(n,e,q)$.

\begin{corollary}\label{NumberofIsomClassesCor}
Let $q=(2e+1)t$ with $(2e+1)^{n-1}\mid t$ and $f(x)$ be the generating function $f(x)=\frac{1}{(1-x)(1-x^2)\ldots(1-x^n)}$. If $\nu_p(m)$ is the exponent of the prime $p$ in the factorial decomposition of $m$, then the number of isomorphism classes of perfect codes in $LPL^{\infty}(n,e,q)$ is given by:
$$ \prod_{p\mid 2e+1} [x^{n\nu_{p}(2e+1)}]f(x).$$
In particular for $n=2$ this number is given by $$\prod_{p}[x^{2\nu_p(2e+1)}]\frac{1}{(1-x)(1-x^2)}=\prod_{p}\left(\nu_{p}(2e+1)+1\right)=\sigma_0(2e+1),$$ the number of divisor of $2e+1$ (according with Corollary \ref{NumberOfCodes2DCor}, since $\gcd(2e+1,t)=2e+1$). For $n=3$ this number is given by $$\prod_{p}[x^{3\nu_p(2e+1)}]\frac{1}{(1-x)(1-x^2)(1-x^3)}=\prod_{p} \lceil {3}/{4}\cdot (\nu_p(2e+1)+1)^2 \rfloor$$ where $\lceil x \rfloor$ denotes the nearest integer to $x$. In particular when $2e+1$ is square-free this number is $3^{\omega(2e+1)}$ where $\omega(n)$ is the number of distinct prime divisors of $n$.
\end{corollary}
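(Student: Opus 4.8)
The plan is to count isomorphism classes of maximal perfect codes in $LPL^{\infty}(n,e,q)$ by translating the problem into a purely combinatorial one about divisor chains, using the machinery already established. By Theorem \ref{AdmissibleTheorem} (or rather its proof), when $(e,q)$ is $n$-maximal the number of isomorphism classes equals $\#\mathcal{F}$, where $\mathcal{F}=\{(f_1,\ldots,f_n)\in\mathbb{N}^n : f_1\mid f_2\mid\cdots\mid f_n,\ f_1 f_2\cdots f_n=(2e+1)^n\}$. So the whole task reduces to counting chains of divisors of $2e+1$ whose product is $(2e+1)^n$. First I would note that this count is multiplicative over the prime factorization of $2e+1$: writing $2e+1=\prod_p p^{\nu_p(2e+1)}$, a chain $(f_1\mid\cdots\mid f_n)$ with product $(2e+1)^n$ is equivalent to independently choosing, for each prime $p$, a nondecreasing sequence of exponents $0\le a_1\le a_2\le\cdots\le a_n\le \nu_p(2e+1)$ with $\sum_i a_i = n\,\nu_p(2e+1)$. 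Hence $\#\mathcal{F}=\prod_{p\mid 2e+1} N(n,\nu_p(2e+1))$, where $N(n,\nu)$ is the number of such exponent sequences.

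The second step is to identify $N(n,\nu)$ with the claimed coefficient $[x^{n\nu}]f(x)$, where $f(x)=\prod_{k=1}^{n}(1-x^k)^{-1}$. A nondecreasing sequence $0\le a_1\le\cdots\le a_n\le\nu$ with $\sum a_i = n\nu$ corresponds, via the complementary substitution $b_i=\nu-a_{n+1-i}$, to a nondecreasing sequence $0\le b_1\le\cdots\le b_n\le\nu$ with $\sum b_i = n\nu - n\nu = $ wait --- more carefully, $\sum b_i = n\nu-\sum a_i=0$, which is degenerate; instead the right bijection is a partition-conjugation argument. A nondecreasing sequence $0\le a_1\le\cdots\le a_n\le\nu$ is the same as a partition fitting in an $n\times\nu$ box; requiring $\sum a_i=n\nu$ would force the full box. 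So I must instead count sequences with $\sum a_i=n\nu$ only after recentering: set $a_i = $ the $i$-th part and note the generating function for partitions into at most $n$ parts (equivalently, parts of size at most $n$ after conjugation) is exactly $f(x)=\prod_{k=1}^n(1-x^k)^{-1}$, and the number with a given sum $m$ and parts bounded by $\nu$ is $[x^m]\prod_{k=1}^{n}\frac{1-x^{(\nu+1)k}}{1-x^k}$; evaluating at $m=n\nu$ (which lies in the symmetric ``middle'' of the box) and using the symmetry $p(m\mid n\times\nu\ \text{box})=p(n\nu-m\mid n\times\nu\ \text{box})$ reduces the bounded count at $m=n\nu$ to the unbounded count $[x^{n\nu}]f(x)$. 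This gives $N(n,\nu)=[x^{n\nu}]f(x)$, and combining with the multiplicativity yields the main formula $\prod_{p\mid 2e+1}[x^{n\nu_p(2e+1)}]f(x)$.

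The remaining steps are the explicit evaluations for $n=2$ and $n=3$. For $n=2$, $f(x)=\frac{1}{(1-x)(1-x^2)}$ has $[x^{2\nu}]f(x)=\nu+1$ (the number of ways to write $2\nu$ as $a+2b$ with $a,b\ge 0$, or directly from the partial-fraction expansion), so the product becomes $\prod_p(\nu_p(2e+1)+1)=\sigma_0(2e+1)$, matching Corollary \ref{NumberOfCodes2DCor} since $d_1=\gcd(2e+1,t)=2e+1$ in the maximal case. For $n=3$, I would extract $[x^{3\nu}]\frac{1}{(1-x)(1-x^2)(1-x^3)}$; the standard closed form for this coefficient (the number of partitions of $3\nu$ into parts $\le 3$) is the nearest integer to $\frac{(3\nu+3)^2}{12}=\frac{3}{4}(\nu+1)^2$, giving $\prod_p\lceil\frac34(\nu_p(2e+1)+1)^2\rfloor$, and when $2e+1$ is squarefree every $\nu_p=1$ so each factor is $\lceil 3\rfloor=3$, yielding $3^{\omega(2e+1)}$.

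The main obstacle is the second step: getting the bijection between exponent sequences summing to $n\nu$ and the unbounded-partition generating function $f(x)$ exactly right, i.e.\ justifying that the box-symmetry kills the upper-bound constraint precisely at the middle value $m=n\nu$. Everything else --- multiplicativity over primes, and the two small-$n$ coefficient extractions --- is routine generating-function bookkeeping. One must be careful that $f(x)$ as written counts partitions with parts of size at most $n$ (equivalently at most $n$ parts), and that $n\nu$ is the correct ``half-box'' value making the unbounded and bounded counts agree.
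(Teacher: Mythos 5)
There is a genuine gap in your central step, and it is self-inflicted: you impose a constraint that is not actually present. The set $\mathcal{F}=\{(f_1,\ldots,f_n): f_1\mid\cdots\mid f_n,\ f_1\cdots f_n=(2e+1)^n\}$ from the proof of Theorem \ref{AdmissibleTheorem} carries no requirement that $f_n$ divide $2e+1$; for instance $(1,\ldots,1,(2e+1)^n)\in\mathcal{F}$ (the cyclic structure). Consequently the $p$-exponent sequences attached to elements of $\mathcal{F}$ are nondecreasing sequences $0\le a_1\le\cdots\le a_n$ of nonnegative integers with $\sum_i a_i=n\nu_p(2e+1)$ and \emph{no upper bound} on $a_n$. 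These are exactly the partitions of $n\nu_p(2e+1)$ into at most $n$ parts, whose number is $[x^{n\nu_p(2e+1)}]f(x)$ with no further argument needed; the paper obtains this by the substitution $x_i=y_n+\cdots+y_{n+1-i}$, i.e.\ the usual conjugation. Your added bound $a_n\le\nu_p(2e+1)$ creates a spurious difficulty --- as you yourself observe, it forces $a_1=\cdots=a_n=\nu_p(2e+1)$ and yields a count of $1$ --- and the box-symmetry patch does not repair it: the symmetry of partitions in an $n\times\nu$ box is centered at $m=n\nu/2$, so at $m=n\nu$ it relates the bounded count to the bounded count at $m=0$ (both equal to $1$); it never converts a bounded count into the unbounded count $[x^{n\nu}]f(x)$. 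The failure is already visible for $n=2$, $2e+1=p$: $\mathcal{F}=\{(1,p^2),(p,p)\}$ has two elements, matching $\sigma_0(p)=2$, whereas your bounded version retains only $(p,p)$.

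The remaining ingredients of your proposal are sound and agree with the paper: the reduction of the count to $\#\mathcal{F}$ via Theorem \ref{GralParamTheoremP3}, Lemma \ref{TriangularFormLemma} and the Smith normal form; the multiplicativity over the primes dividing $2e+1$; and the coefficient extractions $[x^{2\nu}]\frac{1}{(1-x)(1-x^2)}=\nu+1$ and $[x^{3\nu}]\frac{1}{(1-x)(1-x^2)(1-x^3)}=\lceil 3(\nu+1)^2/4\rfloor$, together with the square-free specialization. Once the upper bound is deleted, the troublesome step disappears and your argument coincides with the paper's.
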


\begin{proof}
Let $X(\alpha)=\{(x_1,\ldots,x_n)\in\mathbb{N}^{n}: x_1 \leq \ldots \leq x_n, x_1+\ldots+x_n = n\alpha\}$ for $\alpha\in\mathbb{Z}^{+}$ and $\nu_p(a_1,\ldots,a_n):=(\nu_p(a_1),\ldots, \nu_p(a_n))$ (where $\nu_p(m)$ denote the exponent of $p$ in $m$). If $\mathcal{F}$ is as in the proof of Theorem \ref{AdmissibleTheorem}, then for each prime divisor $p\mid 2e+1$ and for each $a\in \mathcal{F}$ we have $\nu_{p}(a)\in X(\nu_p(2e+1))$. In this way we have a bijection between $\mathcal{F}$ and $\prod_{p}X(\nu_p(2e+1))$ where $p$ runs over the prime divisors of $2e+1$, in particular the number of isomorphism classes of $(n,e,q)$-codes (with $(2e+1)^{n-1}\mid t$) is given by $\#\mathcal{F}=\prod_{p\mid 2e+1}\#X(\nu_p(2e+1))$. With the standard change of variable $x_i=y_n+\ldots+y_{n+1-i}$ for $1\leq i \leq n$ we have $\#X(\alpha)=\#\{(y_1,\ldots,y_n)\in \mathbb{N}^{n}: y_1+2y_2+\ldots+ny_n=n\alpha\}$ which clearly is the coefficient of $x^{n\alpha}$ in the generating function $f(x)=\frac{1}{(1-x)(1-x^2)\ldots(1-x^{n})}$. For $n=2$ and $n=3$ we have the well known formulas $f(x)=\sum_{n=0}^{\infty}[\frac{n+2}{2}]x^{n}$ and $f(x)=\sum_{n=0}^{\infty}\lceil \frac{(n+3)^2}{12} \rfloor x^n$ (see for example p.10 of \cite{Hardy}) respectively.
\end{proof}

\subsection{The $n$-cyclic case}

By Proposition \ref{CyclicCharacterizationProp}, the set $LPL^{\infty}(n,e,q)$ contain a cyclic code if and only if $t^{n-1}| 2e+1$. If this condition is satisfied we say that $(e,q)$ is an $n$-cyclic pair. In this case we can also obtain a characterization of the admissible structures.


\begin{theorem}\label{AdmStrCyclicCaseTh}
Let $(e,q)$ be an $n$-cyclic pair where $q=(2e+1)t$ and $d_1, d_2, \ldots ,d_n$ be  positive integers verifying $d_1|\ldots|d_n$ and $d_1\cdots d_n=t^n$ then there exists $C \in LPL^{\infty}(n,e,q)$ such that $C \simeq \Z_{d_1}\times \ldots\times \Z_{d_n}$.
\end{theorem}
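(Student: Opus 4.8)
The plan is to induct on $n$, building an $n$-dimensional code from an $(n-1)$-dimensional one by the linear construction (Proposition \ref{LinearConstrProp}) and tracking isomorphism types through generator matrices and their Smith normal forms. The base case $n=1$ is trivial: the only chain is $(t)$ and $LPL^{\infty}(1,e,q)=\{(2e+1)\Z_q\}\simeq\Z_t$. For $n\ge 2$, comparing $p$-adic valuations in $d_1\cdots d_n=t^n$ shows at once that $d_1\mid t$ and $t\mid d_n$. Hence $\frac{d_1d_n}{t}$ is a positive integer, and writing the finite abelian group $\Z_{d_1d_n/t}\times\Z_{d_2}\times\cdots\times\Z_{d_{n-1}}$ in invariant‑factor form $\Z_{\delta_1}\times\cdots\times\Z_{\delta_{n-1}}$ one has $\delta_1\cdots\delta_{n-1}=t^{n-1}$; since an $n$-cyclic pair is also $(n-1)$-cyclic, the induction hypothesis furnishes a code $C\in LPL^{\infty}(n-1,e,q)$ with $C\simeq\Z_{\delta_1}\times\cdots\times\Z_{\delta_{n-1}}$.

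The step that makes the $n$-cyclic hypothesis do its work is the claim: if $t^{k}\mid 2e+1$ then every $D\in LPL^{\infty}(k,e,q)$ satisfies $\Lambda_D\subseteq t\Z^{k}$, equivalently $D\subseteq t\Z_q^{k}$. To prove it, take an $(e,q)$-perfect generator matrix $M$ for (an ordered representative of) $D$; its diagonal entries equal $2e+1$ and hence are divisible by $t$, while for the entries above the diagonal condition (3) of Proposition \ref{PerfCondProp} together with $t(2e+1)=q$ gives, by induction on $j-i$, that $\tfrac{2e+1}{t^{\,j-i}}\mid M_{ij}$, and since $j-i\le k-1$ and $t^{k}\mid 2e+1$ this forces $t\mid M_{ij}$. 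Applying this with $k=n-1$ to the code $C$ above: any generator matrix $M$ of $C$ has all entries divisible by $t$, so for $z\in\Z^{n-1}$ the vector $\tfrac1t\,zM$ is integral, and reducing it modulo $q$ gives an $x\in t^{-1}C$ with $txM^{-1}=z$; thus, as $x$ ranges over $t^{-1}C$, $txM^{-1}$ ranges over all of $\Z^{n-1}$. Consequently, for every $a\in\Z^{n-1}$, Proposition \ref{LinearConstrProp} produces $\widetilde C=C\times\{0\}+(x,2e+1)\Z\in LPL^{\infty}(n,e,q)$ with generator matrix $\widetilde M=\left(\begin{smallmatrix}M&0\\x&2e+1\end{smallmatrix}\right)$, where $q\widetilde M^{-1}=\left(\begin{smallmatrix}qM^{-1}&0\\-a^{T}&t\end{smallmatrix}\right)$, and the isomorphism type of $\widetilde C$ is the one determined by the Smith normal form of this last matrix.

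It therefore suffices to choose $a$ so that $\left(\begin{smallmatrix}\operatorname{diag}(\delta_1,\dots,\delta_{n-1})&0\\-a^{T}&t\end{smallmatrix}\right)\simG\operatorname{diag}(d_1,\dots,d_n)$ (recall $qM^{-1}$ has Smith form $\operatorname{diag}(\delta_i)$, and putting it into that form merely changes $a$ unimodularly). This is the mechanism of Lemma \ref{TriangularFormLemma}, run with $t$ in the corner instead of $2e+1$: with $\gamma=t/d_1$ and $\beta=d_n/t$ (integers by the divisibilities above), Lemma \ref{EquivGamma2DLemma} gives $\operatorname{diag}(d_1,d_n)\simG\left(\begin{smallmatrix}t&d_1\\0&d_1d_n/t\end{smallmatrix}\right)$; applying this to the $2\times 2$ block of $\operatorname{diag}(d_1,\dots,d_n)$ on the indices $\{1,n\}$ yields $\operatorname{diag}(d_1,\dots,d_n)\simG\left(\begin{smallmatrix}t&v^{T}\\0^{t}&\operatorname{diag}(d_1d_n/t,d_2,\dots,d_{n-1})\end{smallmatrix}\right)$ with $v=(d_1,0,\dots,0)$; bringing the lower‑right block to its Smith form $\operatorname{diag}(\delta_i)$ (which replaces $v$ by another integer vector), then transposing and reversing the index order, puts the matrix into the required shape $\left(\begin{smallmatrix}\operatorname{diag}(\delta_i)&0\\-a^{T}&t\end{smallmatrix}\right)$. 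With this $a$ one gets $\widetilde C\simeq\Z_{d_1}\times\cdots\times\Z_{d_n}$, closing the induction.

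The hard part is the divisibility claim $D\subseteq t\Z_q^{n-1}$: it is precisely where the hypothesis $t^{n-1}\mid 2e+1$ is indispensable (the statement is false for general $(e,q)$), and it is what guarantees that the linear construction has enough freedom in the parameter $x$ to realize every prescribed structure. Everything else — the valuation inequalities $d_1\mid t\mid d_n$, the choice of the auxiliary $(n-1)$-chain, and the matrix manipulations — is routine bookkeeping resting on Propositions \ref{LinearConstrProp}, \ref{PerfCondProp} and Lemmas \ref{EquivGamma2DLemma}, \ref{TriangularFormLemma}.
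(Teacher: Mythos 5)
Your proposal is correct, but it takes a genuinely different route from the paper's proof. The paper argues in one shot: it applies Lemma \ref{TriangularFormLemma} with $t$ in place of $2e+1$ to obtain $A\in\nabla_n(t)$ with $A\simG \mbox{diag}(d_1,\ldots,d_n)$, and then defines a generator matrix $M$ by the explicit recursion $M_n=(2e+1)e_n$, $M_i=(2e+1)e_i-\sum_{k>i}(a_{ik}/t)M_k$, so that $AM=qI$; the hypothesis $t^{n-1}\mid 2e+1$ enters only to show that $M$ is integral, after which $M$ is an $(e,q)$-perfect matrix with $qM^{-1}=A$, and the Smith normal form of $A$ gives the desired group structure immediately. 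You instead induct on the dimension via the linear construction (Proposition \ref{LinearConstrProp}), and the weight is carried by your auxiliary claim that $t^{k}\mid 2e+1$ forces every code in $LPL^{\infty}(k,e,q)$ to be contained in $t\Z_q^{k}$; this claim is not in the paper, and your proof of it (induction on $j-i$ using condition (3) of Proposition \ref{PerfCondProp}, giving $\frac{2e+1}{t^{j-i}}\mid M_{ij}$) checks out, and it is exactly what lets $z=txM^{-1}$ range over all of $\Z^{n-1}$ so that the bordered matrix with last row $(-a,t)$ can be given any prescribed Smith form via the same Lemma \ref{EquivGamma2DLemma}/\ref{TriangularFormLemma} mechanism run with $t$ in the corner. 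What each buys: the paper's construction is shorter and gives exact control ($qM^{-1}$ equals a prescribed triangular matrix), avoiding both your containment lemma and the bordered-matrix step; yours is longer but isolates a structural fact of independent interest (all $(n-1)$-dimensional perfect codes lie in $t\Z_q^{n-1}$ in the cyclic regime) and makes visible the freedom available in the linear construction. Two cosmetic points only: in your final reshaping, reversing the index order alone already places the vector in the last row (transposing as well would put it in the last column), but since transposition preserves the Smith normal form this is immaterial; likewise the reversed order of the $\delta_i$ on the diagonal is absorbed by a further permutation that acts unimodularly on $a$, which your "any $a$ is achievable" argument already covers.
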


\begin{proof}
By Lemma \ref{TriangularFormLemma} there exists an integer matrix $A=(a_{ij})_{1\leq i,j\leq n} \in \nabla_n(t)$ with $A \simG \mbox{diag}(d_1,\ldots,d_n)$. We define $M \in \nabla_n(2e+1,\mathbb{Q})$ recursively as following:
\begin{equation}\label{EqRecurrenceForM}
\left\{ \begin{array}{ll}
M_n= (2e+1)e_n &\\
M_{i}= (2e+1)e_i - \sum_{k=i+1}^n ({a_{ik}}/{t})  M_{k} & \textrm{for }1\leq i <n,
\end{array} \right.
\end{equation} 
where $M_i$ denote the $i$th row of $M$. Using $t^{n-1}\mid 2e+1$, it is not difficult to prove by induction that $M_{i}\in t^{i-1}\Z^{n}$ for $1\leq i \leq n$, which implies that the matrix $M$ has integer coefficient, hence $M \in \nabla_n(2e+1)$. Equation (\ref{EqRecurrenceForM}) can be written in matricial form as $AM=qI$, in particular $M \in \mathcal{P}_{n}(e,q)$ (that is, $M$ is $(e,q)$-perfect) and $qM^{-1}\in M_{n}(\Z)$. This last fact imply that $M$ is the generator matrix of a code $C \in LPL^{\infty}(n,e,q)$ whose group structure is given by the Smith normal form of $qM^{-1}=A$, that is $C \simeq \Z_{d_1}\times \ldots \Z_{d_n}$.
\end{proof}

We remark that in the $n$-cyclic case, by the structure theorem for finitely generated abelian groups, every abelian group $G$ of order $t^n$ is represented by a code $C \in LPL^{\infty}(n,e,q)$ (in the sense that $C \simeq G$).

\section{Concluding remarks}

In this paper we derive several results on $q$-ary perfect codes in the maximum metric or equivalently about tiling of the torus $\mathcal{T}_q^n$ (or $q$-periodic lattices) by cubes of odd length. A type of matrices (perfect matrices) which provide generator matrices with a special form for perfect codes is introduced. We describe isometry and isomorphism classes of two-dimensional perfect codes extending some results to maximal codes in general dimensions. Several constructions of perfect codes from codes of smaller dimension and via section are given. Through these constructions we extended results obtained for dimension two to arbitrary dimensions and interesting families of $n$-dimensional perfect codes are obtained (as those in Corollaries \ref{CyclicFamilyCor} and \ref{CyclicFamily2Cor}). A characterization of what group isomorphism classes can be represented by $(n,e,q)$-perfect codes is derived for the two-dimensional case, for the maximal case and for the cyclic case.\\


Potential further problems related to this work are interesting to investigate. The fact that every linear perfect code is standard (which is a consequence of Minkoski-Haj{\'o}s theorem) guarantees that the permutation associated to a code (Definition \ref{PermutationAssocDef}) is well defined. It is likely to be possible to extend some of our results to non-linear codes for which the permutation associated to the code is well defined. We also could study isometry classes of perfect non-linear codes (\cite{KP4,MS2} could be helpful). It should be interesting to obtain for higher dimensions a result analogous to the parametrization theorem (Theorem \ref{parametrizationTheorem}), that is, in such a way that isometry and isomorphism classes correspond with certain generalized cosets (Theorems \ref{GralParamTheoremP1} and \ref{GralParamTheoremP3} provide a partial answer for the maximal case). In \cite{Kolountzakis}, tilings by the notched cube and by the extended cube were considered, it may be possible to extended some of results obtained here for these more general shapes. Another remarkable fact is related to properties of the admissible structures (isomorphism classes that can be represented by a perfect code in the maximum metric). It is possible to define a natural poset structure in the set of isomorphism classes of abelian groups of order $t^n$ in such a way that the cyclic group $\Z_{t^n}$ and the cartesian group ${(\Z^{t})}^n$ correspond to the maximum and minimum element in this poset and the admissible structures form an ideal in this poset in specific situations (for example in the two-dimensional case and in the maximal and cyclic case for arbitrary dimensions). We wonder if this last assertion holds in general.

\end{document}